\documentclass{colt2015} 

\usepackage{algorithm,algorithmic}
\usepackage{amssymb, multirow, paralist, color}
\newtheorem{thm}{Theorem}
\newtheorem{prop}{Proposition}
\newtheorem{cor}[thm]{Corollary}
\newtheorem{ass}[thm]{Assumption}
\usepackage{enumitem}
\usepackage{booktabs}

\def \neon {NEON}
\def \neonp {NEON$^+$}
\def \S {\mathbf{S}}
\def \A {\mathcal{A}}

\def \R {\mathbb{R}}

\def \w {\mathbf{w}}
\def \w {\mathbf{z}}
\def \v {\mathbf{v}}

\def \x {\mathbf{x}}

\def \E {\mathrm{E}}

\def \x {\mathbf{x}}
\def \p {\mathbf{p}}
\def \a {\mathbf{a}}

\def \e {\mathbf{e}}

\def \z {\mathbf{z}}

\def \y {\mathbf{y}}
\def \u {\mathbf{u}}

\def \g {\mathbf{g}}

\def \xh {\widehat{\x}}

\def \y {\mathbf{y}}
\def \E {\mathrm{E}}
\def \x {\mathbf{x}}
\def \g {\mathbf{g}}

\def \z {\mathbf{z}}
\def \u {\mathbf{u}}

\def \w {\mathbf{w}}

\def \R {\mathbb{R}}
\def \S {\mathcal{S}}

\def \A {\mathcal{A}}

\def \v {\mathbf{v}}

\def \p {\mathbf{p}}

\def \a {\mathbf{a}}

\def \xh {\widehat{\x}}

\def \u {\mathbf{u}}
\def \uh {\widehat{\u}}

\begin{document}

\title[Extracting Negative Curvature From Noise]{First-order Stochastic Algorithms for Escaping From Saddle Points in Almost Linear Time}
 \author{\Name{Yi Xu}$^\natural$\Email{yi-xu@uiowa.edu}\\
 \Name{Rong Jin}$^\dagger$ \Email{jinrong.jr@ailibaba-inc.com}\\
\Name{Tianbao Yang}$^\natural$\Email{tianbao-yang@uiowa.edu}\\
\addr$^\natural$ Department of Computer Science, The University of Iowa, Iowa City, IA 52242 \\
\addr$^\dagger$Alibaba Group, Bellevue, WA 98004
}

\maketitle
\vspace*{-0.5in}
\begin{center}{First version: November 03, 2017}\end{center}
\vspace*{0.2in}

\begin{abstract}
In this paper, we consider first-order methods for solving stochastic non-convex optimization problems. To escape from saddle points, we propose first-order procedures to extract negative curvature from the Hessian matrix through a principled sequence starting from noise, which are referred to {\bf NEgative-curvature-Originated-from-Noise or NEON} and of independent interest. 
The proposed procedures enable one to design purely first-order stochastic algorithms for escaping from non-degenerate saddle points with a much better time complexity (almost linear time in  the problem's dimensionality). In particular, we develop a general framework of {\bf first-order stochastic algorithms} with a second-order convergence guarantee based on our new technique and existing algorithms that may only converge to a first-order stationary point. For finding a nearly {\bf second-order stationary point} $\x$ such that $\|\nabla F(\x)\|\leq \epsilon$ and $\nabla^2 F(\x)\geq -\sqrt{\epsilon}I$ (in high probability), the best time complexity of the presented algorithms is $\widetilde O(d/\epsilon^{3.5})$, where $F(\cdot)$ denotes the objective function and $d$ is the dimensionality of the problem. To the best of our knowledge, this is the first theoretical result of first-order stochastic algorithms with an almost linear time in terms of problem's dimensionality for finding second-order stationary points, which is  even competitive with  existing stochastic algorithms hinging on the second-order information.
\end{abstract}

\section{Introduction}
The problem of interest in this paper is given by 
\begin{align}\label{eqn:opt}
\min_{\x\in\R^d}F(\x)\triangleq \E_{\xi}[f(\x; \xi)]
\end{align}
where $\xi$ is a random variable and $f(\x; \xi)$ is a random smooth non-convex function of $\x$.  It is notable that our developments are also  applicable to a finite-sum problem with a very large number of components: 
\begin{align}\label{eqn:opt2}
\vspace*{-0.05in}
\min_{\x\in\R^d}\frac{1}{n}\sum_{i=1}^nf(\x; \xi_i)
\vspace*{-0.05in}
\end{align}
The above formulations play an important role for solving non-convex problems in machine learning, e.g.,  deep learning~\citep{goodfellow2016deep}.

A popular choice of algorithms for solving~(\ref{eqn:opt}) is (mini-batch) stochastic gradient descent (SGD) method and its variants~\citep{DBLP:journals/siamjo/GhadimiL13a}. However, these algorithms do  not necessarily  guarantee to escape from a saddle point  (more precisely a non-degenerate saddle point) $\x$ satisfying that: $\nabla F(\x) = 0$ and  the  minimum eigen-value of $\nabla^2 F(\x))$ is less than 0. Recently, new variants of SGD by adding isotropic noise into the stochastic gradient use the following update for escaping from a saddle point:
\begin{align}\label{eqn:noise}
\vspace*{-0.05in}
\x_{t+1} = \x_t - \eta_t(\nabla f(\x_t; \xi_t) + n_t),
\vspace*{-0.05in}
\end{align}
where $n_t\in\R^d$ is an isotropic noise vector either sampled from the sphere of a Euclidean ball~\citep{pmlr-v40-Ge15} (the corresponding variant is referred to as noisy-SGD) or a Gaussian distribution~\citep{DBLP:conf/colt/ZhangLC17} (the corresponding variant is referred to as stochastic gradient Langevin dynamics (SGLD)). These two works provide rigorous analyses of the noise-injected update~(\ref{eqn:noise}) for escaping from a saddle point. Unfortunately, both variants suffer from a polynomial time complexity with a super-linear dependence on the dimensionality $d$ (at least a power of $4$), which renders them not practical for optimizing deep neural networks with millions of parameters. 

\begin{table*}[t]
		\caption{Comparison with existing stochastic algorithms for achieving an $(\epsilon, \gamma)$-SSP to~(\ref{eqn:opt}), where $p$ is a number at least $4$, IFO (incremental first-order oracle) and ISO (incremental second-order oracle) are terminologies borrowed from~\citep{reddi2017generic}, representing $\nabla f(\x; \xi)$ and $\nabla^2 f(\x; \xi)\v$ respectively,  $T_h$ denotes the runtime of ISO and $T_g$ denotes the runtime of IFO. In this paper, $\widetilde O(\cdot)$ hides a poly-logarithmic factor. SM refers to stochastic momentum methods.  For $\gamma$, we only consider as lower as $\epsilon^{1/2}$. 
		 }
		\centering
		\label{tab:2}
		\scalebox{0.74}{\begin{tabular}{lllll}
			\toprule
			algorithm & oracle & convergence in &time complexity\\
			&& expectation or high probability&\\
			\midrule
			Noisy SGD~\citep{pmlr-v40-Ge15} &IFO&$(\epsilon, \epsilon^{1/4})$-SSP, high probability&$\widetilde O\left(T_gd^p\epsilon^{-4}\right)$\\ 
			SGLD~\citep{DBLP:conf/colt/ZhangLC17} &IFO&$(\epsilon, \epsilon^{1/2})$-SSP, high probability&$\widetilde O\left(T_gd^p\epsilon^{-4}\right)$\\ 
	\midrule
			Natasha2~\citep{natasha2} &IFO  + ISO&$(\epsilon, \epsilon^{1/2})$-SSP, expectation&$\widetilde O\left( T_g\epsilon^{-3.5}+T_h\epsilon^{-2.5} \right)$\\ 
			Natasha2~\citep{natasha2} &IFO  + ISO&$(\epsilon, \epsilon^{1/4})$-SSP, expectation&$\widetilde O\left( T_g\epsilon^{-3.25}+T_h\epsilon^{-1.75} \right)$\\ 
            		SNCG~\citep{DBLP:journals/corr/SNCG}&IFO + ISO&$(\epsilon, \epsilon^{1/2})$-SSP, high probability&$\widetilde O\left(T_g\epsilon^{-4} + T_h\epsilon^{-2.5}\right)$\\
                        	SVRG-Hessian~\citep{reddi2017generic}&IFO + ISO&$(\epsilon, \epsilon^{1/2})$-SSP, high probability&$\widetilde O\left(T_g(n^{2/3}\epsilon^{-2} + n\epsilon^{-1.5}) \right.$\\
	& & & ~~~~$\left.+ T_h(n\epsilon^{-1.5} + n^{3/4}\epsilon^{-7/4})\right)$\\

            \midrule
			NEON-SGD, NEON-SM ({\bf this work})&IFO &$(\epsilon, \epsilon^{1/2})$-SSP, high probability&$\widetilde O\left(T_g\epsilon^{-4}\right)$\\
			NEON-SCSG ({\bf this work})&IFO &$(\epsilon, \epsilon^{1/2})$-SSP, high probability&$\widetilde O\left(T_g\epsilon^{-3.5}\right)$\\
                         NEON-SCSG ({\bf this work})&IFO &$(\epsilon, \epsilon^{4/9})$-SSP, high probability&$\widetilde O\left(T_g\epsilon^{-3.33}\right)$\\
  			NEON-Natasha ({\bf this work})&IFO &$(\epsilon, \epsilon^{1/2})$-SSP, expectation&$\widetilde O\left(T_g\epsilon^{-3.5}\right)$\\
                        NEON-Natasha ({\bf this work})&IFO &$(\epsilon, \epsilon^{1/4})$-SSP, expectation&$\widetilde O\left(T_g\epsilon^{-3.25}\right)$\\
                        	NEON-SVRG ({\bf this work})&IFO &$(\epsilon, \epsilon^{1/2})$-SSP, high probability&$\widetilde O\left(T_g\left(n^{2/3}\epsilon^{-2} + n\epsilon^{-1.5} + \epsilon^{-2.75}\right)\right)$\\

			\bottomrule
		\end{tabular}}
	\end{table*}
	
On the other hand, second-order information carried by the Hessian has been utilized to escape from a saddle point, which usually yields an almost linear time complexity in terms of the dimensionality $d$ under the assumption  that the Hessian-vector product can be performed in a linear time. There have emerged a bulk of studies for non-convex optimization by utilizing the second-order information through Hessian or Hessian-vector products~\citep{nesterov2006cubic,DBLP:conf/stoc/AgarwalZBHM17,DBLP:journals/corr/CarmonDHS16,peng16inexacthessian,Cartis2011,Cartis2011b,clement17,DBLP:journals/corr/SNCG,DBLP:journals/corr/noisynegative}, though many of them focused on solving deterministic non-convex optimization with few exceptions~\citep{natasha2,DBLP:conf/icml/KohlerL17,DBLP:journals/corr/SNCG,reddi2017generic}. Although in practice, the Hessian-vector product can be estimated by a finite difference approximation using two gradient evaluations, the rigorous analysis of algorithms using such noisy approximation for calculating the Hessian-vector product in non-convex optimization remains unsolved, and heuristic approaches may suffer from numerical issues. 

So far, the developments of first-order algorithms and second-order algorithms for escaping from saddle points are largely disconnected. The existing analysis for first-order algorithms for escaping from saddle points based on adding isotropic noise is quite involved, and the underlying mechanism of using noise to escape from saddle points is not made explicit in the existing studies, which hinder further developments and improvements for stochastic non-convex optimization. In this paper, we will provide a novel perspective of adding noise into the first-order information by treating it as a tool for extracting negative curvature from the Hessian. To provide a formal analysis, we present  simple first-order procedures (NEON) inspired by the perturbed gradient method proposed in~\citep{jin2017escape} and its connection with the power method~\citep{doi:10.1137/0613066} for computing the largest eigen-vector of a matrix starting from a random noise vector. 
Our key results  show that the proposed NEON procedures can extract a negative curvature direction of the Hessian, which itself is stronger than just showing the objective value decrease around saddle points in existing studies~\citep{jin2017escape,pmlr-v40-Ge15,DBLP:journals/corr/Levy16a}. 
Our main contributions are: 
\begin{itemize}
\item We provide a novel perspective of adding noise into the first-order information by treating it as a tool for extracting negative curvature from the Hessian,  thus connect the existing two classes of methods for escaping from saddle points. We provide a formal analysis of simple procedures based on gradient descent and accelerated gradient method for exacting a negative curvature direction  from the Hessian.
\item We develop a general framework of first-order  algorithms for stochastic non-convex optimization by combining  the proposed first-order NEON procedures to extract negative curvature with existing first-order stochastic algorithms that aim at a first-order critical point. We also establish the time complexities of several interesting instances of  our general framework for finding a nearly $(\epsilon, \gamma)$-second-order stationary point (SSP), i.e., 
$\|\nabla F(\x)\|\leq \epsilon$, and $\lambda_{\min}(\nabla^2 F(\x))\geq -\gamma$,
where $\|\cdot\|$ represents Euclidean norm of a vector and $\lambda_{\min}(\cdot)$ denotes the minimum eigen-value. 
\end{itemize}

To the best of our knowledge, the developed stochastic first-order algorithms  are the first with an almost linear time complexity in terms of the problem's dimensionality for finding an SSP of a stochastic non-convex optimization problem. In addition, the dependence on the prescribed first-order optimality  level $\epsilon$ (i.e., $\|\nabla F(\x)\|\leq \epsilon$) is competitive and even better than standard SGD and their noisy variants~\citep{pmlr-v40-Ge15,DBLP:conf/colt/ZhangLC17}, and also competitive with existing second-order stochastic algorithms. A summary of our results and existing results is presented in Table~\ref{tab:2}.

\section{Other Related Work}

SGD and its many variants (e.g., mini-batch SGD and stochastic momentum (SM) methods) have been analyzed for stochastic non-convex optimization~\citep{DBLP:journals/siamjo/GhadimiL13a,DBLP:journals/mp/GhadimiL16,DBLP:journals/mp/GhadimiLZ16,yangnonconvexmo}.
The iteration complexities of all these algorithms is $O(1/\epsilon^4)$ for finding a first-order stationary point (FSP) (in expectation $\E[\|\nabla f(\x)\|^2_2]\leq \epsilon^2$  or in high probability).   Recently, there are some improvements for stochastic non-convex optimization. \citet{DBLP:journals/corr/LeiJCJ17} proposed a first-order stochastic algorithm (named SCSG) using the variance-reduction technique, which enjoys an iteration complexity of $O(1/\epsilon^{-10/3})$ for finding an FSP (in expectation), i.e., $\E[\|\nabla f(\x)\|^2_2]\leq \epsilon^2$. \citet{natasha2} proposed a variant of SCSG (named Natasha1.5) with the same convergence and complexity. 
An important application of NEON is that  previous stochastic algorithms that have a first-order convergence guarantee can be strengthened to enjoy a second-order convergence guarantee by leveraging the proposed first-order NEON procedures to escape from saddle points. We will analyze several  algorithms by combining the updates of SGD, SM, mini-batch SGD and SCSG with the proposed NEON. 
 
Several recent works~\citep{DBLP:journals/corr/SNCG,natasha2,reddi2017generic} propose to strengthen existing first-order stochastic algorithms to have  second-order convergence guarantee by leveraging the second-order information. \citet{DBLP:journals/corr/SNCG} used  mini-batch SGD, \citet{reddi2017generic} used SVRG for a finite-sum problem, and \citet{natasha2} used a similar algorithm to SCSG for their first-order algorithms. The second-order methods used in these studies for computing negative curvature can be replaced by the proposed NEON procedures. It is notable although  a generic approach for stochastic non-convex optimization was proposed in \citep{reddi2017generic},  its requirement on the first-order stochastic algorithms precludes many interesting algorithms such as SGD, SM, and SCSG. 
Stronger convergence guarantee (e.g., converging to a global minimum) of stochastic algorithms has been studied in~\citep{DBLP:conf/icml/HazanLS16} for a certain family of problems, which is beyond the setting of the present work. 

We note that the proposed NEON can be considered as a noisy Power method applied to the Hessian matrix or an augmented matrix, where the Hessian-vector product is estimated  by a finite difference approximation using only the first-order information. However, we cannot use existing analysis of noisy power method~\citep{DBLP:conf/colt/BalcanDWY16,DBLP:conf/nips/HardtP14} to analyze the proposed NEON because that all existing analysis focus on gap-dependent complexity. Although~\citet{DBLP:conf/colt/BalcanDWY16} have established  a gap-independent complexity, it is not strong enough for analyzing the proposed NEON due to its strong requirement on the noise matrix. To address this challenge, we are enlightened by the connection between the perturbed gradient method proposed in~\citep{jin2017escape} and the power method, and we follow a similar route in~\citep{jin2017escape} but with some simplification and customization  to achieve our goal. 
 

\setlength{\belowdisplayskip}{6pt} \setlength{\belowdisplayshortskip}{2pt}
\setlength{\abovedisplayskip}{6pt} \setlength{\abovedisplayshortskip}{2pt}
\section{Preliminaries}
In this section, we present some notations and preliminaries. Let $\|\cdot\|$ denote the Euclidean norm of a vector and $\|\cdot\|_2$ denote the spectral norm of a matrix. 
A function $f(\x)$ has a Lipschitz continuous gradient if it is differentiable and for any $\x, \y\in\R^d$, there exists $L_1>0$ such that 
$|f(\x) - f(\y) - \nabla f(\y)^{\top}(\x - \y)|\leq \frac{L_1}{2}\|\x - \y\|^2$.
A function $f(\x)$ has a Lipschitz continuous Hessian if it is twice differentiable and for any $\x, \y\in\R^d$, there exists $L_2>0$ such that 
\begin{align}
|f(\x) - f(\y) - \nabla f(\y)^{\top}(\x - \y) - \frac{1}{2}(\x - \y)^{\top}\nabla^2 f(\y)(\x - \y)|\leq \frac{L_2}{6}\|\x - \y\|^3.
\end{align}
It also implies that 
\begin{align}\label{eqn:HessianC}
\|\nabla f(\x + \u) - \nabla f(\x) - \nabla^2 f(\x)\u\|\leq \frac{L_2\|\u\|^2}{2}.
\end{align}
We first make the following assumptions regarding the stochastic non-convex optimization problem~(\ref{eqn:opt}). 
\begin{ass}\label{ass:1}For the  problem~(\ref{eqn:opt}), we assume that 
\begin{itemize} 
\item[(i).] every random function $f(\x; \xi)$ is twice differentiable, and it has Lipschitz continuous gradient, i.e., there exists $L_1>0$ such that  $\|\nabla f(\x; \xi) - \nabla f(\y; \xi)\|\leq L_1\|\x - \y\|$, and   has a Lipschitz continuous Hessian, i.e.,  there exists $L_2>0$ such that $\|\nabla^2 f(\x; \xi) - \nabla^2 f(\y; \xi)\|_2\leq L_2\|\x - \y\|$.
\item[(ii).]  given an initial point $\x_0$, there exists $\Delta<\infty$ such that $F(\x_0) - F(\x_*)\leq \Delta$, where $\x_*$ denotes the global minimum of~(\ref{eqn:opt}).
\item[(iii).] (optional) there exists $G>0$ such that $\mathbb{E}[\exp(\|\nabla f(\x; \xi) - \nabla F(\x)\|^2/G^2)]\leq \exp(1)$ holds. 
\item[(iv).] (optional) there exists $V>0$ such that $\mathbb{E}[\|\nabla f(\x; \xi) - \nabla F(\x)\|^2]\leq V$ holds. 
\end{itemize}
\end{ass}
It is notable that the first two assumptions are standard assumptions for non-convex optimization in order to establish second-order convergence,  the third assumption is necessary for high probability analysis, and the last assumption is needed for expectational analysis. 

Next, we discuss a second-order method based on Hessian-vector products to escape from a non-degenerate saddle point $\x$ of a function $f(\x)$ that satisfies  
\begin{align}\label{eqn:saddle}
 \|\nabla f(\x)\|\leq \epsilon, \quad \lambda_{\min}(\nabla^2 f(\x))\leq -\gamma,
\end{align}
which can be found in many previous studies~\citep{clement17,DBLP:journals/corr/noisynegative,DBLP:journals/corr/CarmonDHS16}.
The method is based on a negative curvature ({\bf NC for short is used in the sequel}) direction $\v\in\R^d$ that satisfies $\|\v\|=1$ and 
\begin{align}\label{eqn:nc}
\v^{\top}\nabla^2 f(\x)\v\leq - c\gamma
\end{align}
where $c>0$ is a constant. Given such a vector $\v$, we can update the solution according to 
\begin{align}\label{eqn:ncu}
\x_+ = \x - \frac{c\gamma}{L_2}\text{sign}(\v^\top\nabla f(\x))\v,
\end{align}
or
\begin{align}\label{eqn:ncu2}
\x'_+ = \x - \frac{c\gamma}{L_2}\bar\xi\v,
\end{align}
if $\nabla f(\x)$ is not available, where $\bar\xi\in\{1, -1\}$ is a Rademacher random variable. The following lemma establishes that the objective value of $\x_+$ or $\x'_+$ is less than that of $\x$ by a sufficient amount, which makes it possible to escape from the saddle point $\x$.
\begin{lemma}\label{lemma:NCD:onestep}
For $\x$ and $\v$ satisfying~(\ref{eqn:saddle}) and~(\ref{eqn:nc}), let $\x_+$, $\x'_+$ be given in~(\ref{eqn:ncu}) and~(\ref{eqn:ncu2}), then we have
\begin{align*}
    &f(\x)- f(\x_+)\geq  \frac{c^3\gamma^3}{3L_2^2}, \quad\E[f(\x)- f(\x'_+)]\geq  \frac{c^3\gamma^3}{3L_2^2}.
\end{align*}
\end{lemma}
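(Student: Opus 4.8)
The plan is to invoke the cubic Taylor-type upper bound implied by the Lipschitz continuity of the Hessian, and then exploit three facts: the sign choice (resp.\ the Rademacher randomization) annihilates the first-order term, the negative-curvature condition~(\ref{eqn:nc}) makes the second-order term strictly negative, and the prescribed step length $c\gamma/L_2$ is small enough that the cubic remainder cannot wipe out the second-order gain. The gradient bound $\|\nabla f(\x)\|\leq\epsilon$ in~(\ref{eqn:saddle}) will in fact not be needed; the decrease is driven entirely by the curvature.

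First I would record that, for any unit vector $\s$ and any $\eta>0$, applying the cubic bound from the Lipschitz-Hessian condition with $\y=\x+\eta\s$ gives
\begin{align*}
f(\x+\eta\s)\leq f(\x)+\eta\,\nabla f(\x)^{\top}\s+\frac{\eta^2}{2}\,\s^{\top}\nabla^2 f(\x)\s+\frac{L_2\eta^3}{6}.
\end{align*}
For $\x_+$ as in~(\ref{eqn:ncu}), take $\eta=c\gamma/L_2$ and $\s=-\text{sign}(\v^{\top}\nabla f(\x))\v$, so $\|\s\|=1$. Then the linear term is $\eta\,\nabla f(\x)^{\top}\s=-\eta\,|\v^{\top}\nabla f(\x)|\leq 0$, while the quadratic term is $\frac{\eta^2}{2}\s^{\top}\nabla^2 f(\x)\s=\frac{\eta^2}{2}\v^{\top}\nabla^2 f(\x)\v\leq-\frac{c\gamma\eta^2}{2}$ by~(\ref{eqn:nc}). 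Substituting $\eta=c\gamma/L_2$ yields
\begin{align*}
f(\x)-f(\x_+)\;\geq\;\frac{c\gamma\,\eta^2}{2}-\frac{L_2\eta^3}{6}\;=\;\frac{c^3\gamma^3}{2L_2^2}-\frac{c^3\gamma^3}{6L_2^2}\;=\;\frac{c^3\gamma^3}{3L_2^2}.
\end{align*}

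For $\x'_+$ as in~(\ref{eqn:ncu2}), apply the same expansion with $\s=-\bar\xi\v$, using $\|\s\|=1$ and $\bar\xi^2=1$: the quadratic term is again $\frac{\eta^2}{2}\v^{\top}\nabla^2 f(\x)\v\leq-\frac{c\gamma\eta^2}{2}$ deterministically, and the linear term equals $-\eta\,\bar\xi\,\v^{\top}\nabla f(\x)$, which has zero mean since $\bar\xi$ is a Rademacher variable. Taking expectation over $\bar\xi$ deletes the linear term and reproduces the previous computation verbatim, giving $\E[f(\x)-f(\x'_+)]\geq c^3\gamma^3/(3L_2^2)$.

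I do not anticipate a genuine obstacle here; the computation is short once the expansion is set up. The only points that require care are getting the sign of the linear term right (so that the $\text{sign}(\cdot)$ device, resp.\ the Rademacher average, renders it $\leq 0$ rather than $\geq 0$) and keeping track of which quantities are deterministic versus averaged in the $\x'_+$ case. One could alternatively optimize over the step size to replace the constant $1/3$ by $2/3$, but the step size fixed in~(\ref{eqn:ncu})--(\ref{eqn:ncu2}) already suffices for the claimed bound.
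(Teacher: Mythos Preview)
Your proposal is correct and follows essentially the same route as the paper: both invoke the cubic Taylor bound from the Lipschitz Hessian, use the sign choice (resp.\ the Rademacher average) to make the linear term nonpositive (resp.\ zero in expectation), apply the negative-curvature condition~(\ref{eqn:nc}) to the quadratic term, and plug in $\eta=c\gamma/L_2$ to obtain $\frac{c^3\gamma^3}{2L_2^2}-\frac{c^3\gamma^3}{6L_2^2}=\frac{c^3\gamma^3}{3L_2^2}$. The only cosmetic difference is that the paper folds the sign into the scalar step size while you fold it into the direction $\s$; the arithmetic is identical, and your observation that the bound $\|\nabla f(\x)\|\leq\epsilon$ is never used is also correct.
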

To compute a NC direction $\v$ that satisfies~(\ref{eqn:nc}), we can employ the Lanczos method or the Power method  for computing the maximum eigen-vector of the matrix $(I - \eta \nabla^2 f(\x))$, where $\eta L_1\leq 1$ such that $I - \eta\nabla^2 f(\x)\succeq 0$.  
The Power method starts with a random vector $\v_1\in\R^d$ (e.g., drawn from a uniform distribution over the unit
sphere) and iteratively compute 
\begin{align}\label{eqn:power}
\v_{\tau+1} = (I - \eta\nabla^2 f(\x))\v_{\tau}, \tau=1, \ldots, t
\end{align}

Following the results in~\citep{doi:10.1137/0613066}, it can be shown that if $\lambda_{\min}(\nabla^2 f(\x))\leq -\gamma$, then with at most $ \frac{\log(d/\delta^2)L_1}{\gamma}$ Hessian-vector products, the Power method~(\ref{eqn:power}) finds a vector $\hat\v_t = \v_t/\|\v_t\|$ such that $\hat\v_t^{\top}\nabla^2 f(\x)\hat\v_t\leq -\frac{\gamma}{2}$ holds with high probability $1-\delta$. Similarly, the Lanczos method (e.g., Lemma 11 in~\citep{clement17}) can find such a vector $\hat \v_t$ with a lower  number of Hessian-vector products, i.e., $ \min(d, \frac{\log(d/\delta^2)\sqrt{L_1}}{2\sqrt{2\varepsilon}})$.

In practice, Hessian-vector products may be approximated by using gradients. For example, \cite{conf/icml/Martens10,DBLP:journals/corr/CarmonDHS16} suggest to approximate a Hessian-vector by 
\begin{align*}
\nabla^2 f(\x) \v = \lim_{h\rightarrow 0}\frac{\nabla f(\x + h\v) - \nabla f(\x)}{h}.
\end{align*}
However, such a heuristic approach might be unstable when $h$ is very small. On the other hand, the convergence guarantee of the Power method and the Lanczos method with noise in approximating $\nabla^2 f(\x)\v$ is still under explored. 
\section{Extracting NC From Noise}
In this section, we will present and analyze  first-order procedures for computing a NC direction $\v$ that satisfies~(\ref{eqn:nc}) at any point $\x$ whose Hessian has a negative eigen-value less than $-\gamma$. The proposed procedures NEON in this section are applied to a twice differentiable  non-convex function $f(\x)$, whose gradient can be computed.  First, we propose a gradient descent based method for extracting NC, which achieves a similar {\it iteration complexity} to the Power method.  Second, we present an accelerated gradient method to extract the NC to match the iteration complexity of the Lanczos method. Finally, we discuss the application of these procedures for stochastic non-convex optimization using the mini-batching technique.   

The NEON is inspired by the perturbed gradient descent (PGD) method proposed in the seminal work~\citep{jin2017escape} and its connection with the Power method as discussed shortly. Around a {saddle point} $\x$ that satisfies~(\ref{eqn:saddle}), the PGD method first generates a random noise vector $\hat\e$ from the sphere of an Euclidean ball with a proper radius, then starts with a noise perturbed solution $\x_0 = \x+ \hat\e$, the PGD generates the following sequence of solutions: 
\begin{align}\label{eqn:noisyGD}
\x_\tau = \x_{\tau-1}  - \eta \nabla f(\x_{\tau-1}).
\end{align}
To establish a connection with the Power method~(\ref{eqn:power}) and motivate the proposed NEON, let us define another sequence of $\xh_\tau = \x_\tau - \x$. Then we have the following recurrence for $\xh_\tau$
\begin{align*}
\xh_\tau = \xh_{\tau-1}  - \eta \nabla f(\xh_{\tau-1}+\x), \quad\tau=1, \ldots, t
\end{align*}
It is clear that for $\tau=1, \ldots, t$,
\begin{align*}
\xh_\tau = \xh_{\tau-1}   - \eta\nabla f(\x)- \eta (\nabla f(\xh_{\tau-1}+\x) - \nabla f(\x)).
\end{align*}
To understand the above update, we adopt the following approximation: from~(\ref{eqn:saddle}) we know that $\nabla f(\x)\approx 0$, and from the Lipschitz continuous Hessian condition~(\ref{eqn:HessianC}), we can see that $\nabla f(\xh_{\tau-1}+\x) - \nabla f(\x)\approx \nabla^2 f(\x)\xh_{\tau-1}$ as long as $\|\xh_{\tau-1}\|$ is small, then for $\tau=1, \ldots, t$ 
\begin{align*}
\xh_\tau \approx \xh_{\tau-1}  - \eta \nabla^2 f(\x)\xh_{\tau-1} = (I - \eta \nabla^2 f(\x))\xh_{\tau-1}.
\end{align*}
It is obvious that the above approximated recurrence is close to the the sequence generated by the Power method with the same starting random vector $\hat\e = \v_1$. This intuitively explains that why the updated solution $\x_t = \x + \xh_t$ can decrease the objective value due to that $\xh_t$ is close to a NC of the Hessian $\nabla^2 f(\x)$. 

\setlength{\textfloatsep}{9pt}

\begin{algorithm}[t]
\caption{NEgative-curvature-Originated-from-Noise (NEON): NEON$(f, \x, t, \mathcal F, r)$}\label{alg:ncn}
\begin{algorithmic}[1]
\STATE \textbf{Input}:  $f, \x, t, \mathcal F, r$

\STATE Generate $\u_0$ randomly from the sphere of an Euclidean ball of radius $r$
\FOR{$\tau=0,\ldots, t$}
\STATE $\u_{\tau+1} = \u_\tau - \eta (\nabla f(\x+\u_{\tau}) - \nabla f(\x))$
\ENDFOR
\IF{$\min_{\|\u_{\tau}\|\leq U }\hat f_\x(\u_\tau) \leq -2.5\mathcal F$}
\STATE  let ${\tau'}= \arg\min_{\tau,\|\u_\tau\|\leq U}\hat f_\x(\u_\tau)$
\STATE {\bf return} $\u_{\tau'}$
\ELSE
\STATE {\bf return} $0$ 
\ENDIF

\end{algorithmic}
\end{algorithm}

To provide a formal analysis of this argument, we will first analyze the following recurrence for $\tau=1,\ldots$: 
\begin{align}\label{eqn:noisypower}
\u_{\tau} = \u_{\tau-1} - \eta (\nabla f(\x + \u_{\tau-1}) - \nabla f(\x)),
\end{align}
starting with  a random noise vector $\u_0$, which is drawn from the sphere of an Euclidean ball with a proper radius $r$. It is notable that the recurrence in~(\ref{eqn:noisypower}) is slightly different from that in~(\ref{eqn:noisyGD}). We emphasize that this simple change is useful  for extracting  the NC at any points whose Hessian has a negative eigen-value not just at non-degenerate saddle points, which can be used in some stochastic or deterministic algorithms~\citep{natasha2,DBLP:journals/corr/CarmonDHS16,clement17,DBLP:journals/corr/noisynegative}.  The proposed procedure NEON based on the above sequence for finding a NC direction of $\nabla^2 f(\x)$ is presented in Algorithm~\ref{alg:ncn}, where $\hat f_{\x}(\u)$ is defined in~(\ref{eqn:objn}). 
The following theorem states our key result for extracting the NC from the Hessian using noise initiated sequence~(\ref{eqn:noisypower}).
\begin{thm}\label{thm:main:GD}
For any $\gamma\in(0,1)$ and a sufficiently small $\delta\in(0,1)$, let $\x$ be a point such that $\lambda_{\min}(\nabla^2 f(\x))\leq -\gamma$. For any constant $\hat c\geq 18$, there  exists a constant $c_{\max}$ that depends on $\hat c$, such that  if NEON is called with $t = \hat c\frac{\log (dL_1 /(\gamma\delta))}{\eta \gamma}$,  $\mathcal F=\eta  \gamma^3 L_1L_2^{-2} \log^{-3}(dL_1 /(\gamma\delta))$, $r = \sqrt{\eta }\gamma^2L_1^{-1/2}L_2^{-1}\log^{-2}(dL_1 /(\gamma\delta))$, $U = 4\hat c(\sqrt{\eta L_1}\mathcal F/L_2)^{1/3}$ and a  constant $\eta\leq c_{\max}/L_1$, then  with high probability $1-\delta$ it returns a vector $\u$ such that 
$\frac{\u^{\top}\nabla^2 f(\x)\u}{\|\u\|^2}\leq -\frac{\gamma}{8\hat c^2 \log(dL_1 /(\gamma\delta))}\leq - \widetilde\Omega(\gamma)$. 
If NEON returns $\u\neq 0$, then the above inequality must hold; if NEON returns $0$, we can conclude that $\lambda_{\min}(\nabla^2 f(\x))\geq -\gamma$ with high probability $1- O(\delta)$. 
\end{thm}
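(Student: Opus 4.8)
The plan is to analyze the recurrence $\u_{\tau} = \u_{\tau-1} - \eta(\nabla f(\x + \u_{\tau-1}) - \nabla f(\x))$ by splitting the increment into its exact linear (power-method) part and a Hessian-Lipschitz remainder. Writing $H = \nabla^2 f(\x)$ and using~\eqref{eqn:HessianC}, one has $\u_{\tau} = (I - \eta H)\u_{\tau-1} + \eta\, \boldsymbol{\delta}_{\tau-1}$ with $\|\boldsymbol{\delta}_{\tau-1}\| \le \frac{L_2}{2}\|\u_{\tau-1}\|^2$. Unrolling gives $\u_\tau = (I-\eta H)^\tau \u_0 + \eta\sum_{s<\tau}(I-\eta H)^{\tau-1-s}\boldsymbol{\delta}_s$. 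Decompose along the eigenbasis of $H$; let $\mathcal{S}$ be the subspace spanned by eigenvectors with eigenvalue $\le -\gamma$. With the stepsize $\eta \le c_{\max}/L_1$, the factor $(1+\eta\gamma)$ on $\mathcal{S}$ dominates, while on the orthogonal complement the multipliers lie in $[-1,1]$ (or close to it). The first step is to show that, as long as all iterates stay within the ``good ball'' $\|\u_\tau\| \le U$, the $\mathcal{S}$-component of $\u_0$ — which by the random choice of $\u_0$ from the sphere of radius $r$ has norm at least $\widetilde\Omega(r/\sqrt{d})$ with probability $1-\delta$ — grows geometrically, so that after $t = \hat c \log(dL_1/(\gamma\delta))/(\eta\gamma)$ steps the component along $\mathcal{S}$ would exceed $U$. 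This is the standard ``either the iterate escapes the ball, or the dynamics are essentially the pure power method'' dichotomy, adapted from~\citep{jin2017escape}; the error terms $\boldsymbol{\delta}_s$ are controlled because $\|\u_s\|^2 \le U^2$ keeps them a lower-order perturbation of the exponentially growing signal, with the chosen $r, \mathcal F, U$ making the constants work out.

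Given this growth, I would argue by contradiction for the main quantitative claim. Suppose NEON returns $0$, i.e., $\min_{\|\u_\tau\|\le U}\hat f_\x(\u_\tau) > -2.5\mathcal F$ for all $\tau \le t$, where (from the forthcoming definition~\eqref{eqn:objn}) $\hat f_\x(\u)$ is the quadratic-plus-cubic surrogate $\nabla f(\x)^\top \u + \tfrac12 \u^\top H \u$ (or its model up to the $L_2\|\u\|^3/6$ Hessian-Lipschitz slack). If in addition $\lambda_{\min}(H) \le -\gamma$, the previous paragraph shows the iterate must leave the ball $\|\u_\tau\|\le U$ at some first time $\tau^*$; a short argument then shows that right before leaving, the surrogate value $\hat f_\x(\u_{\tau^*-1})$ has already dropped below $-2.5\mathcal F$ (the negative-curvature component has accumulated enough quadratic decrease, and $U$ is chosen precisely so that $\frac{L_2}{6}U^3$ and the positive-curvature contributions cannot cancel it). That contradicts the assumption, so whenever NEON returns $0$ we must have $\lambda_{\min}(H) \ge -\gamma$, up to the $O(\delta)$ failure probability of the random initialization. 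Conversely, if NEON returns $\u = \u_{\tau'} \ne 0$, then $\|\u\|\le U$ and $\hat f_\x(\u) \le -2.5\mathcal F$; I would convert this surrogate-value bound into a Rayleigh-quotient bound by writing $-2.5\mathcal F \ge \hat f_\x(\u) \ge \nabla f(\x)^\top\u + \tfrac12\u^\top H\u$, bounding $|\nabla f(\x)^\top\u| \le \|\nabla f(\x)\|\,U$ (or noting $\nabla f(\x)$ only enters the surrogate through a term we can absorb, since the construction is designed so the linear term is negligible relative to $\mathcal F$ at scale $U$), hence $\tfrac12\u^\top H\u \le -2\mathcal F$, and dividing by $\|\u\|^2 \le U^2$. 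Plugging in $\mathcal F = \eta\gamma^3 L_1 L_2^{-2}\log^{-3}(\cdot)$ and $U = 4\hat c(\sqrt{\eta L_1}\mathcal F/L_2)^{1/3}$, a direct computation yields $U^2 \asymp \hat c^2 \eta L_1 \gamma^2 L_2^{-2}\log^{-2}(\cdot) \cdot$ (constants), so $\u^\top H\u/\|\u\|^2 \le -4\mathcal F/U^2 \le -\gamma/(8\hat c^2\log(dL_1/(\gamma\delta)))$, which is the stated $-\widetilde\Omega(\gamma)$.

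The main obstacle I expect is the first step: making the ``escape-the-ball-or-power-method'' dichotomy fully rigorous with the specific, delicately balanced choices of $t, \mathcal F, r, U$. One has to track the coupled evolution of $\|\u_\tau\|$ (to ensure the Hessian-Lipschitz remainder stays controlled) and of the $\mathcal{S}$-projection (to ensure geometric growth), and the two bounds feed into each other; the radius $r$ must be small enough that the iterate does not leave the ball prematurely due to the initial perturbation but large enough (together with the $1/\sqrt{d}$ projection-onto-$\mathcal S$ lower bound from the random sphere) that the signal reaches scale $U$ within $t$ steps, and $\mathcal F$ must sit comfortably below the surrogate decrease achieved at scale $U$ while $\hat c \ge 18$ leaves slack for all the accumulated constants. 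This is exactly the kind of bookkeeping done in~\citep{jin2017escape}, and I would follow that route with the simplification that I only need a negative-curvature \emph{direction} (a Rayleigh-quotient certificate), not a full function-value decrease over the perturbed sequence, which lets me dispense with the ``improve or localize'' coupling between two sequences and argue about a single sequence $\{\u_\tau\}$. The other, more routine, obstacle is the high-probability lower bound $\|\Pi_{\mathcal S}\u_0\| \ge \widetilde\Omega(r/\sqrt d)$, which follows from standard anti-concentration for the projection of a uniform random unit vector onto a fixed subspace; and verifying that the linear term $\nabla f(\x)^\top\u$ really is negligible at scale $U$ under the regime of interest (or otherwise can be folded into the definition of $\hat f_\x$ without loss).
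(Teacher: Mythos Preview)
Your conversion from $\hat f_\x(\u)\le -2.5\mathcal F$, $\|\u\|\le U$ to the Rayleigh-quotient bound is correct and matches the paper. Note, though, that $\hat f_\x(\u)=f(\x+\u)-f(\x)-\nabla f(\x)^\top\u$ by definition~\eqref{eqn:objn}, so the linear term $\nabla f(\x)^\top\u$ is already subtracted out; your hedging about ``absorbing'' it is unnecessary.

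The substantive gap is your claim that the two-sequence coupling can be avoided. A single-sequence argument of the form ``if $\|\u_s\|\le U$ for all $s\le t$ then the $\e_1$-component of $\u_t$ exceeds $U$'' requires that the accumulated Hessian-Lipschitz remainders along $\e_1$ be dominated by the signal $(1+\eta|\lambda_1|)^\tau|\e_1^\top\u_0|$. With $\|\boldsymbol\delta_s\|\le\frac{L_2}{2}U^2$ and the geometric sum bounded by $(1+\eta|\lambda_1|)^\tau/(\eta|\lambda_1|)$, the error along $\e_1$ is at most $(1+\eta|\lambda_1|)^\tau\cdot L_2U^2/(2\gamma)$, whereas the signal is $(1+\eta|\lambda_1|)^\tau\cdot|\e_1^\top\u_0|$ with $|\e_1^\top\u_0|\gtrsim r\delta/\sqrt d$ from anti-concentration. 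Plugging in the specified $r$ and $U$ shows that ``signal $\ge$ error'' forces $\eta L_1\lesssim \delta^2/(d\kappa^2)$, i.e.\ $c_{\max}$ would have to depend on $d,\delta,\kappa$ --- contradicting the theorem, which asserts $c_{\max}$ depends only on $\hat c$. The nonlinear errors \emph{can} swamp the tiny initial $\e_1$-component for an adversarial (measure-zero-complement is not enough) set of $\u_0$.

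This is exactly why the paper (following~\citep{jin2017escape}) keeps the two-sequence coupling: it runs a second sequence $\w_\tau$ from $\w_0=\u_0+\mu r\e_1$ and tracks the \emph{difference} $\v_\tau=\w_\tau-\u_\tau$. The key is that $\v_0$ has magnitude $\Theta(r)$ (not $r\delta/\sqrt d$) and lies purely along $\e_1$; the nonlinear error in the $\v_\tau$ recursion is $O(L_2(\|\u_\tau\|+\|\v_\tau\|))\|\v_\tau\|$, which is now comparable in scale and controllable with a dimension-free $c_{\max}$. The conclusion is that at least one of $\{\u_\tau\},\{\w_\tau\}$ escapes, so the ``stuck'' set of initial points is a slab of width $O(r\delta/\sqrt d)$ in the $\e_1$ direction and hence has probability $\le\delta$. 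The paper does simplify Jin et al.'s confinement lemma (its Lemma~\ref{lemma:key1n} uses a clean descent argument on $\hat f_\x$ rather than an eigenspace decomposition), but the escape lemma (Lemma~\ref{lemma:key2n}) retains the coupling, and you cannot dispense with it.
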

{\bf Remark:} 
The above lemma shows that at any point $\x$ whose Hessian has a negative eigen-value (including non-degenerate saddle points), NEON can find a NC of $\nabla^2 f(\x)$ with high probability.

\subsection{Finding NC by Accelerated Gradient Method}
Although NEON provides a similar guarantee for extracting a NC as that provided by the Power method, but its iteration complexity $O(1/\gamma)$ is worse than that of the Lanczos method, i.e., $O(1/\sqrt{\gamma})$. In this subsection, we present a result matches $O(1/\sqrt{\gamma})$ of the Lanczos method. 

Let us recall the sequence~(\ref{eqn:noisypower}), which is essentially  an application of gradient descent (GD) method to the following objective function:
\begin{align}\label{eqn:objn}
\hat f_\x(\u) = f(\x+\u) - f(\x) - \nabla f(\x)^{\top}\u. 
\end{align}
In the sequel, we write $\hat f_\x(\u) = \hat f(\u)$, where the dependent $\x$ should be clear from the context. 
By the Lipschitz continuous Hessian condition, we have that
\begin{align*}
\frac{1}{2}\u^{\top}\nabla^2 f(\x)\u - \frac{L_3}{6}\|\u\|^3 \leq \hat f(\u).
\end{align*}
It implies that if $\hat f(\u)$ is sufficiently less than zero and $\|\u\|$ is not too large, then $\frac{\u^{\top}\nabla^2 f(\x)\u}{\|\u\|^2}$ will be sufficiently less than zero.


A natural question to ask is whether the convergence of GD update of NEON can be accelerated by accelerated gradient (AG) methods. 
It is well-known from convex optimization literature that AG methods can accelerate the convergence of GD method for smooth problems. Recently, several studies have explored AG methods for non-convex optimization~\citep{Li:2015:APG:2969239.2969282,corrACGWright,DBLP:conf/icml/CarmonDHS17,AGNON}. Notably, \citet{corrACGWright} analyzed the behavior of AG methods near strict saddle points and investigated
the rate of divergence from a strict saddle point for toy quadratic problems. \citet{AGNON} analyzed a single-loop algorithm based on  Nesterov's AG method for deterministic non-convex optimization.  However, none of these studies provide an explicit complexity guarantee on extracting NC from the Hessian matrix for a general non-convex problem.
Inspired by these studies, we will show that 
Nesterov's AG (NAG) method~\citep{opac-b1104789} when applied the function $\hat f(\u)$ can find a NC  with a complexity of $\widetilde O(1/\sqrt{\gamma})$.


The updates of NAG method applied to the function $\hat f(\u)$ at a given point $\x$ is given by 
\begin{equation}\label{eqn:neonpn}
\begin{aligned}
\y_{\tau+1}& = \u_{\tau} - \eta \nabla \hat f(\u_{\tau})  \\
\u_{\tau+1}& =\y_{\tau+1} +  \zeta(\y_{\tau+1} - \y_{\tau})
\end{aligned}
\end{equation}
where the term $\zeta(\y_{\tau+1} - \y_{\tau})$ is the momentum term, and $\zeta\in(0, 1)$ is the momentum parameter.  
The proposed algorithm based on the NAG method (referred to as NEON$^+$) for extracting NC of a Hessian matrix $\nabla^2 f(\x)$ is presented in Algorithm~\ref{alg:neonp},  where  $$\Delta_\x(\y_\tau,\u_\tau)=\hat f_\x(\y_\tau)- \hat f_\x(\u_\tau) -  \nabla \hat f_\x(\u_\tau)^{\top}(\y_\tau - \u_\tau),$$ and NCFind is a procedure that returns a NC by searching over the history $\y_{0:\tau}, \u_{0:\tau}$ shown in Algorithm~\ref{alg:ncfind}. The condition check in Step 4 is to detect easy cases such that NCFind can easily find a NC in historical solutions without continuing the update. It is notable that NCFind is similar to a procedure called Negative Curvature Exploitation (NCE) in~\citep{AGNON}. However, the difference is that NCFind is tailored to finding a negative curvature satisfying~(\ref{eqn:nc}), while NCE in~\citep{AGNON} is for ensuring a decrease on a modified objective. 

\begin{algorithm}[t]
\caption{Accelerated Gradient methods for Extracting NC from Noise: \neonp$(f, \x, t, \mathcal F, U,  \zeta, r)$}\label{alg:neonp}
\begin{algorithmic}[1]
\STATE \textbf{Input}:  $f, \x, t, \mathcal F,  U, \zeta, r$
\STATE Generate $\y_0=\u_0$ randomly from the sphere of an Euclidean ball of radius $r$
\FOR{$\tau=0,\ldots, t$}
\IF{$\Delta_\x(\y_\tau, \u_\tau)< - \frac{\gamma}{2}\|\y_\tau - \u_\tau\|^2$}
\STATE {\bf return} $\v=$NCFind($\y_{0:\tau}, \u_{0:\tau}$)
\ENDIF
\STATE compute $(\y_{\tau+1}, \u_{\tau+1})$  by~(\ref{eqn:neonpn})
\ENDFOR
\IF{$\min_{\|\y_{\tau}\|\leq U }\hat f_\x(\y_\tau)  \leq -2\mathcal F$ }
\STATE  let ${\tau'}= \arg\min_{\tau, \|\y_\tau\|\leq  U}\hat f_\x(\y_\tau)$
\STATE  {\bf return} $\y_{\tau'}$
\ELSE 
\STATE {\bf return} $0$ 
\ENDIF
\end{algorithmic}
\end{algorithm}
\begin{algorithm}[t]
\caption{NCFind $(\y_{0:\tau}, \u_{0:\tau})$}\label{alg:ncfind}
\begin{algorithmic}[1]
\IF{$\min_{j=0,\ldots, \tau}\|\y_{j} - \u_j\|\geq \zeta\sqrt{6\eta\mathcal{F}}$}
\STATE {\bf return} $\y_j$, \\where $j = \min\{j': \|\y_{j'}-\u_{j'}\|\geq \zeta\sqrt{6\eta\mathcal{F}}\}$
\ELSE 
\STATE {\bf return} $\y_\tau- \u_{\tau}$
\ENDIF
\end{algorithmic}
\end{algorithm}

Before presenting our main result, we first present an informal analysis about why \neonp~is faster than \neon~based on an eigen-gap argument.  This analysis is from a perspective of Power method for computing dominating eigen-vectors of a matrix in light of our goal is to compute a NC of a Hessian matrix corresponding to negative eigen-values. In particular, by ignoring the error of $\nabla\hat f(\u_\tau) = \nabla f(\x+\u_\tau) - \nabla f(\x)$ for approximating $H=\nabla^2 f(\x)$, the update in~(\ref{eqn:neonpn}) can be written as 
\begin{align*}
\uh_{\tau+1} = \left[\begin{array}{cc}(1 + \zeta)(I - \eta H)& -\zeta (I - \eta H)\\ I & 0 \end{array}\right] \uh_{\tau-1}. 
\end{align*}
where
$\uh_{\tau+1} = (\u_{\tau+1}^{\top}, \u_\tau^{\top})^{\top}$. 
The above sequence can be considered as an application of the Power method to an augmented matrix:
$A =  \left[\begin{array}{cc}(1 + \zeta)(I - \eta H)& -\zeta (I - \eta H)\\ I & 0 \end{array}\right]$.
According to existing analysis of the Power method for computing top eigen-vectors in the top-$k$ eigen-space of the matrix $A$~(e.g., \cite{DBLP:conf/nips/HardtP14}), the iteration complexity depends on the eigen-gap $\Delta_k$ of $A$  in an order of $\widetilde O(1/\Delta_k)$, where the eigen-gap is defined as the difference between the $k$-th largest  eigen-value of $A$ and the $(k+1)$-th largest eigen-value of $A$. 
We can show that if the eigen-values of $H$ satisfy $\lambda_1\leq\lambda_2\ldots\leq\lambda_k\leq -\gamma<0\leq \lambda_{k+1}\leq\lambda_d$, then by choosing $\zeta = 1  - \sqrt{\eta\gamma}\in(0,1)$, the eigen-gap $\Delta_k$ of $A$ corresponding to its top-$k$  eigen-space is at least $\sqrt{\eta\gamma}/2$. This informal analysis helps understand that why NEON$^+$ with NAG has a complexity of $\widetilde O(1/\sqrt{\gamma})$. 
Nevertheless, a formal analysis yields the following result. 
\begin{thm}\label{thm:main:AGD}
For any $\gamma\in(0,1)$ and a sufficiently small $\delta\in(0,1)$, let $\x$ be a point such that $\lambda_{\min}(\nabla^2 f(\x))\leq -\gamma$. For any constant $\hat c\geq 43$, there  exists a constant $c_{\max}$ that depends on $\hat c$, such that  if \neonp is called with $t = \sqrt{\frac{\hat c\log (dL_1 /(\gamma\delta))}{\eta \gamma}}$,  $\mathcal F=\eta  \gamma^3 L_1L_2^{-2} \log^{-3}(dL_1 /(\gamma\delta))$, $r = \sqrt{\eta }\gamma^2L_1^{-1/2}L_2^{-1}\log^{-2}(dL_1 /(\gamma\delta))$, $U=12\hat c(\sqrt{\eta L_1}\mathcal F/L_2)^{1/3}$, a small constant $\eta\leq c_{\max}/L_1$, and a momentum parameter  $\zeta = 1 - \sqrt{\eta \gamma}$, then  with high probability $1-\delta$ it returns a vector $\u$ such that 
$\frac{\u^{\top}\nabla^2 f(\x)\u}{\|\u\|^2} \leq -\frac{\gamma}{72\hat c^2 \log(dL_1 /(\gamma\delta))} \leq  -\widetilde\Omega(\gamma)$. 
If \neonp returns $\u\neq 0$, then the above inequality must hold; if \neonp returns $0$, we can conclude that $\lambda_{\min}(\nabla^2 f(\x))\geq -\gamma$ with high probability $1- O(\delta)$. 
\end{thm}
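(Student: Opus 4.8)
The plan is to reduce everything to the auxiliary function $\hat f(\u):=\hat f_\x(\u)$ of~(\ref{eqn:objn}), which satisfies $\hat f(0)=0$, $\nabla\hat f(0)=0$, $\nabla^2\hat f(\u)=\nabla^2 f(\x+\u)$, so that $H:=\nabla^2\hat f(0)=\nabla^2 f(\x)$ has $\lambda_{\min}(H)\le-\gamma$ and $\hat f$ inherits the $L_1$-gradient-Lipschitz and $L_2$-Hessian-Lipschitz properties of $f$. Algorithm~\ref{alg:neonp} is exactly NAG~(\ref{eqn:neonpn}) run on $\hat f$ from $\u_0=\y_0$ uniform on the radius-$r$ sphere, terminated by NCFind or by the post-loop screening. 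Accordingly I would prove (a) a \emph{deterministic} statement: whenever \neonp outputs $\u\ne 0$, the Rayleigh bound $\u^\top H\u/\|\u\|^2\le-\gamma/(72\hat c^2\log(dL_1/(\gamma\delta)))$ holds; and (b) a \emph{probabilistic} statement: if $\lambda_{\min}(H)\le-\gamma$ then \neonp outputs $\u\ne 0$ with probability $\ge 1-\delta$. Part (b) with part (a) gives the high-probability conclusion, and its contrapositive gives the last sentence of the theorem.

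For (a), the two tools are the second-order expansions implied by~(\ref{eqn:HessianC}): for $\|\u\|\le U$, $\tfrac12\u^\top H\u\le\hat f(\u)+\tfrac{L_2}{6}\|\u\|^3$, and for $\|\y\|,\|\u\|\le U$, $|\Delta_\x(\y,\u)-\tfrac12(\y-\u)^\top H(\y-\u)|\le\tfrac{L_2}{2}U\|\y-\u\|^2$. Branch (i): screening returns $\y_{\tau'}$ with $\hat f(\y_{\tau'})\le-2\mathcal F$, $\|\y_{\tau'}\|\le U$; since $U=12\hat c(\sqrt{\eta L_1}\mathcal F/L_2)^{1/3}$, a constant $\eta\le c_{\max}/L_1$ makes $\tfrac{L_2}{6}U^3\le\mathcal F$, so $\tfrac12\y_{\tau'}^\top H\y_{\tau'}\le-\mathcal F$, and dividing by $\|\y_{\tau'}\|^2\le U^2$ and substituting $\mathcal F=\eta\gamma^3 L_1L_2^{-2}\log^{-3}(dL_1/(\gamma\delta))$ gives exactly $-2\mathcal F/U^2=-\gamma/(72\hat c^2\log(dL_1/(\gamma\delta)))$ — this branch pins the stated constant. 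Branch (ii): NCFind returns $\y_\tau-\u_\tau$ only when all momentum gaps $\|\y_s-\u_s\|=\zeta\|\y_s-\y_{s-1}\|$ up to $\tau$ were below $\zeta\sqrt{6\eta\mathcal F}$, which with the stated $t$ keeps $\|\y_\tau\|\le r+t\sqrt{6\eta\mathcal F}\ll U$ and $\|\u_\tau\|\ll U$; the trigger $\Delta_\x(\y_\tau,\u_\tau)<-\tfrac\gamma2\|\y_\tau-\u_\tau\|^2$ and the divergence bound then give $(\y_\tau-\u_\tau)^\top H(\y_\tau-\u_\tau)/\|\y_\tau-\u_\tau\|^2<-\gamma+L_2U\le-\gamma/2$, stronger than claimed. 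Branch (iii): NCFind returns $\y_j$ only once some momentum gap exceeded the threshold; here I invoke the negative-curvature-exploitation mechanism (cf.~NCE in~\citep{AGNON}): a velocity $\|\y_j-\y_{j-1}\|\ge\sqrt{6\eta\mathcal F}$ accumulated without a commensurate Hamiltonian drop certifies $\y_j^\top H\y_j/\|\y_j\|^2\le-\Omega(\gamma)$, again no weaker than the claimed bound.

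For (b), assume $\lambda_{\min}(H)\le-\gamma$ and, for contradiction, that the run outputs $0$ (the only randomness is $\u_0$). Then NCFind never fires, so the ``almost-convexity'' condition $\Delta_\x(\y_\tau,\u_\tau)\ge-\tfrac\gamma2\|\y_\tau-\u_\tau\|^2$ holds along the whole trajectory and $\hat f(\y_\tau)>-2\mathcal F$ whenever $\|\y_\tau\|\le U$. With the Hamiltonian $E_\tau:=\hat f(\y_\tau)+\tfrac1{2\eta}\|\y_\tau-\y_{\tau-1}\|^2$, the standard NAG bookkeeping (using $\eta L_1\le c_{\max}$ together with almost-convexity) yields the dissipation $E_\tau-E_{\tau+1}\ge\Omega(\tfrac1\eta\|\y_{\tau+1}-\y_\tau\|^2)$, hence $E_\tau$ is non-increasing and $\sum_\tau\|\y_{\tau+1}-\y_\tau\|^2\le O(\eta(E_0-E_t))$; if $E_0-E_t$ were small the whole trajectory would be confined to a ball of radius $o(\gamma/L_2)$ around $0$, on which $\nabla^2\hat f$ stays within $\gamma/4$ of $H$ (``improve-or-localize''). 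On that ball I run an accelerated power-method argument: letting $v_1$ be a unit eigenvector of $H$ with eigenvalue $\lambda_1\le-\gamma$ and $p_\tau:=\langle\u_\tau,v_1\rangle$, equations~(\ref{eqn:neonpn}) and~(\ref{eqn:HessianC}) give $p_{\tau+1}=(1+\zeta)(1-\eta\lambda_1)p_\tau-\zeta(1-\eta\lambda_1)p_{\tau-1}+\xi_\tau$ with $|\xi_\tau|\le O(\eta L_2)\max_{s\le\tau}\|\u_s\|^2$, whose homogeneous part at $\zeta=1-\sqrt{\eta\gamma}$ has dominant root $\ge 1+\Omega(\sqrt{\eta\gamma})$ (the eigen-gap computation sketched before the theorem). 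A coupled induction — $|p_\tau|$ grows geometrically while the accumulated perturbation $\sum_s|\xi_s|$ stays a fixed fraction of the current $|p_\tau|$, because on the localization ball $\|\u_s\|$ is itself controlled by $|p_s|$ — then yields $|p_\tau|\gtrsim(1+\Omega(\sqrt{\eta\gamma}))^\tau|p_0|$ for as long as the trajectory remains localized. Since $\u_0$ is uniform on the radius-$r$ sphere, $|p_0|\gtrsim\delta r/\sqrt d$ with probability $\ge 1-\delta$ (the bad $\u_0$ form a slab of width $O(\delta r/\sqrt d)$, exactly as in the coupling argument of~\citep{jin2017escape}); after $t=\sqrt{\hat c\log(dL_1/(\gamma\delta))/(\eta\gamma)}$ steps this drives $|p_\tau|$, hence $\|\y_\tau\|$ (equivalently $\hat f(\y_\tau)$), past the localization scale at some $\tau\le t$, so $E_0-E_t$ is \emph{not} small — contradicting $\hat f(\y_\tau)>-2\mathcal F$ on the $U$-ball. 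Hence on the $(1-\delta)$-probability event $\{|p_0|\gtrsim\delta r/\sqrt d\}$ the algorithm returns $\u\ne 0$.

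The main obstacle is part (b), and within it the coupled induction on $(p_\tau,\|\u_\tau\|)$: because the momentum recursion is governed by a \emph{non-normal} $2\times 2$-block operator, geometric growth of the signal $p_\tau$ is not a one-line consequence of diagonalization, and one must simultaneously prevent the perturbation $\xi_\tau$ — controlled only by the total iterate norm — from swamping the signal, and keep the trajectory inside the ball where the quadratic approximation and the $\approx\gamma$-strong-concavity along the bottom eigenspace remain valid, all while the non-firing of NCFind is precisely what supplies the Hamiltonian dissipation used for localization. This is the accelerated analogue of the argument behind Theorem~\ref{thm:main:GD}, and matching the Lanczos rate $\widetilde O(1/\sqrt\gamma)$ rests on the choice $\zeta=1-\sqrt{\eta\gamma}$, which is exactly what pushes the dominant root of the homogeneous recursion to $1+\Theta(\sqrt{\eta\gamma})$.
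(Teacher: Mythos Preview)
Your overall architecture --- a deterministic Rayleigh bound whenever the output is nonzero (part (a)), then a probabilistic escape guarantee (part (b)) --- matches the paper's, and your case split for (a) lines up with the paper's scenarios. Branch~(i) (post-loop screening) and Branch~(ii) (NCFind with all small momentum gaps) are essentially correct. For Branch~(iii) the paper's mechanism is slightly different from what you sketch: since NCFind did \emph{not} fire before step $\tau$, the Hamiltonian inequality~(\ref{eqn:lm8key}) holds for $k\le j-1$, so the large velocity at step $j$ forces a Hamiltonian --- hence $\hat f$ --- \emph{drop}, giving $\hat f(\y_j)\le\hat f(\y_0)-3\mathcal F\le-2\mathcal F$ and then the same Rayleigh bound as Branch~(i). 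What you describe (``velocity accumulated without a commensurate Hamiltonian drop'') is closer to the NCE mechanism of~\citep{AGNON}, which would certify NC along the velocity direction; either route can be made to work, but they are not the same argument.

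The substantive gap is in part~(b). You propose to track the scalar $p_\tau=\langle\u_\tau,\e_1\rangle$ (your $v_1$ is the paper's $\e_1$) and show it blows up, with the nonlinearity contributing an additive error $|\xi_\tau|\lesssim\eta L_2\|\u_\tau\|^2$. On the localization ball this error is of \emph{fixed} order $\eta L_2(\hat c\mathcal P)^2$, while the initial signal is only $|p_0|\gtrsim\delta r/\sqrt d$; unwinding the definitions, the accumulated amplified error exceeds $|p_0|$ by a factor $\Theta\bigl((\eta L_1)\sqrt{\kappa d}/\delta\bigr)$, which cannot be made $\le 1$ by a constant $c_{\max}$ depending only on $\hat c$. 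The step that fails is ``on the localization ball $\|\u_s\|$ is itself controlled by $|p_s|$'': already $\|\u_0\|=r$ but $|p_0|\sim\delta r/\sqrt d$, and the components of $\u_\tau$ orthogonal to $\e_1$ stay at the $\mathcal P$-scale throughout. The paper (following~\citep{jin2017escape} and using the $2\times 2$-block machinery of~\citep{AGNON}) instead runs a \emph{coupling} argument: a second trajectory $\w_\tau$ is started from $\w_0=\u_0+\mu r\e_1$, and the difference $\v_\tau=\w_\tau-\u_\tau$ has error proportional to $\|\v_\tau\|$ itself (times the small factor $L_2\hat c\mathcal P=O(\sqrt{\eta L_1}\gamma)$), so the homogeneous growth provably dominates by the induction~(\ref{ineq:key1}). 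The ``bad-slab'' volume bound you cite is then a \emph{consequence} of this coupling (if $\u_0$ fails to escape then every $\u_0\pm\mu r\e_1$, $\mu\in[\delta/(2\sqrt d),1]$, succeeds), not a separate ingredient layered on top of a direct single-trajectory analysis. Without replacing the single-trajectory induction on $(p_\tau,\|\u_\tau\|)$ by the two-trajectory coupling, part~(b) does not close.
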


\subsection{Stochastic Approach for Extracting NC}
In this subsection, we will present a stochastic approach for extracting NC. For simplicity, we refer to both NEON and NEON$^+$ as NEON. The challenge in employing NEON for finding a NC for the original function $F(\x)$ in~(\ref{eqn:opt}) is that we cannot evaluate the gradient of $F(\x)$ exactly. To address this issue, we resort to the mini-batching technique. 

Let $\S = \{\xi_1, \ldots, \xi_m\}$ denote a set of random samples and define: 
$F_{\S}(\x) = \frac{1}{m}\sum_{i=1}^m f(\x; \xi_i)$. 
Then we apply NEON to $F_\S(\x)$ for finding an approximate  NC $\u_\S$ of $\nabla^2 F_\S(\x)$. 
Below, we show that as long as $m$ is sufficiently large, $\u_\S$ is also an approximate NC of $\nabla^2 F(\x)$. 

\begin{lemma}\label{lem:NCNmini}
For a sufficiently small $\delta\in(0,1)$ and $\hat c \geq 43$, let $m\geq \frac{16L_1^2\log(6d/\delta)}{s^2\gamma^2}$, where $s=\frac{\log^{-1}(3dL_1/(2\gamma\delta))}{(12\hat c)^2}$ is a proper small constant. If $\lambda_{\min}(\nabla^2 F(\x))\leq -\gamma$, there exists $c>0$ such that with probability $1-\delta$, NEON($F_\S, \x, t, \mathcal F, r)$  returns  a vector $\u_\S$ such that 
 $\frac{\u_\S^{\top}\nabla^2F(\x)\u_\S}{\|\u_\S\|^2}\leq - c\gamma$, 
where $c = (12 \hat c)^{-2}\log^{-1}(3dL_1/(2\gamma\delta))$. If  NEON($F_\S, \x, t, \mathcal F, r)$   returns $0$, then with high probability $1-O(\delta)$ we have $\lambda_{\min}(\nabla^2 F(\x))\geq - 2\gamma$. In either case, NEON terminates with an IFO complexity of $\widetilde O(1/\gamma^3)$ or  $\widetilde O(1/\gamma^{2.5})$ corresponding to Algorithm~\ref{alg:ncn} and Algorithm~\ref{alg:neonp}, respectively.
\end{lemma}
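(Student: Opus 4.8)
The plan is to reduce the lemma to Theorems~\ref{thm:main:GD} and~\ref{thm:main:AGD} applied to the random function $F_\S$, and then transfer the negative-curvature guarantee from $\nabla^2 F_\S(\x)$ to $\nabla^2 F(\x)$ via a matrix concentration bound. The first step is to establish that, when $m$ is as large as claimed, the spectral deviation $\|\nabla^2 F_\S(\x) - \nabla^2 F(\x)\|_2$ is at most $\frac{s\gamma}{2}$ with probability at least $1-\delta/3$. Each summand $\nabla^2 f(\x;\xi_i)$ has spectral norm at most $L_1$ (from the Lipschitz-gradient part of Assumption~\ref{ass:1}(i), since $\|\nabla^2 f(\x;\xi)\|_2\le L_1$), so a matrix Bernstein / Hoeffding inequality in $\R^{d\times d}$ gives a tail of the form $2d\exp(-m t^2/(c L_1^2))$; setting $t = s\gamma/2$ and plugging in $m\geq 16 L_1^2\log(6d/\delta)/(s^2\gamma^2)$ makes this at most $\delta/3$, modulo tracking the exact constant. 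The key consequences are: (a) if $\lambda_{\min}(\nabla^2 F(\x))\leq -\gamma$ then $\lambda_{\min}(\nabla^2 F_\S(\x))\leq -\gamma + s\gamma/2 \leq -\gamma/2$ (so with parameter $\gamma/2$ in place of $\gamma$, or equivalently after a harmless rescaling, the NEON theorems apply), and (b) for \emph{any} unit vector $\u$, $\u^\top\nabla^2 F(\x)\u \leq \u^\top\nabla^2 F_\S(\x)\u + s\gamma/2$.

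Next I would invoke Theorem~\ref{thm:main:GD} (resp.\ Theorem~\ref{thm:main:AGD}) on $f = F_\S$: on the event above, $\lambda_{\min}(\nabla^2 F_\S(\x))\leq -\gamma/2$, so with probability $1-\delta/3$ (conditionally, hence $1-2\delta/3$ overall after a union bound) NEON returns $\u_\S$ with $\frac{\u_\S^\top\nabla^2 F_\S(\x)\u_\S}{\|\u_\S\|^2}\leq -\frac{\gamma/2}{8\hat c^2\log(dL_1/(\gamma\delta))}$ for NEON (a similar bound with $72\hat c^2$ for NEON$^+$). Combining with consequence (b): $\frac{\u_\S^\top\nabla^2 F(\x)\u_\S}{\|\u_\S\|^2}\leq -\frac{\gamma}{16\hat c^2\log(\cdot)} + \frac{s\gamma}{2}$, and the choice $s = \frac{\log^{-1}(3dL_1/(2\gamma\delta))}{(12\hat c)^2}$ is calibrated precisely so that the $s\gamma/2$ term absorbs at most half of the negative-curvature budget, leaving $\frac{\u_\S^\top\nabla^2 F(\x)\u_\S}{\|\u_\S\|^2}\leq -c\gamma$ with $c = (12\hat c)^{-2}\log^{-1}(3dL_1/(2\gamma\delta))$ as stated. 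Conversely, if NEON returns $0$, then by the contrapositive part of the NEON theorems $\lambda_{\min}(\nabla^2 F_\S(\x))\geq -\gamma$ with high probability, and applying consequence (a) in reverse gives $\lambda_{\min}(\nabla^2 F(\x))\geq -\gamma - s\gamma/2 \geq -2\gamma$; I would union-bound all the failure events to get the claimed $1-O(\delta)$.

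Finally, the IFO complexity: each iteration of NEON on $F_\S$ evaluates two gradients of $F_\S$, i.e.\ $2m$ IFO calls, and the number of iterations is $t = \widetilde O(1/\gamma)$ for Algorithm~\ref{alg:ncn} or $t = \widetilde O(1/\sqrt{\gamma})$ for Algorithm~\ref{alg:neonp} (from the stated $t$ in the theorems, with $\eta = \Theta(1/L_1)$). Since $m = \widetilde O(1/\gamma^2)$, the total is $\widetilde O(1/\gamma^3)$ and $\widetilde O(1/\gamma^{2.5})$ respectively. The main obstacle I anticipate is the bookkeeping around the parameter rescaling: the NEON theorems are stated for a point satisfying $\lambda_{\min}\leq -\gamma$, but we only know $\lambda_{\min}(\nabla^2 F_\S(\x))\leq -\gamma/2$, so one must either re-run the theorems with $\gamma/2$ (which changes the constants and the required $t$, $\mathcal F$, $r$, $U$ — and these must still match the parameter settings under which the lemma's conclusion and the $m$-bound are stated) or argue that the chosen $s$ is small enough that $-\gamma + s\gamma/2$ is within a constant factor of $-\gamma$ and the theorem constants can be adjusted accordingly. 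Keeping the logarithmic arguments ($dL_1/(\gamma\delta)$ vs.\ $3dL_1/(2\gamma\delta)$ vs.\ $6d/\delta$) consistent throughout is the fiddly part; the probabilistic and spectral content is routine.
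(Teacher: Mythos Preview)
Your proposal is correct and follows essentially the same route as the paper: matrix concentration on the Hessian (the paper cites its Lemma~\ref{lemma:concen:hessian} rather than rederiving Bernstein), then invoke the NEON theorems on $F_\S$, then transfer the Rayleigh-quotient bound back to $\nabla^2 F(\x)$ via the same spectral perturbation, and finally read off the IFO count as $m\cdot t$. The only cosmetic discrepancy is that the paper uses a concentration threshold of $s\gamma$ (not $s\gamma/2$), which is exactly what $m\geq 16L_1^2\log(6d/\delta)/(s^2\gamma^2)$ buys, and the paper handles your ``parameter rescaling'' obstacle simply by noting $s\leq 1/4$ so that $\lambda_{\min}(\nabla^2 F_\S(\x))\leq -3\gamma/4$ and absorbing this into the constants---exactly as you suggested in your final paragraph.
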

\paragraph{Simulation.}Before ending this section, we present some simulations to verify  the proposed NEON procedures for extracting NC. To this end, we consider minimizing non-linear least square loss with a non-convex regularizer for classification, i.e., 
\begin{align*}
F(\x) = \sum_{i=1}^d\frac{x_i^2}{1+x_i^2} + \frac{\lambda}{n}\sum_{i=1}^n(b_i - \sigma(\x^{\top}\a_i))^2
\end{align*}
where $b_i\in\{0, 1\}$ denotes the label and $\a_i\in\R^d$ denotes the feature vector of the $i$-th data, $\lambda>0$ is a trade-off parameter, and $\sigma(\cdot)$ is a sigmoid function. We generate a random vector $\x\sim \mathcal N(0, I)$ as the target point to construct $\hat F_\x(\u)$ and compute a NC of $\nabla^2 F(\x)$. We use a binary classification data named gisette from the libsvm data website that has $n=6000$ examples and $d=5000$ features, and set $\lambda=3$ in our simulation to ensure there is significant NC from the non-linear least-square loss. The step size $\eta$ and initial radius in NEON procedures are set to be 0.01 and the momentum parameter in NEON$^+$ is set to be 0.9. 

First, we compare different NEON procedures (NEON, NEON$^+$) with second-order methods that use Hessian-vector products, namely the Power method and the Lanczos method, where the Hessian-vector products are calculated exactly. The result is shown in Figure~\ref{fig:a} whose $y$-axis denotes the value of $\uh^{\top}H\uh$,  where $\uh$ represents the found normalized NC vector and $H=\nabla^2 F(\x)$ is the Hessian matrix. Second, we compare different NEON procedures and second-order methods with the stochastic versions of NEON (denoted by NEONst and NEON$^+$st in the figure) that run on a sub-sampled data with a sample size of 100 in Figure~\ref{fig:b}, where the $x$-axis represents the \#IFO calls. Please note that the solid red curve corresponding to NEON$^+$st in Figure~\ref{fig:b} terminates earlier due to that NCFind is executed.   Several observations follow: (i) NEON performs similarly to the Power method (the two curves overlap in the figure); (ii) NEON$^+$ has a faster convergence than NEON; 
 (iv) the stochastic versions of NEON and NEON$^+$ can quickly find a good NC directions than their full versions in terms of IFO complexity and are even competitive with the Lanczos method. Finally, we note that the curves may look different for different target vector $\x$ but the overall trend looks similarly. We include several more results in the supplement.

\begin{figure}[t] 
\centering
	\subfigure[NEON on full data\label{fig:a}]{\includegraphics[scale=0.37]{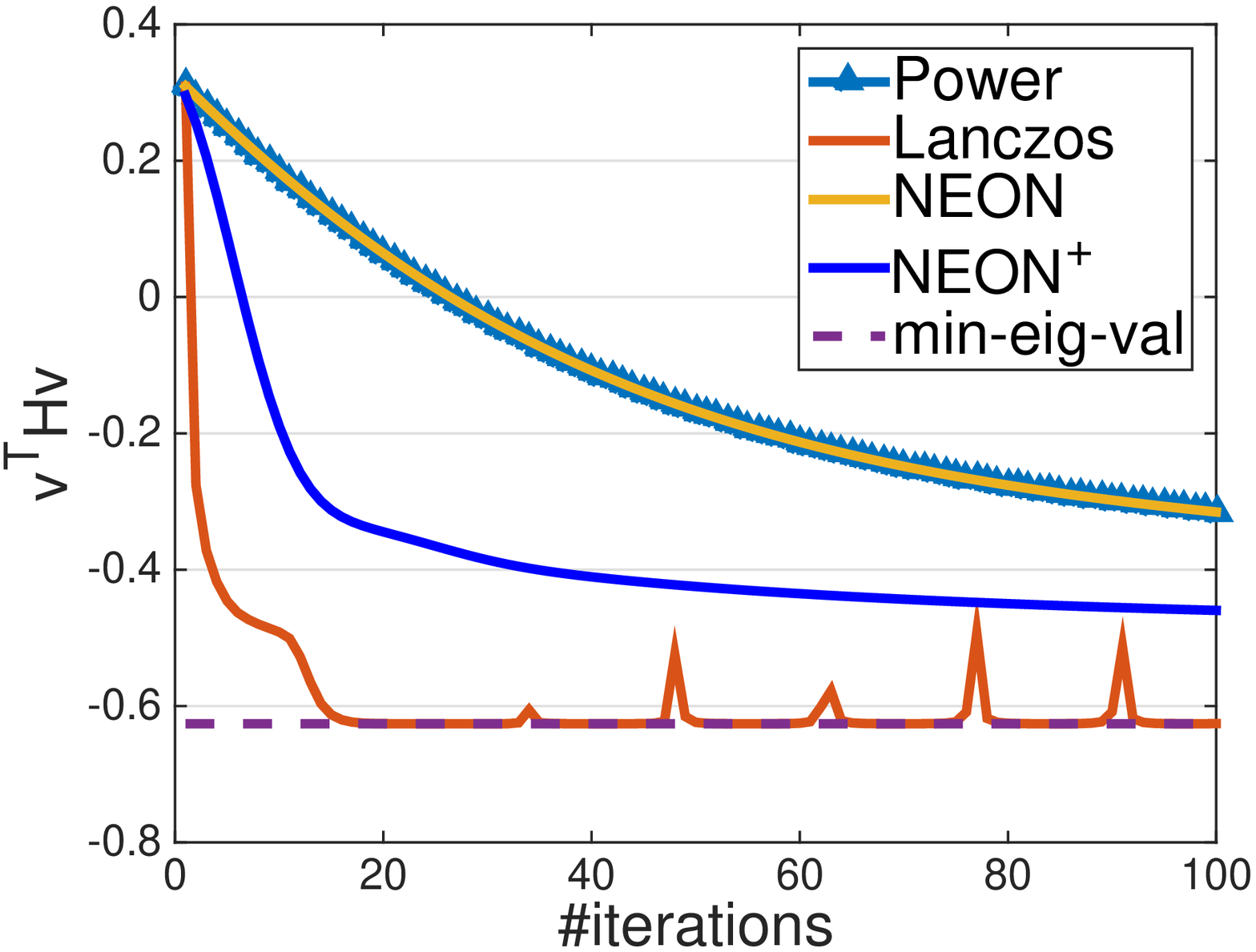}}
    \subfigure[NEON on sub-sampled data\label{fig:b}]{\includegraphics[scale=0.37]{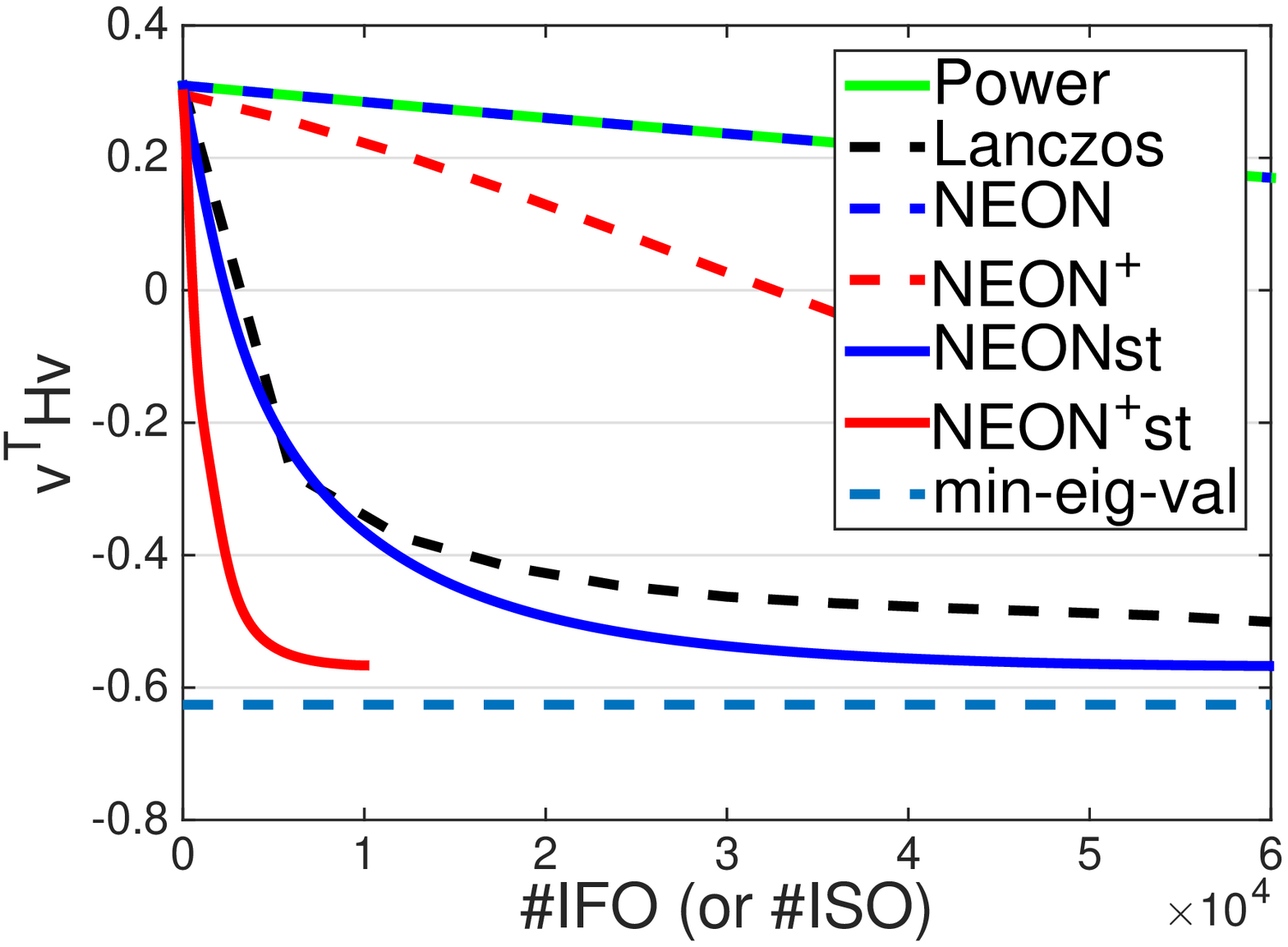}}  
\caption{Comparison between different NEON procedures and Second-order Methods}
\label{fig:neon}
\end{figure}

\section{First-order Algorithms for Stochastic Non-Convex Optimization}
In this section, we will first present a general framework for promoting existing first-order stochastic algorithms denoted by $\mathcal A$ to have a second-order convergence, which is shown in Algorithm~\ref{alg:general}. The proposed NEON is used for escaping from a saddle point. It should be noted that Algorithm~\ref{alg:general} is abstract depending on how to implement Step 3, how to check the first-order condition, and how to set the step size parameter $\bar\xi$ in Step 9. In practice, one might use some heuristic approaches to check the first-order condition or postpone checking the first-order condition until a sufficiently large number of Step 3 is executed. For $\bar\xi$, one may use the mini-batch stochastic gradient on $\y_j$ denoted by $\g(\y_j)$ to set $\bar\xi = \text{sign}(\u_j^{\top}\g(\y_j))$ similar to that in~(\ref{eqn:ncu}). Of course other parameters in Step 9 should be tuned as well in practice.

\begin{algorithm}[t]
\caption{NEON-$\mathcal A$
}\label{alg:general}
\begin{algorithmic}[1]
\STATE \textbf{Input}:  $\x_1$ and other parameters of algorithm $\mathcal A$
\FOR{$j=1,2,\ldots,$}
\STATE Compute $(\y_j, \z_j) = \mathcal A(\x_j)$
\IF{first-order condition of $\y_j$ not met}
\STATE let $\x_{j+1} = \z_j$
\ELSE
\STATE Compute $\u_j = \text{NEON}(F_{\S_2}, \y_j, t, \mathcal F, r)$
\STATE {\bf if} $\u_j=0$ {\bf return} $\y_j$
\STATE {\bf else} let $\x_{j+1} = \y_j - \frac{c\gamma\bar\xi}{L_2}\frac{\u_j}{\|\u_j\|}$ 
\ENDIF
\ENDFOR
\end{algorithmic}
\end{algorithm}

For theoretical interest, we will analyze Algorithm~\ref{alg:general} with a Rademacher random variable $\bar\xi\in\{1,-1\}$ and its three main components satisfying the following properties. 
\begin{prop}\label{property:1}
(1) Step 7 - Step 9 guarantees that if $\lambda_{\min}(\nabla^2 F(\y_j))\leq - \gamma$, there exists $C>0$ such that  $\E[F(\x_{j+1}) - F(\y_j)]\leq - C\gamma^3$. Let the total IFO complexity of Step 7 - Step 9 be $T_n$. (2) There exists  a first-order stochastic algorithm $\mathcal A(\x_j)$ that satisfies:
\begin{equation}\label{eqn:A}
\begin{aligned}
\|\nabla F(\y_j)\|\geq \epsilon\rightarrow \E[F(\z_j) - F(\x_j)]\leq - \varepsilon(\epsilon, \alpha)\\
\|\nabla F(\y_j)\|\leq \epsilon \rightarrow \E[F(\y_j) - F(\x_j)]\leq C\gamma^3/2
\end{aligned}
\end{equation}
where $\varepsilon(\epsilon, \alpha)$ is a function of $\epsilon$ and a parameter $\alpha>0$. Let the total IFO complexity of $\mathcal A(\x)$ be $T_a$. (3) the check of first-order condition can be implemented by using a mini-batch of samples $\mathbf S$, i.e., $\|\nabla F_{\mathbf S}(\y_j)\|\leq \epsilon$, where $\mathbf S$ is independent of $\y_j$ such that $\|\nabla F(\y_j) - \nabla F_{\mathbf S}(\y_j)\|\leq \epsilon/2$. Let the IFO complexity of checking the first-order condition be $T_c$.
\end{prop}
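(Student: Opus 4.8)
The plan is to establish the three parts of Proposition~\ref{property:1} by stitching together results already in hand: part~(1) combines Lemma~\ref{lem:NCNmini} with Lemma~\ref{lemma:NCD:onestep}, part~(3) is a routine concentration estimate, and part~(2) is discharged by pointing to known first-order stochastic methods. The substance lies in part~(1), so I treat it first.

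For part~(1), suppose iteration $j$ reaches Step~7 with $\lambda_{\min}(\nabla^2 F(\y_j))\le-\gamma$. First I would record two preliminaries: $F$ inherits an $L_2$-Lipschitz Hessian (and an $L_1$-bounded Hessian) from Assumption~\ref{ass:1}(i) by averaging the per-sample spectral bounds and using the triangle inequality, and reaching Step~7 means the first-order check $\|\nabla F_{\mathbf S}(\y_j)\|\le\epsilon$ passed, so by part~(3) $\|\nabla F(\y_j)\|\le\frac{3}{2}\epsilon$. Now apply Lemma~\ref{lem:NCNmini} to the fresh mini-batch $\S_2$ of the prescribed size: with probability $1-\delta$ the call $\mathrm{NEON}(F_{\S_2},\y_j,\dots)$ returns a nonzero $\u_j$ with $\u_j^{\top}\nabla^2 F(\y_j)\u_j/\|\u_j\|^2\le-c\gamma$, $c=(12\hat c)^{-2}\log^{-1}(3dL_1/(2\gamma\delta))$. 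On this good event I condition on $\u_j$ and take expectation over the Rademacher $\bar\xi$ only; the update $\x_{j+1}=\y_j-\frac{c\gamma\bar\xi}{L_2}\frac{\u_j}{\|\u_j\|}$ is exactly the $\x'_+$ update of Lemma~\ref{lemma:NCD:onestep} with $\v=\u_j/\|\u_j\|$, which gives $\E_{\bar\xi}[F(\y_j)-F(\x_{j+1})\mid\u_j]\ge\frac{c^3\gamma^3}{3L_2^2}$ — note the linear term $\bar\xi\,\nabla F(\y_j)^{\top}\v$ averages to zero, so the bound on $\|\nabla F(\y_j)\|$ is only needed to keep the Taylor remainder finite. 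On the complementary event (probability $\le\delta$) either NEON returns $0$ and the algorithm stops (take $\x_{j+1}=\y_j$, no change), or it returns a bad nonzero $\u_j$, in which case the same third-order Taylor expansion with only $\v^{\top}\nabla^2 F(\y_j)\v\le\|\nabla^2 F(\y_j)\|_2\le L_1$ bounds the ($\bar\xi$-averaged) increase by $B:=\frac{c^2\gamma^2 L_1}{2L_2^2}+\frac{c^3\gamma^3}{6L_2^2}=O(\gamma^2)$. Hence $\E[F(\x_{j+1})-F(\y_j)]\le-(1-\delta)\frac{c^3\gamma^3}{3L_2^2}+\delta B$, and since $\delta$ is a free parameter we may take it polynomially small in $\gamma$ (inflating only the logarithmic factors already present in $c$), so the last term is lower order and $\E[F(\x_{j+1})-F(\y_j)]\le-C\gamma^3$ with, say, $C=\frac{c^3}{6L_2^2}$. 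The IFO cost $T_n$ of Step~7--Step~9 is just the cost of this single NEON call, which Lemma~\ref{lem:NCNmini} bounds by $\widetilde O(1/\gamma^3)$ for Algorithm~\ref{alg:ncn} and $\widetilde O(1/\gamma^{2.5})$ for Algorithm~\ref{alg:neonp} (the Rademacher $\bar\xi$ costs no gradient).

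For part~(3), draw $\mathbf S$ as a fresh mini-batch independent of $\y_j$ (legitimate since $\y_j=\mathcal A(\x_j)$ depends only on the samples used inside $\mathcal A$, which we keep disjoint from $\mathbf S$). Under Assumption~\ref{ass:1}(iii) a sub-Gaussian vector concentration inequality gives $\|\nabla F_{\mathbf S}(\y_j)-\nabla F(\y_j)\|\le\epsilon/2$ with probability $1-\delta$ once $|\mathbf S|=\widetilde O(G^2/\epsilon^2)$ (or $|\mathbf S|=\widetilde O(V/\epsilon^2)$ under Assumption~\ref{ass:1}(iv) via a median-of-means argument), so $T_c=\widetilde O(1/\epsilon^2)$, and the triangle inequality then yields the implication $\|\nabla F_{\mathbf S}(\y_j)\|\le\epsilon\Rightarrow\|\nabla F(\y_j)\|\le\frac{3}{2}\epsilon$ used above. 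Part~(2) is instantiated rather than proved in the abstract: SGD, stochastic momentum, mini-batch SGD and SCSG (and SVRG in the finite-sum case) each come with existing analyses giving the dichotomy~(\ref{eqn:A}) — a per-call objective decrease $\varepsilon(\epsilon,\alpha)$ when $\|\nabla F(\y_j)\|\ge\epsilon$, and a descent-type guarantee ensuring the stochastic step does not inflate $F$ by more than $C\gamma^3/2$ otherwise — with the concrete verification for each choice of $\mathcal A$, and the resulting end-to-end complexity, deferred to the instantiations NEON-SGD, NEON-SCSG, etc.

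The step I expect to be the main obstacle is exactly the expectation-versus-high-probability mismatch in part~(1): Lemma~\ref{lem:NCNmini} only certifies the negative-curvature direction with probability $1-\delta$, whereas the framework needs an \emph{unconditional} expected decrease. Handling this forces two choices that must be made carefully — using the Rademacher update (rather than a $\mathrm{sign}$-based one) so that the first-order term averages out and the worst-case failure contribution collapses to a clean $O(\gamma^2)$ bound, and picking $\delta$ polynomially small in $\gamma$ so that $\delta B$ is dominated while only the log factors move. A secondary but necessary bookkeeping point is independence: $\S_2$ must be fresh relative to $\y_j$ for Lemma~\ref{lem:NCNmini}, and $\mathbf S$ fresh relative to $\y_j$ for part~(3), which is precisely why Algorithm~\ref{alg:general} is written with separate sample sets.
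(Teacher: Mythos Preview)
Your proposal is correct and mirrors the paper's own justification: the paper does not give Proposition~\ref{property:1} a standalone proof but discharges it exactly as you do---part~(1) via Lemma~\ref{lem:NCNmini} plus Lemma~\ref{lemma:NCD:onestep} (the detailed good/bad-event expectation split you outline appears verbatim inside the proof of Theorem~\ref{thm:gen}, with the paper splitting at the quadratic form $\hat\u^\top\nabla^2 F(\y_j)\hat\u$ and choosing $\delta\le c\gamma/(2(c\gamma+L_1))$ rather than at the full objective difference, a cosmetic difference), part~(3) via the sub-Gaussian concentration Lemma~\ref{lemma:concen:grad}, and part~(2) deferred to the per-algorithm corollaries.
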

{\bf Remark:} Property (1)  can be guaranteed by Lemma~\ref{lem:NCNmini} and Lemma~\ref{lemma:NCD:onestep}. When using NEON, $T_n=\widetilde O(1/\gamma^3)$ and when using NEON$^+$, $T_n=\widetilde O(1/\gamma^{2.5})$. For Property (2), we will analyze several interesting algorithms.  Property (3) can be guaranteed by Lemma 3 in the supplement under Assumption~(\ref{ass:1}) (iii) with $T_c = \widetilde O(\frac{1}{\epsilon^2})$.

\begin{algorithm}[t]
\caption{SM: $(\x_0, \eta, \beta, s, t)$
}\label{alg:sm}
\begin{algorithmic}[1]
\FOR{$\tau=0, 1, 2,\ldots,t$}
\STATE Compute $\x_{\tau+1}$ according to~(\ref{eqn:um})
\STATE Compute $\x^+_{\tau+1}$ according to~(\ref{eqn:um2})
\ENDFOR
\STATE {\bf return} $(\x^+_{\tau'}, \x^+_{t+1})$, where $\tau'\in\{0, \ldots, t\}$ is a randomly generated.
\end{algorithmic}
\end{algorithm}

Based on the above properties, we have the following convergence guarantee of Algorithm~\ref{alg:general}. 
\begin{thm}\label{thm:gen}
Assume Properties~\ref{property:1} hold. Then with high probability $1-\delta$, NEON-$\mathcal A$ terminates with 
a total IFO complexity of $\widetilde O(\max(\frac{1}{\varepsilon(\epsilon, \alpha)}, \frac{1}{\gamma^3})(T_n + T_a + T_c))$. Upon termination, with high probability $\|\nabla F(\y_j)\|\leq O(\epsilon)$ and $\lambda_{\min}(\nabla^2 F(\y_j))\geq - 2\gamma$, where $\widetilde O(\cdot)$ hides logarithmic factors of $d$ and $1/\delta$, and problem's other constant parameters.
\end{thm}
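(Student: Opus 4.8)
The proof is a Lyapunov argument with $F$ itself as the potential. Fix an iteration index $j$ and let $\mathcal{G}_j$ be the $\sigma$-field generated by everything up to the start of iteration $j$. The core step is a per-iteration drift inequality. By Property~\ref{property:1}(3) the event ``$\|\nabla F_{\mathbf{S}}(\y_j)-\nabla F(\y_j)\|\le\epsilon/2$'' holds with high probability, and on it the mini-batch check is faithful: if the check fails then $\|\nabla F(\y_j)\|\ge\epsilon/2$, and if it passes then $\|\nabla F(\y_j)\|\le 3\epsilon/2$. Conditioning on that event I split into three cases. (a) The first-order condition is not met: then $\x_{j+1}=\z_j$ and the first line of~(\ref{eqn:A}) in Property~\ref{property:1}(2) gives $\E[F(\x_{j+1})-F(\x_j)\mid\mathcal{G}_j]\le -\varepsilon(\epsilon,\alpha)$. (b) The condition is met and NEON returns $\u_j\neq 0$: by Theorem~\ref{thm:main:GD} (or Theorem~\ref{thm:main:AGD}) together with Lemma~\ref{lem:NCNmini}, a nonzero output is certified to be a genuine negative-curvature direction of $\nabla^2 F(\y_j)$ (so this case is valid even when $\lambda_{\min}(\nabla^2 F(\y_j))>-\gamma$), hence Step~9 is an instance of the Rademacher negative-curvature update~(\ref{eqn:ncu2}) and Property~\ref{property:1}(1), i.e.\ Lemma~\ref{lemma:NCD:onestep}, gives $\E[F(\x_{j+1})-F(\y_j)\mid\mathcal{G}_j]\le -C\gamma^3$; adding the second line of~(\ref{eqn:A}), $\E[F(\y_j)-F(\x_j)\mid\mathcal{G}_j]\le C\gamma^3/2$, yields $\E[F(\x_{j+1})-F(\x_j)\mid\mathcal{G}_j]\le -C\gamma^3/2$. (c) NEON returns $0$: the loop stops, Lemma~\ref{lem:NCNmini} certifies $\lambda_{\min}(\nabla^2 F(\y_j))\ge -2\gamma$, and the faithfulness bound gives $\|\nabla F(\y_j)\|\le O(\epsilon)$ --- exactly the claimed output guarantee.

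Combining (a) and (b), every iteration that does not terminate satisfies the uniform drift $\E[F(\x_{j+1})-F(\x_j)\mid\mathcal{G}_j]\le -c_0$ with $c_0:=\min\{\varepsilon(\epsilon,\alpha),\,C\gamma^3/2\}=\Theta(\min\{\varepsilon(\epsilon,\alpha),\gamma^3\})$. Let $\tau$ be the (random) termination index. Then $Y_k:=\big(F(\x_{\min(k,\tau)})-F(\x_*)\big)+c_0\min(k,\tau)$ is a nonnegative supermartingale with $\E[Y_0]=F(\x_1)-F(\x_*)\le\Delta$ by Assumption~\ref{ass:1}(ii), so $c_0\,\E[\min(k,\tau)]\le\Delta$ for all $k$ and, letting $k\to\infty$, $\E[\tau]\le \Delta/c_0=O\big(\max\{1/\varepsilon(\epsilon,\alpha),1/\gamma^3\}\,\Delta\big)$. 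Since iteration $j$ spends one run of $\mathcal{A}$ ($T_a$ IFO calls), one first-order check ($T_c$), and at most one call to NEON ($T_n$, which is $\widetilde O(1/\gamma^{3})$ for NEON and $\widetilde O(1/\gamma^{2.5})$ for NEON$^+$ by Lemma~\ref{lem:NCNmini}), the total oracle cost is $\tau\,(T_a+T_c+T_n)$, which --- once the high-probability upgrade below is in place --- is $\widetilde O\big(\max\{1/\varepsilon(\epsilon,\alpha),1/\gamma^3\}(T_n+T_a+T_c)\big)$, and upon termination the last iterate $\y_j$ satisfies $\|\nabla F(\y_j)\|\le O(\epsilon)$ and $\lambda_{\min}(\nabla^2 F(\y_j))\ge -2\gamma$.

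Two bookkeeping issues remain, and the second is the crux. First, each of the per-iteration events above (faithfulness of the check via Property~\ref{property:1}(3), correctness of NEON via Lemma~\ref{lem:NCNmini}, the termination certificate) fails with some tunable probability $\delta'$; since an a priori bound $K=\widetilde O(\Delta/c_0)$ on the number of iterations is available from the drift argument, I would run every subroutine at confidence $\delta'=\delta/(3K)$ and union bound over the at most $K$ iterations, which only inflates the $\log$ factors hidden in $\widetilde O$. Second --- the main obstacle --- is converting $\E[\tau]\le\Delta/c_0$ into a \emph{high-probability} runtime bound with merely a $\log(1/\delta)$, rather than a $1/\delta$, overhead: Markov's inequality alone gives only $\Pr[\tau\ge K]\le \Delta/(c_0 K)$, so one instead wraps the entire procedure in $m=O(\log(1/\delta))$ independent restarts (each restart terminates within $2\Delta/c_0$ iterations with probability at least $1/2$ by Markov, hence at least one of the $m$ restarts terminates within that budget with probability $1-\delta$), using that any termination occurs at a point satisfying the output guarantee; alternatively one argues directly with a blockwise application of the supermartingale maximal inequality. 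On the intersection of the union-bound event and the ``halts within $K=\widetilde O(\Delta/c_0)$ iterations'' event, NEON-$\mathcal{A}$ stops at some $\y_j$ with $\|\nabla F(\y_j)\|\le O(\epsilon)$ and $\lambda_{\min}(\nabla^2 F(\y_j))\ge -2\gamma$ after at most $K(T_n+T_a+T_c)$ incremental first-order oracle calls, which is the assertion of the theorem.
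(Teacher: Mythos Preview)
Your proposal is correct and follows essentially the same architecture as the paper's proof: establish a per-iteration expected decrease $\E[F(\x_{j+1})-F(\x_j)]\le -\min\{\varepsilon(\epsilon,\alpha),C\gamma^3/2\}$ before termination, then convert the resulting bound on the number of outer iterations into a high-probability statement by a ``halts with probability $\ge 1/2$ per block, so repeat $O(\log(1/\delta))$ blocks'' argument.

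Two places where your write-up differs from the paper are worth noting. First, for the stopping-time bound the paper does not use your supermartingale/optional-stopping formulation; instead it tracks $\E[F(\x_{j+1})I_{\bar{\mathcal E}_j}]-\E[F(\x_j)I_{\bar{\mathcal E}_{j-1}}]$ with the indicator of ``no $(\epsilon,\gamma)$-SSP seen yet'' (the device from \cite{pmlr-v40-Ge15}), telescopes, and shows $\Pr(\bar{\mathcal E}_j)\le 1/2$ once $j\gtrsim (B+\Delta)/\theta$. Your stopped-process argument is a cleaner and equivalent way to reach the same conclusion. Second, to handle the $\delta'$ failure probability of NEON in case~(b) the paper does \emph{not} condition on the good event and union-bound as you do; it instead absorbs the failure directly into the expectation via $\E[\hat\u^\top\nabla^2F(\y_j)\hat\u]\le -c\gamma(1-\delta')+L_1\delta'\le -c\gamma/2$ for $\delta'$ small enough, which sidesteps the delicate ``condition on a future-measurable event'' bookkeeping you flag. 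Both routes are valid; the paper's is slightly more self-contained, yours is more modular. The faithfulness-of-the-check issue you raise (the algorithm branches on $\|\nabla F_{\mathbf S}(\y_j)\|$ while Property~\ref{property:1}(2) is stated for $\|\nabla F(\y_j)\|$) is handled no more carefully in the paper than in your sketch, and is indeed absorbed into constants.
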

Next, we present corollaries of Theorem~\ref{thm:gen} for several instances of $\mathcal A$, including stochastic gradient descent (SGD) method, stochastic momentum (SM) methods,  mini-batch SGD (MSGD), and SCSG. 
SGD and its momentum variants (including stochastic heavy-ball (SHB) method and stochastic Nesterov's accelerated gradient (SNAG) method) are popular stochastic algorithms for solving a stochastic non-convex optimization problem. We will consider them in a unified framework as established in~\citep{yangnonconvexmo}. The updates are 
\begin{equation}\label{eqn:um}
\begin{aligned}
\xh_{\tau+1} &  = \x_\tau - \eta \nabla f(\x_\tau; \xi_{\tau})\\
\xh^s_{\tau+1} & = \x_\tau - s\eta \nabla f(\x_\tau; \xi_{\tau})\\
\x_{\tau+1} & = \xh_{\tau+1} + \beta(\xh^s_{\tau+1} - \xh^s_\tau)
\end{aligned}
\end{equation}
for $\tau=0,\ldots, t$ and $\xh^s_0= \x_0$, where $\beta\in(0,1)$ is a momentum constant, $\eta$ is a step size, $s=0, 1, 1/(1-\beta)$ corresponds to SHB, SNAG and SGD. Define another sequence $\x^+_\tau$ with $\x^+_0=\x_0$ and  
\begin{equation}\label{eqn:um2}
\begin{aligned}
\x^+_{\tau} &  = \x_\tau + \p_\tau, \tau\geq 1,\\
\p_{\tau} & = \frac{\beta}{1-\beta}(\x_\tau - \x_{\tau-1} - s\eta\nabla f(\x_{\tau-1}; \xi_{\tau-1})).
\end{aligned}
\end{equation}
We can implement $\A$ by Algorithm~\ref{alg:sm} and have the following result. 
\begin{cor}\label{thm:SM}
Let $\mathcal A(\x_j)$ be implemented by~Algorithm~\ref{alg:sm} with 
$t=  \Theta(1/\epsilon^{2}), \eta=\Theta(\epsilon^2), \beta\in(0,1), s\in(0, 1/(1-\beta))$. Then $T_a =  O(1/\epsilon^2)$ and $\varepsilon(\epsilon, \alpha)=\Theta(\epsilon^2)$.  
Suppose that $\gamma\geq \epsilon^{2/3}$ and $\E[\|\nabla f(\x; \xi)\|^2]$ is bounded for $s\neq 1/(1-\beta)$. Then with high probability, NEON-SM finds an $(\epsilon, \gamma)$-SPP with a total IFO complexity of $\widetilde O(\max(\frac{1}{\epsilon^2}, \frac{1}{\gamma^3})(T_n + \frac{1}{\epsilon^2}))$. 
\end{cor}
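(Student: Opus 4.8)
The plan is to invoke Theorem~\ref{thm:gen} directly, so the real work is to verify that Algorithm~\ref{alg:sm} instantiates Property~\ref{property:1}(2) with the claimed parameters. First I would recall the unified analysis of SGD/SHB/SNAG from~\citep{yangnonconvexmo}: for the auxiliary sequence $\x^+_\tau$ defined in~(\ref{eqn:um2}), one has a descent-type inequality of the form $\E[F(\x^+_{\tau+1})] \le \E[F(\x^+_\tau)] - \Theta(\eta)\E\|\nabla F(\x_\tau)\|^2 + \Theta(\eta^2 L_1)(\sigma^2 + \text{momentum corrections})$, where the bounded-second-moment hypothesis $\E[\|\nabla f(\x;\xi)\|^2]\le\sigma^2$ (needed only when $s\neq 1/(1-\beta)$, i.e. for the heavy-ball/Nesterov corrections) controls the extra terms coming from $\p_\tau$. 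Telescoping over $\tau=0,\dots,t$ and dividing by $t$, the average squared gradient $\frac1{t+1}\sum_\tau\E\|\nabla F(\x_\tau)\|^2$ is bounded by $O\!\left(\frac{\Delta}{\eta t} + \eta L_1\sigma^2\right)$, and picking a uniformly random index $\tau'$ (as Algorithm~\ref{alg:sm} does in its return statement) converts this into a bound on $\E\|\nabla F(\x_{\tau'})\|^2$.

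Next I would plug in $\eta=\Theta(\epsilon^2)$ and $t=\Theta(1/\epsilon^2)$: then $\frac{\Delta}{\eta t}=\Theta(\Delta)$ — wait, that is not $o(1)$, so the correct reading is that the telescoped bound gives $\E\|\nabla F(\x_{\tau'})\|^2 \le \epsilon^2$ only after the constants are chosen so that $\eta t = \Theta(\Delta/\epsilon^2)$, hence $t=\Theta(1/\epsilon^4)$ per \emph{epoch} would be the naive count; but here $\mathcal A$ is run repeatedly inside the outer loop of Algorithm~\ref{alg:general}, and each call need only produce \emph{one step} of expected decrease. The cleaner way, and the one matching the statement $\varepsilon(\epsilon,\alpha)=\Theta(\epsilon^2)$ and $T_a=O(1/\epsilon^2)$, is: run $t=\Theta(1/\epsilon^2)$ inner iterations; if ever $\|\nabla F(\x_\tau)\|\ge\epsilon$ the per-step decrease $\Theta(\eta\epsilon^2)=\Theta(\epsilon^4)$ accumulated over the $\Theta(1/\epsilon^2)$ steps where the gradient is large yields total decrease $\Omega(\epsilon^2)$, giving the first line of~(\ref{eqn:A}) with $\varepsilon(\epsilon,\alpha)=\Theta(\epsilon^2)$; otherwise every $\x_\tau$ (in particular the returned $\y_j$) has small gradient in expectation and, after absorbing the $O(\eta^2 L_1\sigma^2 t)=O(\epsilon^2)$ error into a sufficiently small constant, the second line $\E[F(\y_j)-F(\x_j)]\le C\gamma^3/2$ holds because $F$ changes by at most $O(\epsilon^2)=O(\gamma^3)$ over the epoch once $\gamma\ge\epsilon^{2/3}$ (so $\gamma^3\ge\epsilon^2$). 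The condition $\gamma\ge\epsilon^{2/3}$ is exactly what makes these two error budgets compatible, and it is why it appears in the hypothesis.

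Having established Property~\ref{property:1}(2) with $T_a=O(1/\epsilon^2)$ and $\varepsilon(\epsilon,\alpha)=\Theta(\epsilon^2)$, and recalling from the Remark after Property~\ref{property:1} that (1) holds with $T_n=\widetilde O(1/\gamma^3)$ (or $\widetilde O(1/\gamma^{2.5})$ for \neonp) via Lemma~\ref{lem:NCNmini} and Lemma~\ref{lemma:NCD:onestep}, and (3) holds with $T_c=\widetilde O(1/\epsilon^2)$ via Lemma~3 in the supplement, I would simply apply Theorem~\ref{thm:gen}: NEON-SM terminates with total IFO complexity $\widetilde O\!\left(\max\!\left(\tfrac{1}{\varepsilon(\epsilon,\alpha)},\tfrac{1}{\gamma^3}\right)(T_n+T_a+T_c)\right)=\widetilde O\!\left(\max\!\left(\tfrac{1}{\epsilon^2},\tfrac{1}{\gamma^3}\right)(T_n+\tfrac{1}{\epsilon^2})\right)$ since $T_a,T_c=\widetilde O(1/\epsilon^2)$, and the output $\y_j$ satisfies $\|\nabla F(\y_j)\|\le O(\epsilon)$, $\lambda_{\min}(\nabla^2 F(\y_j))\ge -2\gamma$, i.e. it is a nearly $(\epsilon,\gamma)$-SSP.

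The main obstacle is the verification of the second alternative in~(\ref{eqn:A}): showing that when the gradient stays below $\epsilon$ throughout the epoch, the function value does not \emph{increase} by more than $C\gamma^3/2$. This is where the variance/momentum error terms $O(\eta^2 L_1 t)$ must be shown to be $\le C\gamma^3/2$, which forces the calibration $\eta=\Theta(\epsilon^2)$, $t=\Theta(1/\epsilon^2)$ together with $\gamma\ge\epsilon^{2/3}$; for $s\neq 1/(1-\beta)$ this additionally needs the $\E[\|\nabla f(\x;\xi)\|^2]$-bounded hypothesis to control the heavy-ball/Nesterov correction $\p_\tau$, and one must be careful that the bound is on the \emph{expected} value change (not high-probability), which is all Theorem~\ref{thm:gen} requires of $\mathcal A$. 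Everything else is bookkeeping of constants and a direct citation.
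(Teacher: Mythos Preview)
Your overall plan is right—verify Property~\ref{property:1}(2) for Algorithm~\ref{alg:sm} and then invoke Theorem~\ref{thm:gen}—and your treatment of the second line of~(\ref{eqn:A}) (variance error $O(\eta^2 t)=O(\epsilon^2)\le C\gamma^3/2$ once $\gamma\ge\epsilon^{2/3}$) matches the paper exactly. The gap is in how you handle the \emph{first} line of~(\ref{eqn:A}).

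You start correctly: the telescoped inequality from~\citep{yangnonconvexmo} plus the uniform-random output gives
\[
\E\|\nabla F(\y_j)\|^2 \;\le\; \frac{c_1\,\E[F(\x_j)-F(\z_j)]}{\eta(t+1)} \;+\; c_2\,D\eta,
\]
with $\z_j=\x^+_{t+1}$. But you then replace $\E[F(\x_j)-F(\z_j)]$ by the global bound $\Delta$, notice this yields $\Theta(\Delta)$ rather than $o(1)$, and switch to a case analysis (``accumulate per-step decrease over the $\Theta(1/\epsilon^2)$ steps where the gradient is large''). That argument does not match what~(\ref{eqn:A}) asks: the hypothesis is that the \emph{single random output} $\y_j$ has $\|\nabla F(\y_j)\|\ge\epsilon$, which says nothing about how many of the $t$ iterates have large gradient, so you cannot accumulate $\Theta(1/\epsilon^2)$ good steps from it.

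The paper's fix is simply not to substitute $\Delta$. Keep $\E[F(\x_j)-F(\z_j)]$ on the right and \emph{rearrange}: when $\|\nabla F(\y_j)\|\ge\epsilon$ the left side is at least $\epsilon^2$, so
\[
\E[F(\z_j)-F(\x_j)]\;\le\;-\,\frac{\eta(t+1)}{c_1}\bigl(\epsilon^2-c_2 D\eta\bigr).
\]
With $\eta=\epsilon^2/(6D)$ and $t+1=\Theta(1/\epsilon^2)$ this is $-\Theta(\epsilon^2)$, giving $\varepsilon(\epsilon,\alpha)=\Theta(\epsilon^2)$ and $T_a=O(1/\epsilon^2)$ directly. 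Everything after that (Property~\ref{property:1}(1),(3) and the call to Theorem~\ref{thm:gen}) is exactly as you describe.
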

{\bf Remark:} When $\gamma=\epsilon^{1/2}$, NEON-SM has an IFO complexity of  $\widetilde O(\frac{1}{\epsilon^4})$.

MSGD computes $(\y_j, \z_j)$ by 
\begin{equation}\label{eqn:SGD}
\begin{aligned}
\z_j &= \x_j - L_1^{-1}\nabla F_{\mathbf S_1}(\x_j),\quad \y_j  =\x_j
\end{aligned}
\end{equation}
 where $\mathbf S_1$ is a set of samples independent of $\x_j$.  
\begin{cor}\label{thm:SGM}
Let $\mathcal A(\x_j)$ be implemented by~(\ref{eqn:SGD}) with 
$|\mathbf S_1|= \widetilde O(1/\epsilon^{2})$. Then $T_a = \widetilde O(1/\epsilon^2)$ and $\varepsilon(\epsilon, \alpha)=\frac{\epsilon^2}{4L_1}$.  
With high probability, NEON-MSGD finds an $(\epsilon, \gamma)$-SPP with a total IFO complexity of $\widetilde O(\max(\frac{1}{\epsilon^2}, \frac{1}{\gamma^3})(T_n + 1/\epsilon^2))$. 
\end{cor}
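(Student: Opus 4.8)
The corollary is essentially a plug-in to Theorem~\ref{thm:gen}, so the plan is to verify that the MSGD oracle defined in~(\ref{eqn:SGD}) satisfies the three requirements of Proposition~\ref{property:1} with the claimed parameters, and then simply read off the complexity from Theorem~\ref{thm:gen}.

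The first step is the descent estimate~(\ref{eqn:A}) for the update $\z_j = \x_j - L_1^{-1}\nabla F_{\mathbf{S}_1}(\x_j)$. Since $\y_j = \x_j$ here, the second line of~(\ref{eqn:A}) is automatic: $\E[F(\y_j) - F(\x_j)] = 0 \le C\gamma^3/2$. For the first line, write $\g = \nabla F(\x_j)$ and $\gh = \nabla F_{\mathbf{S}_1}(\x_j)$. Using $L_1$-smoothness of $F$ (which follows from Assumption~\ref{ass:1}(i)) I would expand
\[
F(\z_j) \le F(\x_j) - \frac{1}{L_1}\g^\top \gh + \frac{1}{2L_1}\|\gh\|^2,
\]
then, on the event $\|\gh - \g\| \le \epsilon/12$ and when $\|\g\| \ge \epsilon$, a short computation bounds the right-hand side by $F(\x_j) - \frac{1}{4L_1}\|\g\|^2 \le F(\x_j) - \frac{\epsilon^2}{4L_1}$; this pins down $\varepsilon(\epsilon,\alpha) = \epsilon^2/(4L_1)$.

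The second step is to size $\mathbf{S}_1$. Because the algorithm draws $\mathbf{S}_1$ afresh and independently of $\x_j$, Assumption~\ref{ass:1}(iii) allows a vector concentration bound on the average of the sub-Gaussian increments $\nabla f(\x_j;\xi) - \nabla F(\x_j)$ (the same inequality underlying Property (3) / Lemma 3 in the supplement): taking $|\mathbf{S}_1| = \widetilde O(G^2/\epsilon^2) = \widetilde O(1/\epsilon^2)$ guarantees $\|\gh - \g\| \le \epsilon/12$ with probability $1-\delta'$. This gives $T_a = |\mathbf{S}_1| = \widetilde O(1/\epsilon^2)$, and the identical argument gives $T_c = \widetilde O(1/\epsilon^2)$ for the first-order check. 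Property (1), with $T_n = \widetilde O(1/\gamma^3)$ for NEON (respectively $\widetilde O(1/\gamma^{2.5})$ for NEON$^+$), is inherited from Lemma~\ref{lem:NCNmini} combined with Lemma~\ref{lemma:NCD:onestep}, exactly as stated in the Remark following Proposition~\ref{property:1}.

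Finally I would substitute $\varepsilon(\epsilon,\alpha) = \Theta(\epsilon^2)$, $T_a = \widetilde O(1/\epsilon^2)$ and $T_c = \widetilde O(1/\epsilon^2)$ into Theorem~\ref{thm:gen}, which immediately yields the total IFO complexity $\widetilde O(\max(1/\epsilon^2, 1/\gamma^3)(T_n + 1/\epsilon^2))$ and the $(\epsilon,\gamma)$-SSP guarantee. The only delicate point is that the per-iteration high-probability events (gradient concentration for $\mathbf{S}_1$, for the first-order check, and success of each NEON call) must hold simultaneously over all $\widetilde O(\max(1/\epsilon^2, 1/\gamma^3))$ outer iterations; a union bound over this many events inflates the various batch sizes only by $\mathrm{polylog}$ factors, which are absorbed into $\widetilde O(\cdot)$. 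I expect this bookkeeping — not any individual inequality — to be the main, and quite mild, obstacle.
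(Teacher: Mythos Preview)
Your overall plan matches the paper's: verify that MSGD satisfies Proposition~\ref{property:1} and then invoke Theorem~\ref{thm:gen}, with the second line of~(\ref{eqn:A}) trivial because $\y_j=\x_j$. The difference is in how you establish the first line of~(\ref{eqn:A}). The paper works \emph{directly in expectation} via the bias--variance identity and Assumption~\ref{ass:1}(iv): from smoothness,
\[
\E[F(\z_j)-F(\x_j)] \;\le\; -\tfrac{1}{2L_1}\|\nabla F(\x_j)\|^2 \;+\; \tfrac{1}{2L_1}\,\E\!\left[\|\nabla F_{\S_1}(\x_j)-\nabla F(\x_j)\|^2\right]
\;\le\; -\tfrac{1}{2L_1}\|\nabla F(\x_j)\|^2 + \tfrac{V}{2L_1|\S_1|},
\]
so $|\S_1|\ge 2V/\epsilon^2$ already gives $\varepsilon(\epsilon,\alpha)=\epsilon^2/(4L_1)$ with no conditioning event and no union bound over outer iterations for this step.

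Your argument instead conditions on the high-probability event $\|\gh-\g\|\le \epsilon/12$ via Assumption~\ref{ass:1}(iii). That yields a \emph{pointwise} descent inequality on a good event, but Property~(2) of Proposition~\ref{property:1} --- the hypothesis actually used in the proof of Theorem~\ref{thm:gen} --- is stated in expectation. As written you cannot plug in: you would need either to bound the bad event's contribution to $\E[F(\z_j)-F(\x_j)]$ (doable, since the sub-Gaussian tail controls $\|\gh-\g\|^2$), or to rework Theorem~\ref{thm:gen} under per-step high-probability hypotheses. The cleanest fix is simply to take the expectation route: Assumption~\ref{ass:1}(iii) already implies $\E\|\gh-\g\|^2 = O(G^2/|\S_1|)$, and then the paper's three-line computation above delivers the required bound directly, after which your final paragraph applies verbatim.
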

{\bf Remark:} Compared to Corollary~\ref{thm:SM}, there is no requirement on $\gamma\geq\epsilon^{2/3}$, which is due to that MSGD can guarantee  that $\E[F(\y_j) - F(\x_j)]\leq 0$. 


SCSG was proposed in~\citep{DBLP:journals/corr/LeiJCJ17}, which only provides a first-order convergence guarantee. SCSG runs with multiple epochs, and each epoch uses similar updates as SVRG with three distinct features: (i) it was applied to a sub-sampled function $F_{\mathbf S_1}$; (ii) it allows for using a mini-batch samples of size $b$ independent of $\mathbf S_1$ to compute stochastic gradients;  (ii) the number of updates of each epoch is a random  number following a geometric distribution dependent on $b$ and $|\mathbf S_1|$. These features make each SGCG epoch denoted by SCSG-epoch$(\x, \mathbf S_1, b)$ have an expected IFO complexity of $T_a=O(|\mathbf S_1|)$. Due to limitation of space, we present  SCSG-epoch$(\x, \mathbf S_1, b)$ in the supplement. For using SCSG, $\y_j$ and $\z_j$ are 
\begin{equation}\label{eqn:SCSG}
\begin{aligned}
\y_j &=\text{SCSG-epoch}(\x_j, \mathbf S_1, b),\quad\z_j  =\y_j
\end{aligned}
\end{equation}

\begin{cor}\label{thm:SCSG}
Let $\mathcal A(\x_j)$ be implemented by~(\ref{eqn:SCSG}) 
with $|\mathbf S_1| =  \widetilde O\left(\max(1/\epsilon^{2}, 1/(\gamma^{9/2}b^{1/2}))\right)$. Then $\varepsilon(\epsilon, \alpha)=\Omega(\epsilon^{4/3}/b^{1/3})$ and $\E[T_a] = \widetilde O\left(\max(1/\epsilon^{2}, 1/(\gamma^{9/2}b^{1/2}))\right)$.  
With high probability, NEON-SCSG finds an $(\epsilon, \gamma)$-SSP with an expected total IFO complexity of $\widetilde O(\max(\frac{b^{1/3}}{\epsilon^{4/3}}, \frac{1}{\gamma^3})(T_n + 1/\epsilon^{2} +  1/(\gamma^{9/2}b^{1/2})))$. 
\end{cor}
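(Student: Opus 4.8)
The plan is to instantiate Property~\ref{property:1} with $\mathcal A$ taken to be one SCSG epoch and then apply Theorem~\ref{thm:gen}. Property~(1) is already supplied by Lemma~\ref{lem:NCNmini} together with Lemma~\ref{lemma:NCD:onestep}, which give $T_n=\widetilde O(1/\gamma^3)$ for NEON (or $\widetilde O(1/\gamma^{2.5})$ for NEON$^+$), and Property~(3) holds with $T_c=\widetilde O(1/\epsilon^2)$ under Assumption~\ref{ass:1}(iii), as noted in the remark after Property~\ref{property:1}. Hence the only substantive task is to verify Property~(2) for SCSG-epoch, i.e., to produce the two implications in~(\ref{eqn:A}) with an explicit $\varepsilon(\epsilon,\alpha)$, together with the expected cost $\E[T_a]$ of a single epoch.

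First I would invoke the one-epoch guarantee of SCSG~\citep{DBLP:journals/corr/LeiJCJ17}, which, under Assumption~\ref{ass:1}(iv) and with the step size prescribed inside SCSG-epoch, states that for $\y_j=\z_j=$ SCSG-epoch$(\x_j,\mathbf S_1,b)$,
\begin{align*}
\E[\|\nabla F(\y_j)\|^2]\le \frac{c_1 b^{1/3}}{|\mathbf S_1|^{1/3}}\,\E[F(\x_j)-F(\y_j)]+\frac{c_2 V}{|\mathbf S_1|}
\end{align*}
for absolute constants $c_1,c_2$, while the epoch length is geometric with mean $|\mathbf S_1|/b$ and each inner iteration costs $O(b)$ IFO, so $\E[T_a]=O(|\mathbf S_1|)$. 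Rearranging and using $\z_j=\y_j$ gives
\begin{align*}
\E[F(\z_j)-F(\x_j)]\le -\frac{|\mathbf S_1|^{1/3}}{c_1 b^{1/3}}\,\E\|\nabla F(\y_j)\|^2+\frac{c_2 V}{c_1 b^{1/3}|\mathbf S_1|^{2/3}}.
\end{align*}

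Next I would pick $|\mathbf S_1|=\widetilde O(\max(1/\epsilon^2,\,1/(\gamma^{9/2}b^{1/2})))$ and tune the hidden constant so that simultaneously (i)~$c_2 V/|\mathbf S_1|\le \epsilon^2/2$ and (ii)~$c_2 V/(c_1 b^{1/3}|\mathbf S_1|^{2/3})\le C\gamma^3/2$. Requirement~(ii) is precisely what forces the $1/(\gamma^{9/2}b^{1/2})$ term, and plugged into the second display it yields $\E[F(\y_j)-F(\x_j)]\le C\gamma^3/2$, the second line of~(\ref{eqn:A}). For the first line, when $\|\nabla F(\y_j)\|\ge\epsilon$ the squared-gradient term is at least $\epsilon^2$ and, by~(i), at least doubles the residual variance, so $\E[F(\z_j)-F(\x_j)]\le -|\mathbf S_1|^{1/3}\epsilon^2/(2c_1 b^{1/3})\le -\Omega(\epsilon^{4/3}/b^{1/3})$ because $|\mathbf S_1|\ge 1/\epsilon^2$; thus $\varepsilon(\epsilon,\alpha)=\Omega(\epsilon^{4/3}/b^{1/3})$ with $\alpha=b$, and $\E[T_a]=O(|\mathbf S_1|)=\widetilde O(\max(1/\epsilon^2,1/(\gamma^{9/2}b^{1/2})))$. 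Feeding $1/\varepsilon(\epsilon,\alpha)=O(b^{1/3}/\epsilon^{4/3})$, $T_a$, $T_c=\widetilde O(1/\epsilon^2)$ and $T_n$ into Theorem~\ref{thm:gen}, its bound $\widetilde O(\max(1/\varepsilon(\epsilon,\alpha),1/\gamma^3)(T_n+T_a+T_c))$ collapses to the claimed $\widetilde O(\max(b^{1/3}/\epsilon^{4/3},1/\gamma^3)(T_n+1/\epsilon^2+1/(\gamma^{9/2}b^{1/2})))$ after absorbing $T_c$ into the $1/\epsilon^2$ term; the high-probability $(\epsilon,\gamma)$-SSP conclusion on $\y_j$ is inherited directly from Theorem~\ref{thm:gen}, and the complexity is reported in expectation because the SCSG epoch length is a random geometric variable.

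I expect the main obstacle to be the calibration of $|\mathbf S_1|$: one value must make the residual variance term small in two quantitatively different senses at once --- small relative to $\epsilon^2$ for the ``descent when the gradient is large'' branch of~(\ref{eqn:A}), and small relative to $\gamma^3$ after the extra $|\mathbf S_1|^{1/3}/b^{1/3}$ amplification for the ``no increase when the gradient is small'' branch --- and one must reconcile SCSG's unconditional control of $\E\|\nabla F(\y_j)\|^2$ with the conditional, implication-style form of~(\ref{eqn:A}), which proceeds exactly as the epoch-averaging argument in the original SCSG analysis. Re-deriving the one-epoch SCSG inequality with explicit constants is routine but is where all the exponents in $|\mathbf S_1|$, $\varepsilon(\epsilon,\alpha)$ and the final complexity ultimately come from.
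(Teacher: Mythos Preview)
Your proposal is correct and follows essentially the same approach as the paper: invoke the one-epoch SCSG inequality $\E[\|\nabla F(\y_j)\|^2]\le \frac{5L_1 b^{1/3}}{c'|\mathbf S_1|^{1/3}}\E[F(\x_j)-F(\y_j)]+\frac{6V}{|\mathbf S_1|}$, rearrange it, and calibrate $|\mathbf S_1|$ against two thresholds---one (of order $1/\epsilon^2$) to make the variance term negligible relative to $\epsilon^2$ in the large-gradient branch, the other (of order $1/(\gamma^{9/2}b^{1/2})$) to make the residual $\frac{c_2 V}{c_1 b^{1/3}|\mathbf S_1|^{2/3}}$ at most $C\gamma^3/2$ in the small-gradient branch---then read off $\varepsilon(\epsilon,\alpha)=\Omega(\epsilon^{4/3}/b^{1/3})$ and plug into Theorem~\ref{thm:gen}. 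The paper's proof is organized identically (it just works the two cases in the opposite order), and your identification of the conditional/unconditional tension in~(\ref{eqn:A}) is a point the paper also leaves at the same informal level.
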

{\bf Remark:} When $\gamma=\epsilon^{1/2}, b=1/\sqrt{\epsilon}$, NEON-SCSG has an expected IFO complexity of  $\widetilde O(\frac{1}{\epsilon^{3.5}})$. When $\gamma\geq \epsilon^{4/9}, b=1$,  NEON-SCSG has an expected IFO complexity of  $\widetilde O(1/\epsilon^{3.33})$.

Finally, we mention that the proposed NEON can be used in existing second-order stochastic algorithms that require a NC direction as a substitute of second-order methods~\citep{natasha2, reddi2017generic}. \citet{natasha2} developed Natasha2, which uses second-order online Oja's algorithm for finding the NC.  \citet{reddi2017generic} developed a stochastic algorithm for solving a finite-sum problem by using SVRG and a second-order stochastic algorithm for computing the NC.   
We can replace the second-order methods for computing a NC in these algorithms by the proposed NEON or NEON$^+$, with the resulting algorithms referred to as NEON-Natasha and NEON-SVRG. 
It is a simple exercise to derive the convergence results in Table~\ref{tab:2}, which is left to interested readers. 

\section{Conclusions}
We have proposed novel first-order procedures to extract  NC from a Hessian matrix by using a noise-initiated sequence, which are of independent interest. A general framework for promoting a first-order stochastic algorithm to enjoy a second-order convergence is also proposed. Based on the proposed general framework, we designed several first-order stochastic algorithms with state-of-the-art second-order convergence guarantee.  

\bibliography{all,ref}
\bibliographystyle{icml2018}

\appendix
\section{Proof of Lemma~\ref{lemma:NCD:onestep}}
\begin{proof}
Let $\eta = \frac{c\gamma}{L_2} \text{sign}(\v^\top\nabla f(\x))$ be the step size, so that $\x_+ = \x - \eta \v$. By the $L_2$-Lipschitz continuous Hessian of $f(\x)$, we have
\begin{align*}
|f(\x_+)-f(\x)  +  \eta\v^\top\nabla f(\x)-\frac{1}{2}\eta^2\v^\top\nabla^2f(\x)\v| \leq\frac{L_2}{6}\|\eta\v\|^3.
\end{align*}
By noting that $\eta\v^\top\nabla f(\x)\geq 0$, we have
\begin{align*}
    f(\x)- f(\x_+) \geq & \eta\v^\top\nabla f(\x) -\frac{1}{2}\eta^2\v^\top\nabla^2f(\x)\v - \frac{L_2}{6}\|\eta\v\|^3 \\
    \geq & \frac{c^3\gamma^3}{2L_2^2} - \frac{c^3\gamma^3}{6L_2^2} = \frac{c^3\gamma^3}{3L_2^2},
\end{align*}
where the last inequality uses that $\v^{\top}\nabla^2 f(\x)\v\leq - c\gamma$, $\|\v\| = 1$, and the definition of $\eta$. 

For the case of $\x'_+$, we have $\x_+' = \x - \eta \v$, where $\eta =\bar\xi\frac{c\gamma}{L_2}$. 
Similarly, we can prove the expectation result of $\x_+'$. Then 
\begin{align*}
    \E[f(\x)- f(\x_+)]
    \geq & \E[\eta\v^\top\nabla f(\x) -\frac{1}{2}\eta^2\v^\top\nabla^2f(\x)\v - \frac{L_2}{6}\|\eta\v\|^3] \\
    \geq& \frac{c^3\gamma^3}{2L_2^2} - \frac{c^3\gamma^3}{6L_2^2} = \frac{c^3\gamma^3}{3L_2^2},
\end{align*}
where we use $\E[\eta]=0$, $\E[\eta^2]=\frac{c^2\gamma^2}{L_2^2}$,  $\E[|\eta|^3]=\frac{c^3\gamma^3}{L_2^3}$, and $\|\v\| = 1$. 
\end{proof}

\section{Concentration inequalities}
We first present some concentration inequalities of random vectors and random matrices. 
Below, we let $\S_1$ and $\S_2$ to denote a set of random samples that are generated independently of $\x$. 
\begin{lemma}[\citep{DBLP:journals/mp/GhadimiLZ16}, Lemma 4]\label{lemma:concen:grad}
Suppose Assumption~\ref{ass:1}(iii) holds. Let $\nabla F_{\S_1}(\x)= \frac{1}{|\S_1|} \sum_{\z_i\in \S_1}\nabla f(\x; \z_i)$. For any $\epsilon, \delta \in (0,1)$, 
$\x\in\R^d$ when $|\S_1 |\geq \frac{2G^2(1+8\log(1/\delta))}{\epsilon^2}$, we have $\Pr(\|\nabla F_{\S_1}(\x) - \nabla F(\x)\|\leq \epsilon ) \geq 1-\delta$.
\end{lemma}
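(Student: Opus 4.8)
\textbf{Proof proposal for Lemma~\ref{lemma:concen:grad}.}

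The plan is to apply a vector-valued concentration (Bernstein-type) inequality to the i.i.d.\ sum $\nabla F_{\S_1}(\x) - \nabla F(\x) = \frac{1}{|\S_1|}\sum_{\z_i\in\S_1}\left(\nabla f(\x;\z_i) - \nabla F(\x)\right)$, whose summands are mean-zero vectors in $\R^d$ with sub-Gaussian tails controlled by $G$ via Assumption~\ref{ass:1}(iii). The first step is to record that $\mathbb{E}[\nabla f(\x;\z_i)] = \nabla F(\x)$, so each centered summand $\mathbf{d}_i := \nabla f(\x;\z_i) - \nabla F(\x)$ is mean zero, and that the assumed bound $\mathbb{E}[\exp(\|\mathbf{d}_i\|^2/G^2)]\leq \exp(1)$ is precisely a sub-Gaussian norm condition, which in particular gives moment bounds $\mathbb{E}[\|\mathbf{d}_i\|^k] \leq C_k G^k$ for all $k$. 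Since $\x$ is fixed (independent of $\S_1$), the $\mathbf{d}_i$ are genuinely i.i.d.\ conditionally on $\x$, which is what lets us invoke a clean concentration statement.

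Second, I would invoke the vector Bernstein / Pinelis-type inequality (this is exactly the content of Lemma~4 in \citet{DBLP:journals/mp/GhadimiLZ16}, which the lemma statement credits): for an i.i.d.\ mean-zero sequence with the above sub-Gaussian control, one has with probability at least $1-\delta$ that $\left\|\frac{1}{|\S_1|}\sum_i \mathbf{d}_i\right\| \leq G\sqrt{\frac{2(1+8\log(1/\delta))}{|\S_1|}}$ (the precise constants $2$ and $8$ are exactly those appearing in the sample-size threshold). Third, one simply rearranges: demanding the right-hand side be $\leq \epsilon$ is equivalent to $|\S_1| \geq \frac{2G^2(1+8\log(1/\delta))}{\epsilon^2}$, which is the stated hypothesis, and the conclusion $\Pr(\|\nabla F_{\S_1}(\x) - \nabla F(\x)\|\leq\epsilon)\geq 1-\delta$ follows immediately.

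Since the statement is quoted verbatim from \citep{DBLP:journals/mp/GhadimiLZ16}, the honest proof is a one-line citation; if a self-contained argument is wanted, the only nontrivial ingredient is the dimension-free tail bound for sums of sub-Gaussian random vectors. The main obstacle, such as it is, lies there: getting a bound with \emph{no} explicit dependence on $d$ requires a genuinely vector-valued argument (e.g.\ via the smoothness of the norm on Hilbert space, or a symmetrization plus Gaussian-comparison argument), rather than a naive coordinatewise union bound which would cost an extra $\log d$ or $\sqrt{d}$ factor. I would therefore either cite the result as stated, or reproduce the Pinelis martingale/Banach-space argument, checking that the sub-Gaussian hypothesis in Assumption~\ref{ass:1}(iii) supplies the requisite uniform control on the increments.
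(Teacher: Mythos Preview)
Your proposal is correct and matches the paper's treatment exactly: the paper does not prove this lemma at all but simply states it with the citation to \citet{DBLP:journals/mp/GhadimiLZ16}, so your observation that ``the honest proof is a one-line citation'' is precisely what the authors do. Your additional outline of the underlying Pinelis-type vector concentration argument is accurate and more than the paper itself provides.
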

\begin{lemma}[\citep{peng16inexacthessian}, Lemma 4]\label{lemma:concen:hessian}
Suppose Assumption~\ref{ass:1}(i) holds. Let $\nabla^2F_{\S_2}(\x) = \frac{1}{|\S_2|} \sum_{\z_i\in \S_2}\nabla^2 f(\x; \z_i)$. For any $\epsilon, \delta \in (0,1), \x\in\R^d$,  when $|\S_2 |\geq  \frac{16L_1^2 \log(2d/\delta)}{\epsilon^2}$, we have $\Pr(\|\nabla^2F_{\S_2}(\x) - \nabla^2 F(\x)\|_2\leq \epsilon ) \geq 1-\delta$.
\end{lemma}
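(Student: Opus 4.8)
The plan is to recognize this as a direct application of the matrix Bernstein inequality to an i.i.d.\ sum of centered Hessians; this is exactly Lemma~4 of \citet{peng16inexacthessian}, and I only need to recall the route. First I would note that Assumption~\ref{ass:1}(i) delivers more than Lipschitz continuity of $\nabla f(\cdot;\xi)$: for a twice-differentiable function whose gradient is $L_1$-Lipschitz one has $\|\nabla^2 f(\x;\xi)\|_2\le L_1$ at every $\x$ and (almost) every $\xi$, and therefore also $\|\nabla^2 F(\x)\|_2=\|\E_\xi[\nabla^2 f(\x;\xi)]\|_2\le L_1$, where interchanging $\E_\xi$ and the second derivative is justified by the same uniform bound through dominated convergence.

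Next I would introduce, for $\z_i\in\S_2$, the centered matrices $X_i=\nabla^2 f(\x;\z_i)-\nabla^2 F(\x)$. Since $\S_2$ is drawn independently of $\x$, the $X_i$ are independent symmetric $d\times d$ random matrices with $\E[X_i]=0$ and, by the previous paragraph, $\|X_i\|_2\le 2L_1$ almost surely, and their normalized sum is precisely $\frac{1}{|\S_2|}\sum_i X_i=\nabla^2 F_{\S_2}(\x)-\nabla^2 F(\x)$. Because the assumptions provide no separate second-moment hypothesis on the Hessian, I would control the matrix variance by the crude bound $\nu:=\bigl\|\sum_i\E[X_i^2]\bigr\|_2\le |\S_2|\,(2L_1)^2$; this worst-case estimate is exactly what pins the final sample size to $L_1^2$ rather than to a smaller variance proxy.

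Then I would invoke the matrix Bernstein inequality applied to $\sum_i X_i=|\S_2|\bigl(\nabla^2 F_{\S_2}(\x)-\nabla^2 F(\x)\bigr)$: for any $t>0$,
\begin{align*}
\Pr\!\left(\bigl\|\nabla^2 F_{\S_2}(\x)-\nabla^2 F(\x)\bigr\|_2\ge t\right)
=\Pr\!\left(\Bigl\|\sum_i X_i\Bigr\|_2\ge |\S_2|\,t\right)
\le 2d\,\exp\!\left(\frac{-|\S_2|^2 t^2/2}{\nu+(2L_1/3)\,|\S_2|\,t}\right).
\end{align*}
Setting $t=\epsilon$, using $\nu\le 4L_1^2|\S_2|$, and using $\epsilon\le 1\le L_1$ (so the $t$-dependent term in the denominator is absorbed into a constant multiple of $L_1^2$) collapses the exponent to $-c\,|\S_2|\epsilon^2/L_1^2$ for an absolute constant $c$; requiring $2d\,\exp(-c\,|\S_2|\epsilon^2/L_1^2)\le\delta$ then gives $|\S_2|\ge \frac{16L_1^2\log(2d/\delta)}{\epsilon^2}$ once the constants in the chosen form of matrix Bernstein are made explicit, and the $2d$ factor in the tail is exactly the $2d$ appearing inside the logarithm.

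There is no real obstacle here; the work is bookkeeping. The two points I would be most careful about are (i) landing the constant exactly at $16$, which forces me to commit to one explicit statement of matrix Bernstein and to use the worst-case variance bound $\nu\le|\S_2|\,\|X_i\|_2^2$ rather than any sharper estimate, and (ii) cleanly deriving $\E_\xi[\nabla^2 f(\x;\xi)]=\nabla^2 F(\x)$ together with the uniform bound $\|\nabla^2 f(\x;\xi)\|_2\le L_1$ from Assumption~\ref{ass:1}(i); everything else is the standard concentration argument for sums of bounded independent matrices.
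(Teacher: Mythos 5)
The paper does not prove this lemma at all: it is imported verbatim as Lemma~4 of \citet{peng16inexacthessian}, so there is no internal proof to compare against. Your matrix-Bernstein argument (center the sampled Hessians, use the uniform bound $\|\nabla^2 f(\x;\xi)\|_2\le L_1$ from the $L_1$-Lipschitz gradient, bound the variance crudely by $|\S_2|(2L_1)^2$, and tune constants) is exactly the standard route used for this kind of sub-sampled Hessian concentration bound and is correct in substance. The only small caveat is your step ``$\epsilon\le 1\le L_1$'': the assumptions do not guarantee $L_1\ge 1$, but this is harmless since for $\epsilon\ge 2L_1$ the event holds deterministically ($\|\nabla^2 F_{\S_2}(\x)-\nabla^2 F(\x)\|_2\le 2L_1$), so you may restrict to $\epsilon\le 2L_1$ and absorb the Bernstein linear term into a constant multiple of $L_1^2$, which still yields the stated constant $16$.
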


{\bf Claim}. In the following analysis, when we say high probability, it means there is a probability $1-\delta$ with a small enough $\delta<0$.  In many cases, we prove an inequality for one iteration with high probability, which implies the final result with high probability using union bound with a finite number of iterations. Instead of repeating this argument, we will simply assume this is done. We can always set the $\delta$ in the  involved parameters (in the logarithmic part) small enough to make our argument fly.

\section{Proof of Lemma~\ref{lem:NCNmini}}
Due to that the proof of Theorem \ref{thm:main:GD} and Theorem \ref{thm:main:AGD} is lengthy, we postpone them into the end of the supplement.
\begin{proof}
By using a matrix concentration inequality in Lemma~\ref{lemma:concen:hessian}, if $m \geq  \frac{16L_1^2 \log(6d/\delta)}{s^2\gamma^2}$,  with probability $1-\delta/3$ we have
\begin{align*}
\|\nabla^2 F(\x) - \nabla^2 F_\S(\x)\|_2\leq s\gamma.
\end{align*}
Since $\|L_1I - \nabla^2 F(\x)\|_2= L_1 - \lambda_{\min}(\nabla^2 F(\x))$ and $\|L_1I - \nabla^2 F_\S(\x)\|_2= L_1 - \lambda_{\min}(\nabla^2 F_\S(\x))$, if $\lambda_{\min}(\nabla^2 F(\x))\leq -\gamma$, with probability $1-\delta/3$,
\begin{align*}
(L_1 - \lambda_{\min}(\nabla^2 F(\x))) - (L_1 - \lambda_{\min}(\nabla^2 F_\S(\x)))
\leq& \|(L_1I - \nabla^2 F(\x)) -(L_1I - \nabla^2 F_\S(\x)) \|_2 \leq s\gamma.
\end{align*}
As a result, with probability $1 -\delta/3$, $\lambda_{\min}(\nabla^2 F_\S(\x))\leq -\gamma + s\gamma\leq  - 3\gamma/4$ with $s = \frac{\log^{-1}(3dL_1/(2\gamma\delta))}{(12
\hat c)^2}\leq 1/4$. 

(i) The NEON applied to $F_\S$ can generate $\u_\S$ with probability $1-2\delta/3$ (over randomness in $\S$ and NEON) such that 
\begin{align*}
 \frac{\u_\S^{\top}\nabla^2 F_\S(\x)\u_\S}{\|\u_\S\|^2}\leq &\frac{-2\mathcal F'}{(4\hat c\mathcal P')^2} =  \frac{-\gamma}{8\hat c^2 \log(3dL_1 /(2\gamma\delta))}
\leq  \frac{-\gamma}{72\hat c^2 \log(3dL_1 /(2\gamma\delta))} \leq -\widetilde \Omega(\gamma),
\end{align*}
where $\mathcal F'=\eta  L_1 \gamma^3 L_2^{-2} \log^{-3}(3dL_1 /(2\gamma\delta))$ and $\mathcal P'=\sqrt{\eta L_1}  \gamma L_2^{-1} \log^{-1}(3dL_1 /(2\gamma\delta))$. 

(ii) Similarly, the NEON$^+$ applied to $F_\S$ can generate $\u_\S$ with probability $1-2\delta/3$ (over randomness in $\S$ and NEON$^+$) such that 
\begin{align*}
& \frac{\u_\S^{\top}\nabla^2 F_\S(\x)\u_\S}{\|\u_\S\|^2} \leq \frac{ -\gamma}{72\hat c^2 \log(3dL_1 /(2\gamma\delta))} \leq -\widetilde \Omega(\gamma). 
\end{align*}

As a result, both for NEON and NEON$^+$, with  probability $1-\delta$ (over randomness in $\S$ and NEON or NEON$^+$), 
\begin{align*}
 \left|\frac{\u_\S^{\top}\nabla^2 F(\x)\u_\S}{\|\u_\S\|^2} - \frac{\u_\S^{\top}\nabla^2 F_\S(\x)\u_\S}{\|\u_\S\|^2}\right|
\leq \|\nabla^2 F(\x) - \nabla^2F_\S(\x)\|_2 \leq s\gamma.
\end{align*}
Hence,
\begin{align*}
 \frac{\u_\S^{\top}\nabla^2 F(\x)\u_\S}{\|\u_\S\|^2} \leq - \frac{\gamma}{72\hat c^2 \log(3dL_1 /(2\gamma\delta))} + s\gamma = - c\gamma ,
\end{align*}
where $s=\frac{\log^{-1}(3dL_1/(2\gamma\delta))}{(12
\hat c)^2}$. 
If NEON or NEON$^+$ returns $0$, we then terminate the algorithm, which guarantees that $\lambda_{\min}(F_\S(\x))\geq -\gamma$ with high probability and therefore $-\lambda_{\min}(\nabla^2 F(\x))\leq \gamma + s\gamma \leq  2\gamma$ with high probability.
\end{proof}

\section{Proof of Theorem~\ref{thm:gen}}
\begin{proof}
To prove the convergence of the generic algorithm, we need to prove the total number of iterations NEON-$\mathcal A$ before termination. Upon  termination, it then holds that $\|\nabla F_{\S_1}(\y_j)\|\leq \epsilon$, $\lambda_{\min}(\nabla^2 F_{\S_2}(\x_j))\geq -\gamma$. By concentration inequalities, we have $\lambda_{\min}(\nabla^2 F(\y_j))\geq - 2\gamma$ and $\|\nabla F(\y_j)\|\leq O(\epsilon)$ hold with high probability. Before termination, let us consider 
two cases based on the first-order condition at point $\y_j$: (1) $\|\nabla F(\y_j)\|\geq \epsilon$ and (2) $\|\nabla F(\y_j)\| < \epsilon$. 

For the first case, by (\ref{eqn:A}) we have 
\begin{align*}
 \E[F(\z_j) - F(\x_j)]\leq - \varepsilon(\epsilon, \alpha).
\end{align*}
Since the the first-oder condition of $\y_j$ not met in this case, then $\x_{j+1} = \z_j$, so that
\begin{align}\label{inqe:case:1}
 \E[F(\x_{j+1}) - F(\x_j)]\leq - \varepsilon(\epsilon, \alpha).
\end{align}
For the second case, by (\ref{eqn:A}) we have
\begin{align*}
\E[F(\y_j) - F(\x_j)]\leq \frac{C\gamma^3}{2}
\end{align*}
Before termination, NEON returns $\u_j \neq 0$, then it satisfies $\frac{\u_j^{\top}\nabla^2F(\y_j)\u_j}{\|\u_j\|^2}\leq - c\gamma $ with high probability according to Lemma~\ref{lemma:NCD:onestep} and Lemma~\ref{lem:NCNmini}, i.e., $\Pr\left(\frac{\u_j^{\top}\nabla^2 F(\y_j)\u_j}{\|\u_j\|^2}\leq -c\gamma\right)\geq 1 - \delta$.  Similar to the analysis in Lemma~\ref{lemma:NCD:onestep}, we have
\begin{align*}
\E[F(\y_j  + c\gamma \bar \xi  \hat\u/L_2) - F(\y_j)] 
\leq \E\left[\frac{1}{2}\eta^2\hat\u^\top\nabla^2 F(\y_j)\hat\u  +  \frac{L_2}{6}\|\eta\hat \u\|^3\right],
\end{align*}
where $\hat\u = \u_j/\|\u_j\|$ and $\eta= c\gamma \bar \xi/L_2$. 
\begin{align*}
& \E[\hat\u^\top\nabla^2 F(\y_j)\hat\u]\\
=& \E[\hat\u^\top\nabla^2 F(\y_j)\hat\u|\hat\u^\top\nabla^2 F(\y_j)\hat\u\leq -c\gamma] \cdot \Pr(\hat\u^\top\nabla^2 F(\y_j)\hat\u\leq -c\gamma) \\
&+ \E[\hat\u^\top\nabla^2 F(\y_j)\hat\u|\hat\u^\top\nabla^2 F(\y_j)\hat\u>-c\gamma]\cdot \Pr(\hat\u^\top\nabla^2 F(\y_j)\hat\u>-c\gamma)\\
\leq& -c\gamma\Pr(\hat\u^\top\nabla^2 F(\y_j)\hat\u\leq -c\gamma) + L_1\delta \\
\leq &- c\gamma (1-\delta) + L_1\delta = -c\gamma + (c\gamma + L_1)\delta.
\end{align*} 
With $\delta\leq c\gamma/2(c\gamma + L_1)$, we have $\E[\hat\u^\top\nabla^2 F(\y_j)\hat\u] \leq - c\gamma/2$. As a result, 
\begin{align*}
\E[F(\x_{j+1}) - F(\y_j)] \leq \E\left[\frac{1}{2}\eta^2\hat\u^\top\nabla^2 F(\y_j)\hat\u  +  \frac{L_2}{6}\|\eta\hat \u\|^3\right] \leq - \frac{c^3\gamma^3}{12L_2^2}
\end{align*}
By selecting $C$ such that $C < \frac{c^3}{6L_2^2}$, we will have
\begin{align}\label{inqe:case:2}
\E[F(\x_{j+1}) - F(\x_j)]\leq - C\gamma^3/2.
\end{align}
By (\ref{inqe:case:1}) and (\ref{inqe:case:2}) we get
\begin{align}\label{inqe:case:1:2}
\E[F(\x_{j+1}) - F(\x_j)]\leq - \underbrace{\min(\varepsilon(\epsilon,\alpha), C\gamma^3/2)}\limits_{\theta}.
\end{align}

Next, we will show within $\widetilde O\left(\max\left(\frac{1}{\varepsilon(\epsilon,\alpha)}, \frac{1}{\gamma^3}\right)\log(1/\zeta)\right)$ outer iterations  of NEON-$\mathcal A$, there exists at least on  $\y_j$ such that $\lambda_{\min}(\nabla^2 F(\y_j))\geq -\gamma$ and $\|\nabla F(\y_j)\|\leq \epsilon$ with high probability. This analysis is similar to that of Theorem 14 in~\cite{pmlr-v40-Ge15}. As a result, at such a $\y_j$ NEON-$\mathcal A$ terminates with a high probability. 
Let us consider three cases:
\begin{align*}
\mathcal C_1 =\{& \y_j | \|\nabla F(\y_j)\|\geq\epsilon \text{ for some } j>0\}\\
\mathcal C_2 =\{& \y_j |\|\nabla F(\y_j)\|\leq \epsilon \text{ and } \lambda_{\min}(\nabla^2 F(\y_j))\leq -\gamma  \text{ for some } j>0\}\\
\mathcal C_3 =\{& \y_j |\|\nabla F(\y_j)\|\leq \epsilon \text{ and } \lambda_{\min}(\nabla^2 F(\y_j))\geq -\gamma  \text{ for some } j>0\}
\end{align*}
Clearly, Case 3 is our favorable case, thus we need to carefully study the occurrences of Cases 1 and 2. 
Let us define an event $\mathcal E_j =\{ \exists i\leq j, \y_i \in \mathcal C_3 \}$, and then $\bar{\mathcal E}_j =\{ \forall i\leq j, \y_i \notin \mathcal C_3 \}$. It is easy to show that $Pr({\bar{\mathcal E}_{j}})\leq Pr({\bar{\mathcal E}_{j-1}})$. Then we have 
\begin{align*}
&\E [F(\x_{j+1}) I_{\bar{\mathcal E}_{j}}] - \E [F(\x_j) I_{\bar{\mathcal E}_{j-1}}] 
=\E [F(\x_{j+1}) - F(\x_j) | {\bar{\mathcal E}_{j}}] Pr(\bar{\mathcal E}_{j}) 
+  \E [F(\x_j) I_{\bar{\mathcal E}_{j}}] - \E [F(\x_j) I_{\bar{\mathcal E}_{j-1}}]
\end{align*}
It is easy to bound the first term in R.H.S by $\E [F(\x_{j+1}) - F(\x_j) | {\bar{\mathcal E}_{j}}] Pr(\bar{\mathcal E}_{j})\leq  -\theta Pr(\bar{\mathcal E}_{j}).$ To bound the second term, we need following two results.

(a) Let consider different situations of $ I_{\bar{\mathcal E}_{j}}$ and $ I_{\bar{\mathcal E}_{j-1}}$.
\begin{itemize}
\item{ If $ I_{\bar{\mathcal E}_{j}} = 1$, then $ I_{\bar{\mathcal E}_{j-1}}=1$, so that
\begin{align*}
&\E [F(\x_j) I_{\bar{\mathcal E}_{j}}] - \E [F(\x_j) I_{\bar{\mathcal E}_{j-1}}] 
= \E [F(\x_j)] - \E [F(\x_j)] = 0.
\end{align*}
}
\item{
If $ I_{\bar{\mathcal E}_{j}} = 0$, then $ I_{\bar{\mathcal E}_{j-1}}$ could be either $1$ or $0$.
When $ I_{\bar{\mathcal E}_{j-1}}=0$, then
\begin{align*}
\E [F(\x_j) I_{\bar{\mathcal E}_{j}}] - \E [F(\x_j) I_{\bar{\mathcal E}_{j-1}}] = 0.
\end{align*}
When $ I_{\bar{\mathcal E}_{j-1}}=1$, then
\begin{align*}
&\E [F(\x_j) I_{\bar{\mathcal E}_{j}}] - \E [F(\x_j) I_{\bar{\mathcal E}_{j-1}}] 
= - \E [F(\x_j)] \text{Pr}({\bar{\mathcal E}_{j-1}}-{\bar{\mathcal E}_{j}}).
\end{align*}
Please note that here $\text{Pr}({\bar{\mathcal E}_{j-1}}-{\bar{\mathcal E}_{j}})$ means the probability of ${\bar{\mathcal E}_{j-1}}$ happens (i.e., $I_{\bar{\mathcal E}_{j-1}}=1$ ) and ${\bar{\mathcal E}_{j}}$ doesn't happen (i.e., $I_{\bar{\mathcal E}_{j}}=0$). 
}
\end{itemize}
(b) According to (\ref{inqe:case:1:2}) we can see that under the event $\bar{\mathcal E}_j$
\begin{align*}
\E[F(\x_{j})] \leq F(\x_0) - j \theta.
\end{align*}
As a result,  $F(\x_*)\leq \E[F(\x_{j})] \leq F(\x_0) $, and
\begin{align}\label{bound:exp:obj}
 |\E[F(\x_{j})]| \leq B := \max\{ |F(\x_*)|, |F(\x_0)|\}. 
\end{align}
Thus, 
\begin{align*}
&\E [F(\x_{j+1}) I_{\bar{\mathcal E}_{j}}] - \E [F(\x_j) I_{\bar{\mathcal E}_{j-1}}] \leq -\theta \text{Pr}(\bar{\mathcal E}_{j}) + B\text{Pr}({\bar{\mathcal E}_{j-1}}-{\bar{\mathcal E}_{j}}).
\end{align*}
By summing up over $k = 0,\dots, j$ we have
\begin{align*}
& \E [F(\x_{j+1}) I_{\bar{\mathcal E}_j}] - \E[F(\x_0)] \leq - \theta \sum_{k=0}^{j}\text{Pr}(\bar{\mathcal E}_{k}) + B \sum_{k=0}^{j}\text{Pr}({\bar{\mathcal E}_{k-1}}-{\bar{\mathcal E}_{k}})\\
\leq &- \theta \sum_{k=0}^{j}\text{Pr}(\bar{\mathcal E}_{k}) + B \text{Pr}({-\bar{\mathcal E}_{j}})
\leq - \theta (j+1)\text{Pr}(\bar{\mathcal E}_{j}) + B \text{Pr}(-{\bar{\mathcal E}_{j}}),
\end{align*}
which implies
\begin{align*}
\theta (j+1)\text{Pr}(\bar{\mathcal E}_{j}) \leq & B \text{Pr}(-{\bar{\mathcal E}_{j}}) -E [F(\x_{j+1}) I_{\bar{\mathcal E}_j}] + E[F(\x_0)]\\
\leq& B + [F(\x_0) - F(\x_*)] \leq B +\Delta.
\end{align*}
As $(j+1)$ grows to as large as $\frac{2(B+\Delta)}{\theta}$, we will have $Pr(\bar{\mathcal E}_{j}) \leq \frac{1}{2}$.
Therefore, after $\widetilde{O}(\frac{2(B+\Delta)}{\theta})$ steps, $\y_j\in\mathcal C_3$ must occur at least once with probability at least $\frac{1}{2}$. If we repeat this $\log(1/\zeta)$ times, then after $\widetilde{O}(\log(1/\zeta)\frac{1}{\theta})$ steps, with probability at least $1-\zeta/2$, $\y_j\in\mathcal C_3$ must occur at least once.
Therefore, with high probability, NEON-$\mathcal A$ terminates with a total IFO complexity of $\widetilde O(\max\left(\frac{1}{\varepsilon(\epsilon, \alpha)}, \frac{1}{\gamma^3}\right)(T_n + T_a + T_c))$. 
\end{proof}

\section{Proof of Corollary~\ref{thm:SM}}
\begin{proof}
Let us first consider SM($\x_0, \eta, \beta, s, t)$. According to the analysis of Theorem 3 in~\cite{yangnonconvexmo}, when $\eta\leq (1-\beta)/(2L_1)$, we have
\begin{align*}
&\frac{1}{t+1}\sum_{\tau=0}^t\E[\|\nabla F(\x_{\tau})\|^2]\leq \frac{2\E[F(\x_0) - F(\x^+_{t+1})]}{\eta (t+1)/(1-\beta)} + D \eta,
\end{align*}
where $D = \left[\frac{L_1\beta^2((1-\beta)s-1)^2\sigma^2}{(1-\beta)^3} + \frac{L_1 \sigma^2}{1-\beta}\right]$, and $\sigma^2$ is the upper bound of $\E[\|\nabla f(\x; \xi)\|^2]$. 
On the other hand, by Lemma 3 of~\cite{yangnonconvexmo}, we know for any $\tau \geq 0$,
\begin{align*}
 \E[\|\nabla F(\x^+_{\tau}) - \nabla F(\x_{\tau})\|^2] \leq \frac{L_1\eta^2}{1-\beta} \left[D -  \frac{L_1 \sigma^2}{1-\beta}\right] 
\leq  \frac{L_1\eta^2}{1-\beta} D \leq \frac{D\eta}{2},
\end{align*}
where the last inequality is due to $\eta\leq (1-\beta)/(2L_1)$. Since $ \E[\|\nabla F(\x^+_{\tau})\|^2] \leq  2 \E[\|\nabla F(\x^+_{\tau}) - \nabla F(\x_{\tau})\|^2  + \|\nabla F(\x_{\tau})\|^2]$, then
\begin{align}\label{eqn:sm}
\frac{1}{t+1}\sum_{\tau=0}^t\E[\|\nabla F(\x^+_{\tau})\|^2]
\leq \frac{4\E[F(\x_0) - F(\x^+_{t+1})]}{\eta (t+1)/(1-\beta)} + 3D \eta,
\end{align}
Next, consider $(\y_j, \z_j) = \text{SM}(\x_j, \eta, \beta, s, t)$, we have $\y_j = \x^+_{\tau'}$, $\z_j = \x^+_{t+1}$, and 
\begin{align*}
&\E[\|\nabla F(\y_j)\|^2]\leq \frac{4\E[F(\x_j) - F(\z_j)]}{\eta (t+1)/(1-\beta)} + 3D \eta.
\end{align*}
When $\|\nabla F(\y_j)\|\geq \epsilon$ we have 
\begin{align*}
&\E[F(\z_j) - F(\x_j)]  \leq  - \frac{\eta (t+1)}{4(1-\beta)} (\epsilon^2 - 3 D \eta) \leq - \frac{C'\epsilon^2}{48D(1-\beta)},
\end{align*}
where the last inequality holds by choosing $\eta = \frac{\epsilon^2}{6D}$ and $t+1 \geq \frac{C'}{\epsilon^2}$, where $C'$ is a constant. Therefore, we have $\varepsilon(\epsilon, \alpha) = \frac{C'\epsilon^2}{48D(1-\beta)}$.
Similarly, from~(\ref{eqn:sm}) we have
\begin{align*}
&\E[F(\y_j) - F(\x_j)]  \leq   \frac{3\eta^2 (t+1) D}{4(1-\beta)} \leq  \frac{C\epsilon^2}{2},
\end{align*}
where the last inequality holds with $t+1\leq \frac{24DC(1-\beta)}{\epsilon^2}$.  Therefore, when  $\|\nabla F(\y_j)\|\leq \epsilon$, we have $\E[F(\y_j) -F(\x_j)]\leq \epsilon^2$. By assuming that $\gamma\geq \epsilon^{2/3}$,  the two inequalities in~(\ref{eqn:A}) hold. 
\end{proof}

\section{Proof of Corollary~\ref{thm:SGM}}
\begin{proof}
Based on Theorem~\ref{thm:gen}, we only need to show (\ref{eqn:A}) holds for NEON-MSGD. By the updates in (\ref{eqn:SGD}), we know $\y_j = \x_j$. It implies that the second inequality of (\ref{eqn:A}) holds, i.e. $\E[F(\y_j) - F(\x_j)]\leq \frac{C\gamma^3}{2}$.
Next, we will show the first inequalty holds when $\|\nabla F(\y_j)\|\geq \epsilon$, i.e. $\|\nabla F(\x_j)\|\geq \epsilon$.
Recall that the update of MSGD is $\z_j = \x_j - \frac{1}{L_1}\nabla F_{\mathbf S_1}(\x_j)$, then by the smoothness of $F(\cdot)$ we have
\begin{align*}
 \E[ F(\z_{j}) - F(\x_j)] 
\leq &\E\left[ (\z_{j} -\x_j)^\top\nabla F(\x_j) + \frac{L_1}{2}\|\z_{j} - \x_j\|^2\right] \\
= &  -\frac{1}{L_1} \|\nabla F(\x_j) \|^2 + \E\left[\frac{1}{2L_1}\|\nabla F_{\mathbf S_1}(\x_j)\|^2\right] \\
= &  -\frac{1}{2L_1} \|\nabla F(\x_j) \|^2 + \E\left[\frac{1}{2L_1}\|\nabla F(\x_j) - \nabla F_{\mathbf S_1}(\x_j)\|^2\right]\\
\leq & -\frac{1}{2L_1} \|\nabla F(\x_j) \|^2 + \frac{1}{2L_1} \frac{V}{|\S_1|},
\end{align*}
where the last inequality uses Assumption~\ref{ass:1} (iv). 
By choosing $|\S_1| \ge \frac{2V}{\epsilon^2}$ and using the condition of $\|\nabla F(\x_j)\|\geq \epsilon$, we have
\begin{align*}
 \E[ F(\z_{j}) - F(\x_j)]\leq  -\frac{\epsilon^2}{4L_1} = - \varepsilon(\epsilon, \alpha).
\end{align*}
Additionally, we have $\y_j=\x_j$ indicating that the second inequality in (\ref{eqn:A}) holds. 
\end{proof}

\section{Proof of Corollary~\ref{thm:SCSG}}
\begin{proof}
Based on Theorem~\ref{thm:gen}, we only need to show (\ref{eqn:A}) holds for NEON-SCSG. By the updates in (\ref{eqn:SCSG}), we know $\z_j = \y_j$. 
Each epoch of SCGS guarantees~\citep{DBLP:journals/corr/LeiJCJ17} that 
\begin{align*}
\E[\|\nabla F(\y_j)\|^2]\leq \frac{5L_1 b^{1/3}}{c' |\S_1|^{1/3}} \E[F(\x_{j}) - F(\y_j)] + \frac{6 V}{|\S_1|}.
\end{align*}
As a result,  when $\|\nabla F(\y_j)\|\geq \epsilon$, we have
\begin{align*}
\E[F(\y_j) - F(\x_j)]\leq  -\frac{c' |\S_1|^{1/3}}{5L_1b^{1/3}}(\epsilon^2-\frac{6V}{|\S_1|})
\end{align*}

\begin{algorithm}[t]
\caption{SCSG-epoch: $(\x, \mathbf S_1, b)$}\label{alg:SCSG-epoch}
\begin{algorithmic}[1]
\STATE \textbf{Input}:  $\x$, an independent set of samples $\mathbf S_1$ and $b\leq |\mathbf S_1|$
\STATE Set $m_1 = |\S_1|$, $\eta = c'(m_1/b)^{-2/3}$ with $c'\leq 1/6$
\STATE Compute $\nabla F_{\S}(\x_{j-1})$
\STATE Let $\x_0 = \x$ and generate $N\sim \text{Geom}(m_1/(m_1+b))$
\FOR{$k=1,2,\ldots, N$}
\STATE Sample samples $\S_k$ of size $b$
\STATE Compute $\v_{k} = \nabla F_{\S_k}(\x_{k-1}) - \nabla F_{\S_k}(\x_0) + \nabla F_{\S}(\x_0)$
\STATE $\x_k = \x_{k-1} - \eta \v_{k}$ 
\ENDFOR
\STATE RETURN $\x_{N}$
\end{algorithmic}
\end{algorithm}

By setting $|\S_1| \geq  12V/\epsilon^2$, we have
\begin{align*}
\E[F(\y_j) - F(\x_j)]\leq -\frac{c' |\S_1|^{1/3}}{10L_1b^{1/3}} \epsilon^2 \leq - \frac{c' (12V)^{1/3}}{10L_1}\frac{\epsilon^{4/3}}{b^{1/3}}
\end{align*}
On the other hand, when  $\|\nabla F(\y_j)\|\leq \epsilon$, we have
\begin{align*}
0\leq \E[\|\nabla F(\y_j)\|^2]\leq \frac{5L_1 b^{1/3}}{c' |\S_1|^{1/3}} \E[F(\x_{j}) - F(\y_j)] + \frac{6 V}{|\S_1|},
\end{align*}
i.e., 
\begin{align*}
 \E[F(\y_{j}) - F(\x_j)]\leq \frac{6c' V}{5L_1|\S_1|^{2/3}b^{1/3}}.
\end{align*}
By choosing $|\S_1| \geq  \left(\frac{12Vc'}{5CL_1}\right)^{3/2} \frac{1}{b^{1/2}\gamma^{9/2}}$,
\begin{align*}
 \E[F(\y_{j}) - F(\x_j)]\leq \frac{C\gamma}{2}.
\end{align*}

In summary, the sample sizes must satisfy $|\S_1|\geq \widetilde O(\max(1/\epsilon^{2}, 1/(\gamma^{9/2}b^{1/2}))$. The total iterations (i.e., the number of calls of SCSG-Epoch and NEON) is $\widetilde O\left(\max(\frac{b^{1/3}}{\epsilon^{4/3}}, \frac{1}{\gamma^3})\right)$. The expected IFO complexity of each SCSG-Epoch is $2|\S_1|\geq \widetilde O(\max(1/\epsilon^{2}, 1/(\gamma^{9/2}b^{1/2}))$ regardless the value of $b$ due to the geometric distribution of $N$ in SCSG-Epoch. 
\end{proof}

\section{Proof of Theorem~\ref{thm:main:GD}}
We first prove the following lemma. 
\begin{lemma}\label{lemma:key:noisypower2n}
Under the same conditions and settings as in Theorem~\ref{thm:main:GD},   If NEON is called  at  a point $\x$ such that $\lambda_{\min}(\nabla^2 f(\x))\leq -\gamma$, then  with high probability $1-\delta$ there exists $\tau\leq t$ such that
\begin{align*}
&f(\x  + \u_\tau) - f(\x) - \nabla f(\x)^{\top}\u_\tau \leq - 2\mathcal F, \\
 &\|\u_\tau\|\leq U = 4\hat c(\sqrt{\eta L_1}\mathcal F/L_2)^{1/3}
\end{align*}
\end{lemma}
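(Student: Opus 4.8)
Write $H:=\nabla^2 f(\x)$ and recall $\hat f(\u)=\hat f_\x(\u)=f(\x+\u)-f(\x)-\nabla f(\x)^{\top}\u$ has $\nabla\hat f(\u)=\nabla f(\x+\u)-\nabla f(\x)$ and $\nabla^2\hat f(\u)=\nabla^2 f(\x+\u)$, so $\hat f$ is $L_1$-smooth and the NEON loop is exactly gradient descent on $\hat f$ with $\eta\le c_{\max}/L_1\le 1/L_1$. Two standard facts follow: (a) (descent) $\hat f(\u_{\tau+1})\le\hat f(\u_\tau)-\tfrac{\eta}{2}\|\nabla\hat f(\u_\tau)\|^2$, so $\hat f(\u_\tau)$ is non-increasing and $\sum_{j<\tau}\|\nabla\hat f(\u_j)\|^2\le\tfrac{2}{\eta}(\hat f(\u_0)-\hat f(\u_\tau))$; (b) (improve-or-localize) by Cauchy--Schwarz $\|\u_\tau-\u_0\|^2\le\tau\eta^2\sum_{j<\tau}\|\nabla\hat f(\u_j)\|^2\le 2\tau\eta(\hat f(\u_0)-\hat f(\u_\tau))$. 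Since $\|\u_0\|=r$ we also have $\hat f(\u_0)\le\tfrac{L_1 r^2}{2}$, which the given parameter choice makes $\le\mathcal F/2$. I prove the lemma by contradiction: let $\mathcal E_0=\{\hat f(\u_\tau)>-2\mathcal F\ \text{for all }0\le\tau\le t\}$ be the ``stuck'' event, and show $\Pr(\mathcal E_0)\le\delta$ over the random $\u_0$. On $\mathcal E_0^c$, let $\tau^\star$ be the first index with $\hat f(\u_{\tau^\star})\le-2\mathcal F$; fact (b) applied up to $\tau^\star-1$ gives $\|\u_j\|\le r+\sqrt{6t\eta\mathcal F}=:r+\mathcal S$ for $j<\tau^\star$, and one more gradient step, of length $\le\eta L_1\|\u_{\tau^\star-1}\|\le\eta L_1(r+\mathcal S)$, gives $\|\u_{\tau^\star}\|\le 2(r+\mathcal S)$; plugging in the parameters, $2(r+\mathcal S)<U$ (here $\hat c\ge 18$ already gives room, since $\mathcal S/U=\sqrt6/(4\sqrt{\hat c})$), so $\tau=\tau^\star$ witnesses both conclusions.

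It remains to bound $\Pr(\mathcal E_0)$, which I do by the two-point coupling of \citep{jin2017escape}. Let $\e_1$ be a unit eigenvector of $H$ with eigenvalue $\lambda_{\min}(H)=-\gamma_0$, $\gamma_0\ge\gamma$, and run NEON from $\u_0$ and from $\u_0'=\u_0+\mu\e_1$ (both inside the radius-$r$ ball) with $|\mu|\ge\nu_0$ for a threshold $\nu_0=\Theta(\delta r/\sqrt d)$ to be fixed. If both runs stay in $\mathcal E_0$, fact (b) localizes both: $\|\u_\tau\|,\|\u_\tau'\|\le\mathcal S$ for all $\tau\le t$ (using $\hat f(\u_0)-\hat f(\u_\tau)\le\mathcal F/2+2\mathcal F\le 3\mathcal F$, and likewise for the primed run). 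Subtracting the two iterations, $\w_\tau:=\u_\tau-\u_\tau'$ obeys
\[
\w_{\tau+1}=(I-\eta H)\w_\tau-\eta\Delta_\tau,\qquad \Delta_\tau:=\int_0^1\big(\nabla^2 f(\x+\u_\tau'+s\w_\tau)-H\big)\w_\tau\,ds,
\]
and since $\u_\tau'+s\w_\tau$ is a convex combination of $\u_\tau,\u_\tau'$ it has norm $\le\mathcal S$, so Hessian-Lipschitzness gives $\|\Delta_\tau\|\le L_2\mathcal S\|\w_\tau\|$. Unrolling, $\w_\tau=(I-\eta H)^\tau\w_0-\eta\sum_{j<\tau}(I-\eta H)^{\tau-1-j}\Delta_j$. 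Because $\w_0=\mu\e_1$ lies \emph{exactly} in the top eigenspace of $I-\eta H$, the lead term has norm precisely $(1+\eta\gamma_0)^\tau|\mu|$, while $\|I-\eta H\|_2=1+\eta\gamma_0$ (as $\eta L_1\le1$) bounds the rest; a short induction then yields $\tfrac12(1+\eta\gamma_0)^\tau|\mu|\le\|\w_\tau\|\le 2(1+\eta\gamma_0)^\tau|\mu|$ for all $\tau\le t$, provided the accumulated error $2\eta L_2\mathcal S\cdot\tau\cdot(1+\eta\gamma_0)^{\tau-1}\le\tfrac12(1+\eta\gamma_0)^\tau$, i.e. $\mathcal S\le\Theta\!\big(\gamma/(L_2\hat c\log(dL_1/(\gamma\delta)))\big)$. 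This is exactly where $c_{\max}$ must be chosen small as a function of $\hat c$: substituting $\mathcal S=\sqrt{6t\eta\mathcal F}=\Theta(\sqrt{\eta L_1}\,\gamma L_2^{-1}\log^{-1}(dL_1/(\gamma\delta)))$ reduces the requirement to $\sqrt{\eta L_1}\le\Theta(\hat c^{-3/2})$, which holds for $\eta\le c_{\max}/L_1$ with $c_{\max}\lesssim\hat c^{-3}$.

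Taking $\tau=t$ in the lower bound and using $\log(1+\eta\gamma)\ge\eta\gamma/2$ together with $\eta\gamma t=\hat c\log(dL_1/(\gamma\delta))$ gives $\|\w_t\|\ge\tfrac{|\mu|}{2}(1+\eta\gamma)^t\ge\tfrac{|\mu|}{2}(dL_1/(\gamma\delta))^{\hat c/2}$, whereas localization forces $\|\w_t\|\le\|\u_t\|+\|\u_t'\|\le 2\mathcal S$. Since $\mathcal S/\nu_0$ is only polynomially bounded in $dL_1/(\gamma\delta)$ (one checks $\mathcal S/r=\Theta(\sqrt{\hat c}\,L_1\gamma^{-1}\log(dL_1/(\gamma\delta)))$, so $\mathcal S/\nu_0=\mathrm{poly}$), the bound $\hat c\ge 18$ makes the exponential factor dominate and $\tfrac{\nu_0}{2}(dL_1/(\gamma\delta))^{\hat c/2}>2\mathcal S$ for $\delta$ small enough, a contradiction. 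Hence at most one of $\u_0,\u_0+\mu\e_1$ lies in $\mathcal E_0$ whenever $|\mu|\ge\nu_0$; equivalently, along every line parallel to $\e_1$ the stuck set has length $\le 2\nu_0$. Integrating over the orthogonal directions, and using that for a uniform point in the radius-$r$ ball the density of $\langle\u_0,\e_1\rangle$ near $0$ is $O(\sqrt d/r)$ (anti-concentration), $\Pr(\mathcal E_0)\le\Theta(\nu_0\sqrt d/r)\le\delta$ on choosing $\nu_0=\Theta(\delta r/\sqrt d)$. Combined with the first paragraph (all remaining estimates are deterministic given $\u_0\notin\mathcal E_0$, over the $\le t$ iterates) this proves the lemma.

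The main obstacle is the error-control step inside the coupling recursion: the second-order tail $\eta\sum_{j<\tau}(I-\eta H)^{\tau-1-j}\Delta_j$ must stay below the signal $(1+\eta\gamma_0)^\tau\mu$ \emph{uniformly for all $\tau\le t$}, even though $t$ is as large as $\widetilde O(1/(\eta\gamma))$ and $\|I-\eta H\|_2>1$ lets errors compound. What makes it work is that $\|\Delta_j\|$ is \emph{linear} in $\|\w_j\|$ (it is a difference of two gradient differences over a segment of length $O(\mathcal S)$), so error/signal is independent of $\mu$ and of the iterate magnitudes and is controlled purely by $L_2\mathcal S/\gamma\lesssim\sqrt{\eta L_1}$; a direct single-sequence analysis of $\langle\u_\tau,\e_1\rangle$ would instead face an error $\propto\|\u_j\|^2$, whose ratio to the signal blows up with $t$, which is why the coupling is essential.
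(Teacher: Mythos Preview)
Your proof is correct and follows essentially the same route as the paper: both combine the gradient-descent ``improve-or-localize'' bound (your fact (b), the paper's Lemma~\ref{lemma:key1n}) with the two-point coupling along the bottom eigenvector $\e_1$ (the paper's Lemma~\ref{lemma:key2n}) and then the thin-slab volume argument over the random $\u_0$, all inherited from \citep{jin2017escape}. The only cosmetic differences are that the paper packages the localization and coupling steps as separate sub-lemmas and tracks the $\e_1$-component and its orthogonal complement $(\psi_\tau,\varphi_\tau)$ separately in the induction, whereas you unroll the recursion for $\w_\tau$ and bound $\|\w_\tau\|$ directly; the parameter conditions you derive ($c_{\max}\lesssim\hat c^{-3}$, $\hat c\ge 18$) match.
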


For simplicity, we recall and define some notations. Let $H = \nabla^2 f(\x)$ be the Hessian matrix and $\e_1$ be the unit minimum eigenvector of $H$. We also use the following notations in the proofs: 
\begin{align*}
&\mathcal F := \eta L_1 \frac{\gamma^3}{L_2^2}\cdot \log^{-3}(d\kappa /\delta), \\
&\mathcal  P  :=\sqrt{\eta L_1}\frac{\gamma}{L_2} \cdot \log^{-1}(d\kappa /\delta), \\
&\mathcal  J  := \frac{\log (d\kappa /\delta)}{\eta \gamma},
\end{align*}
where $\kappa = L_1/ \gamma \ge 1$ is the condition number. 
From above notations, we know $r=\frac{\mathcal P}{\kappa}\log^{-1}(d\kappa /\delta) = \mathcal P\gamma L_1^{-1}\log^{-1}(d\kappa /\delta)\leq\mathcal P$. 
Let us define 
\begin{align}\label{approx:qua:n}
\hat f_{\x}(\u) = f(\x + \u ) - f(\x) -\u^\top\nabla f(\x)
\end{align}
Also, we need the following two lemmas for our proof. It is notable that the following two lemmas are similar to Lemma 16 and Lemma 17 in\cite{jin2017escape}. The analysis of Lemma~\ref{lemma:key2n} is similar to that of Lemma 17 in~\cite{jin2017escape}. However, we provide a much simpler analysis for Lemma~\ref{lemma:key1n} than that of Lemma 16 in~\cite{jin2017escape}.

\begin{lemma}\label{lemma:key1n}
For any constant $\hat c\geq 8$, there exists $c_{\max}$: for $\x$ satisfies the condition that  $\lambda_{\min}(\nabla^2 f(\x))\leq - \gamma$
and any initial point $\u_0$ with $\|\u_0\|\leq 2r$, 
define
\begin{align*}
T = \min\{\inf_\tau \{\tau| \hat f_\x(\u_\tau) - \hat f_\x(\u_0)  \leq -3\mathcal F\}, \hat c\mathcal J\}.
\end{align*}
Then for any $\eta\leq c_{\max}/L_1$, we have for all $\tau<T$ that $\|\u_\tau\|\leq 2\hat c\mathcal P$. 
\end{lemma}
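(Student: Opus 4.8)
The plan is to exploit that the NEON recurrence~(\ref{eqn:noisypower}) is precisely gradient descent applied to the shifted function $\hat f_\x$ of~(\ref{eqn:objn}): since $\nabla\hat f_\x(\u)=\nabla f(\x+\u)-\nabla f(\x)$, the update reads $\u_{\tau+1}=\u_\tau-\eta\nabla\hat f_\x(\u_\tau)$, and $\hat f_\x$ inherits an $L_1$-Lipschitz gradient from $f$. Choosing $c_{\max}\le 1$ so that $\eta\le 1/L_1$, the standard descent lemma gives $\hat f_\x(\u_{\tau+1})\le \hat f_\x(\u_\tau)-\tfrac{\eta}{2}\|\nabla\hat f_\x(\u_\tau)\|^2$ for \emph{every} $\tau$, independent of where $\u_\tau$ sits; this global validity is exactly why the ``localize'' half of the improve-or-localize argument is easy here. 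Telescoping from $0$ to $\tau-1$ and using that for $\tau<T$ the definition of $T$ forces $\hat f_\x(\u_\tau)-\hat f_\x(\u_0)>-3\mathcal F$, I obtain $\tfrac{\eta}{2}\sum_{s=0}^{\tau-1}\|\nabla\hat f_\x(\u_s)\|^2\le \hat f_\x(\u_0)-\hat f_\x(\u_\tau)<3\mathcal F$.

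The second step converts this into a norm bound on $\u_\tau$. Unrolling the recurrence, $\u_\tau=\u_0-\eta\sum_{s=0}^{\tau-1}\nabla\hat f_\x(\u_s)$, so by the triangle inequality and Cauchy--Schwarz,
\[
\|\u_\tau\|\le \|\u_0\|+\eta\sqrt{\tau}\,\Big(\sum_{s=0}^{\tau-1}\|\nabla\hat f_\x(\u_s)\|^2\Big)^{1/2}< \|\u_0\|+\sqrt{6\,\eta\,\tau\,\mathcal F}.
\]
Since $\tau<T\le\hat c\mathcal J$, the last term is at most $\sqrt{6\hat c\,\eta\mathcal J\mathcal F}$, and plugging in the definitions of $\mathcal F,\mathcal J,\mathcal P$ yields the identity $\eta\mathcal J\mathcal F=\eta L_1\gamma^2L_2^{-2}\log^{-2}(d\kappa/\delta)=\mathcal P^2$, so this term equals $\sqrt{6\hat c}\,\mathcal P$. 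Combining with $\|\u_0\|\le 2r\le 2\mathcal P$ (as already recorded in the list of notations), I conclude $\|\u_\tau\|\le(2+\sqrt{6\hat c})\,\mathcal P\le 2\hat c\,\mathcal P$, where the last inequality holds for all $\hat c\ge 8$.

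The only items requiring care are the algebraic identity $\eta\mathcal J\mathcal F=\mathcal P^2$ and the elementary inequality $2+\sqrt{6\hat c}\le 2\hat c$ for $\hat c\ge 8$; there is no genuine analytic obstacle. I expect this argument to be markedly shorter than Lemma~16 of~\citep{jin2017escape} precisely because I track the decrease of the auxiliary function $\hat f_\x$ directly along the descent path, rather than relating it back to the decrease of the original objective $f$, so no second-order Taylor bookkeeping around $\x$ is needed here.
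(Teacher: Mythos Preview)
Your proposal is correct and follows essentially the same route as the paper's proof. Both arguments recognize that the NEON update is gradient descent on $\hat f_\x$, apply the descent lemma (you phrase it in terms of $\|\nabla\hat f_\x(\u_s)\|$, the paper in terms of the equivalent $\|\u_{s+1}-\u_s\|=\eta\|\nabla\hat f_\x(\u_s)\|$), telescope to bound the sum of squared steps by $6\eta\mathcal F$, use Cauchy--Schwarz with $\tau<\hat c\mathcal J$, and finish via the identity $\eta\mathcal J\mathcal F=\mathcal P^2$ plus the triangle inequality against $\|\u_0\|$.
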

\begin{lemma}\label{lemma:key2n}
There exist constant $c_{\max}, \hat c$ such that: for $\x$ satisfies the condition that $\lambda_{\min}(\nabla^2 f(\x))\leq - \gamma$
define a sequence $\w_t$ similar to $\u_t$ except $\w_0  =\u_0+ \mu r \e_1$, where $\mu\in[\delta/2\sqrt{d}, 1]$ and $\e_1$ is a unit eigen-vector corresponding to the minimum eigen-value of $\nabla^2 f(\x)$, and let  $\v_t = \w_t - \u_t$
\begin{align*}
T = \min\{\inf_\tau \{\tau| \hat f_\x(\w_\tau) - \hat f_\x(\w_0)\leq -3\mathcal F\}, \hat c\mathcal J\}.
\end{align*}
Then for any $\eta\leq c_{\max}/L_1$, if $\|\u_\tau\|\leq 2\hat c \mathcal P$ for all $\tau<T$, then $T<\hat c\mathcal J$.
\end{lemma}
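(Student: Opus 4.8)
The plan is a contradiction argument resting on two facts. First, the iterations defining $\u_\tau$ and $\w_\tau$ are literally gradient descent on $\hat f_\x$, since $\nabla \hat f_\x(\v) = \nabla f(\x+\v) - \nabla f(\x)$; this function has an $L_1$-Lipschitz gradient, so for $\eta \le 1/L_1$ the descent lemma together with Cauchy--Schwarz gives $\hat f_\x(\w_\tau) - \hat f_\x(\w_0) \le -\frac{1}{2\eta\tau}\|\w_\tau - \w_0\|^2$. Suppose, toward a contradiction, that $T = \hat c\mathcal J$; then $\hat f_\x(\w_\tau) - \hat f_\x(\w_0) > -3\mathcal F$ for all $\tau < \hat c\mathcal J$, which forces $\|\w_\tau - \w_0\| < \sqrt{6\eta\tau\mathcal F} \le \sqrt{6\hat c}\,\mathcal P$ (using $\eta\mathcal J\mathcal F = \mathcal P^2$). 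Combined with the standing hypothesis $\|\u_\tau\| \le 2\hat c\mathcal P$ and with $\|\u_0\| \le 2r$, $\|\w_0 - \u_0\| = \mu r \le r \le \mathcal P$, this yields a uniform localization bound $\|\v_\tau\| \le 6\hat c\mathcal P$, and likewise $\|\w_\tau\| \le 6\hat c\mathcal P$, for every $\tau < \hat c\mathcal J$.

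Second, subtracting the two recurrences gives $\v_\tau = (I - \eta H)\v_{\tau-1} - \eta\r_{\tau-1}$, where $H = \nabla^2 f(\x)$ and $\r_{\tau-1} = \nabla f(\x+\w_{\tau-1}) - \nabla f(\x+\u_{\tau-1}) - H\v_{\tau-1}$. Expressing the gradient difference in mean-value form and invoking the $L_2$-Lipschitz Hessian, $\|\r_{\tau-1}\| \le L_2(\|\u_{\tau-1}\| + \|\v_{\tau-1}\|)\|\v_{\tau-1}\| \le 8\hat c L_2\mathcal P\,\|\v_{\tau-1}\|$ on this window. Since $L_2\mathcal P = \sqrt{\eta L_1}\,\gamma\,\log^{-1}(d\kappa/\delta)$, this gives $\eta\|\r_{\tau-1}\| \le 8\hat c\sqrt{\eta L_1}\,\eta\gamma\,\|\v_{\tau-1}\| \le \frac{\eta\gamma}{8}\|\v_{\tau-1}\|$ once $c_{\max}$ is chosen small enough, as a function of $\hat c$, that $8\hat c\sqrt{c_{\max}} \le \frac{1}{8}$. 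Writing $\alpha_\tau = \e_1^\top\v_\tau$ (so $\alpha_0 = \mu r$, since $\v_0 = \mu r\,\e_1$) and using $1 - \eta\lambda_1 \ge 1 + \eta\gamma$, we obtain $\alpha_\tau \ge (1+\eta\gamma)\alpha_{\tau-1} - \frac{\eta\gamma}{8}\|\v_{\tau-1}\|$. The crux is an induction on $\tau$ showing $\|\v_\tau\| \le 2\alpha_\tau$ for all $\tau < \hat c\mathcal J$, i.e.\ that $\v_\tau$ stays concentrated along the leading eigendirection: one splits the eigenspaces of $H$ at a threshold (eigenvalue $\le -\gamma/2$ versus $> -\gamma/2$), notes that $I - \eta H$ expands the negative block with the $\e_1$-direction leading at rate $\ge 1 + \eta\gamma$ while relatively contracting its complement, and checks that the perturbation $\eta\r_{\tau-1}$ is, by the bound just derived, too small to break the concentration. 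Granting this, $\alpha_\tau \ge (1+\eta\gamma)\alpha_{\tau-1} - \frac{\eta\gamma}{4}\alpha_{\tau-1} = (1+\frac{3\eta\gamma}{4})\alpha_{\tau-1}$, hence $\|\v_\tau\| \ge \alpha_\tau \ge (1+\frac{3\eta\gamma}{4})^\tau\mu r$ as long as the localization bound still holds.

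To finish, since $\mu \ge \delta/(2\sqrt{d})$ and $r = \mathcal P\gamma L_1^{-1}\log^{-1}(d\kappa/\delta)$, we have $\mathcal P/(\mu r) \le 2\sqrt{d}\,\kappa\,\log(d\kappa/\delta)/\delta$, so $\log\big(6\hat c\,\mathcal P/(\mu r)\big) = O(\log(d\kappa/\delta))$. Therefore, for $\hat c$ a large enough absolute constant (the value $\hat c \ge 43$ in the statement more than suffices), there is a time $\tau^* < \hat c\mathcal J$ at which $(1+\frac{3\eta\gamma}{4})^{\tau^*}\mu r > 6\hat c\mathcal P$, i.e.\ $\|\v_{\tau^*}\| > 6\hat c\mathcal P$. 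This contradicts the uniform localization bound $\|\v_\tau\| \le 6\hat c\mathcal P$ established in the first step. Hence $T = \hat c\mathcal J$ is impossible, so $T < \hat c\mathcal J$, which is the claim.

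The step I expect to be the main obstacle is the concentration induction $\|\v_\tau\| \le 2\alpha_\tau$: because $H$ need not have a spectral gap near $-\gamma$, one cannot simply quote a gap-dependent noisy-power-method estimate, and the leakage of $\v_\tau$ out of the most-negative eigendirection must be tracked iteration by iteration against the accumulated Hessian-Lipschitz error $\r_\tau$. What makes it go through is exactly that the iterates stay localized within radius $O(\hat c\mathcal P)$, with $\mathcal P$ carrying the $\log^{-1}(d\kappa/\delta)$ factor that renders the per-step error negligible compared with $\eta\gamma$; this parallels Lemma~17 of \citep{jin2017escape}.
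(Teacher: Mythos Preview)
Your overall architecture is exactly the paper's: localize $\w_\tau$ (hence $\v_\tau$) using the descent inequality for $\hat f_\x$, and then argue that the $\e_1$-component of $\v_\tau$ grows geometrically until it violates the localization bound. The paper actually obtains the localization of $\w_\tau$ by invoking Lemma~\ref{lemma:key1n} (your derivation via descent plus Cauchy--Schwarz is the same computation), and then uses $\|\v_\tau\|\le\|\u_\tau\|+\|\w_\tau\|\le 4\hat c\mathcal P$.

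The gap is in your concentration step. The induction hypothesis $\|\v_\tau\|\le 2\alpha_\tau$ does \emph{not} close under one iteration when the bottom eigenvalues of $H$ are degenerate (or nearly so). Concretely, writing $\varphi_\tau$ for the norm of $\v_\tau$ orthogonal to $\e_1$ and $\psi_\tau=\alpha_\tau$, one has
\[
\psi_{\tau+1}\ge(1-\eta\lambda_1)\psi_\tau-\zeta\|\v_\tau\|,\qquad \varphi_{\tau+1}\le(1-\eta\lambda_2)\varphi_\tau+\zeta\|\v_\tau\|,
\]
with $\zeta=O(\hat c\,\eta L_2\mathcal P)$. If $\lambda_2=\lambda_1$ the two blocks expand at the \emph{same} rate, so a time-uniform ratio bound $\varphi_\tau\le C\psi_\tau$ cannot absorb the additive $\zeta\|\v_\tau\|$ leakage; the inequality you would need reduces to $(1+C)\zeta\sqrt{1+C^2}\le 0$. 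Your proposed threshold split at $-\gamma/2$ does not help here either: directions with eigenvalue in $(\lambda_1,-\gamma/2]$ are in your ``negative block'' yet can expand as fast as $\e_1$, so the negative block does not stay aligned with $\e_1$.

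What the paper (following Jin et al., Lemma~17) actually proves is the time-dependent bound $\varphi_\tau\le 4\zeta\tau\,\psi_\tau$, established by induction using only that $\varphi_0=0$ and $4\zeta T\le 4\zeta\hat c\mathcal J=O(\hat c^2\sqrt{\eta L_1})\le 1$ for small $c_{\max}$. This linear-in-$\tau$ hypothesis is exactly what survives the degenerate case: the orthogonal component starts at zero and accrues at most $\zeta\|\v_\tau\|\le\sqrt{2}\,\zeta\psi_\tau$ per step relative to $\psi_\tau$, so after $\tau$ steps it is $O(\zeta\tau)\psi_\tau$. Once $\varphi_\tau\le\psi_\tau$ is in hand, one gets $\psi_{\tau+1}\ge(1+\eta\gamma/2)\psi_\tau$ and the contradiction with $\|\v_\tau\|\le 4\hat c\mathcal P$ follows as you outlined. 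So your plan is right, but the induction you sketched has to be replaced by this $\varphi_\tau\le 4\zeta\tau\,\psi_\tau$ statement with the $\e_1$/complement split, not a threshold split.
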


\begin{proof}{\bf (of Lemma~\ref{lemma:key:noisypower2n})}
We define $T_*=\hat c \mathcal J$ and $T' = \inf_{\tau}\{\tau| \hat f_{\x}(\u_\tau) - \hat f_{\x}(\u_0) \leq -3\mathcal F\}$. Let's consider following two scenarios:\\ \\
{\bf (1) $T'\leq T_*$:} Since $\u_{T'} = \u_{T'-1} - \eta(\nabla f(\x+\u_{T'-1}) - \nabla f(\x))$, we can see that $\|\u_{T'}\|\leq \|\u_{T'-1}\| + \eta L_1\|\u_{T'-1}\|\leq 4 \hat c \mathcal P\triangleq U $ by employing Lemma~\ref{lemma:key1n}. Then we have
\begin{align*}
f(\x + \u_{T'}) - f(\x)- \u_{T'}^{\top} \nabla f(\x)
\leq & f(\x + \u_{0}) - f(\x)- \u_{0}^{\top} \nabla f(\x)  -3\mathcal F\\
\leq & \frac{L_1}{2}\|\u_0\|^2 -3\mathcal F \leq \mathcal F - 3\mathcal F = -2 \mathcal F.
\end{align*}
Therefore we have
\begin{align*}
& \min_{1\leq \tau'\leq T_*, \|\u_{\tau'}\|\leq U}f(\x + \u_{\tau'}) - f(\x) -\u_{\tau'}^{\top} \nabla f(\x)
 \leq  f(\x + \u_{T'}) - f(\x)-\u_{T'}^{\top} \nabla f(\x) \leq  -2\mathcal F.
\end{align*}
{\bf (2) $T' > T_*$:} By Lemma~\ref{lemma:key1n}, we have $\|\u_{\tau}\| \leq 2\hat c\mathcal P$ for all $\tau \leq T_*$. Let define $T'' = \inf_\tau \{\tau| \hat f_\x(\w_\tau) - \hat f_\x(\w_0) \leq -3\mathcal F\}$. By Lemma~\ref{lemma:key2n}, we konw $T''\leq T_*$.  From the proof of Lemma~\ref{lemma:key2n}, we also know that $\|\w_{T''-1}\|\leq 2\hat c \mathcal P$. Hence $\|\w_{T''}\|\leq \|\w_{T'-1}\| + \eta L_1\|\w_{T''-1}\|\leq 4 \hat c \mathcal P$.  Similar to case (1), we have 
\begin{align*}
&\min_{1\leq \tau'\leq T_*, \|\w_{\tau'}\|\leq U} f(\x + \w_{\tau'}) - f(\x) -\w_{\tau'}^{\top} \nabla f(\x)\leq -2\mathcal F.
\end{align*}
Therefore,
\begin{align*}
\min&\left\{ \min_{1\leq\tau'\leq T_*, \|\u_{\tau'}\|\leq U}f(\x + \u_{\tau'}) - f(\x) -\u_{\tau'}^{\top} \nabla f(\x),\right.\\
&\left.\min_{1\leq\tau'\leq T_*, \|\w_{\tau'}\|\leq U}f(\x + \w_{\tau'}) - f(\x) -\w_{\tau'}^{\top} \nabla f(\x) \right\} \leq -2\mathcal F.
\end{align*}
We know $\u_0$ follows an uniform distribution over $\mathbb B_0(r)$ with radius $r = \mathcal P/(\kappa \cdot \log(d\kappa/\delta))$. Let denote by $\mathcal X_s \subset \mathbb B_0(r)$ the set of bad initial points such that $\min_{1\leq \tau'\leq T_*, \|\u_{\tau}'\|\leq U}f(\x + \u_{\tau'}) - f(\x) -\u_{\tau'}^{\top} \nabla f(\x) > -2\mathcal F$ when $\u_0 \in \mathcal X_s$; otherwise $\min_{1\leq \tau'\leq T_*, \|\u_{\tau}'\|\leq U}f(\x + \u_{\tau'}) - f(\x) -\u_{\tau'}^{\top} \nabla f(\x) \le -2\mathcal F$ when $\u_0 \in  \mathbb B_0(r) - \mathcal X_s$.

By above analysis, for any $\u_0\in \mathcal X_s$, we have $(\u_0 \pm \mu r \e_1) \not \in \mathcal X_s $ where $\mu \in [\frac{\delta}{2\sqrt{d}}, 1]$. Let denote by $I_{\mathcal X_s}(\cdot)$ the indicator function of being inside set $\mathcal X_s$. We set $u^{(1)}$ as the component along $\e_1$ direction and $\u^{(-1)}$ as the remaining $d-1$ dimensional vector, then the vector $\u = (u^{(1)}, \u^{(-1)})$. It is easy to have an upper bound of $\mathcal X_s$'s volumn:
\begin{align*}
& \text{Vol}(\mathcal X_s) =  \int_{\mathbb B_0^{(d)}(r)}  \mathrm{d}\u \cdot I_{\mathcal X_s}(\u)
= \int_{\mathbb B_0^{(d-1)}(r)}  \mathrm{d}\u^{(-1)} \int_{-\sqrt{r^2 - \|\u^{(-1)}\|^2}}^{\sqrt{r^2 - \|\u^{(-1)}\|^2}} \mathrm{d} u^{(1)} \cdot I_{\mathcal X_s}(\u)\\
\leq & \int_{\mathbb B_0^{(d-1)}(r)}  \mathrm{d}\u^{(-1)} \cdot 2\mu r 
\leq \int_{\mathbb B_0^{(d-1)}(r)}  \mathrm{d}\u^{(-1)}  \cdot 2 \frac{\delta}{2\sqrt{d}}r
 =   \text{Vol}(\mathbb B_0^{(d-1)}(r)) \frac{\delta r}{\sqrt{d}} 
\end{align*}
Then, 
\begin{align*}
&\frac{\text{Vol}(\mathcal X_s)}{\text{Vol}(\mathbb B_0^{(d)}(r)) }
\le \frac{ \text{Vol}(\mathbb B_0^{(d-1)}(r)) \frac{\delta r}{\sqrt{d}}}{\text{Vol}(\mathbb B_0^{(d)}(r)) }
= \frac{\delta}{\sqrt{\pi d}}\frac{\Gamma(\frac{d}{2}+1)}{\Gamma(\frac{d}{2}+\frac{1}{2})}
\le \frac{\delta}{\sqrt{\pi d}} \cdot \sqrt{\frac{d}{2}+\frac{1}{2}} \le \delta
\end{align*}
where the second inequality is due to $\frac{\Gamma(x+1)}{\Gamma(x+1/2)}<\sqrt{x+\frac{1}{2}}$ for all $x\ge 0$.
Thus, we have $\u_0 \not \in \mathcal X_s$ with at least probability $1-\delta$. Therefore,
\begin{align}\label{ineq:keylemma:1:n}
\min_{1\leq \tau'\leq T_*, \|\u_{\tau'}\|\leq U}f(\x + \u_{\tau'}) - f(\x) -\u_{\tau'}^{\top} \nabla f(\x) \leq -2\mathcal F.
\end{align}

To finish the proof of Theorem~\ref{thm:main:GD}, by the Lipschitz continuity of Hessian, we have 
\begin{align*}
&\left| f(\x+\u_{\tau}) - f(\x) - \nabla f(\x)^\top\u_\tau - \frac{1}{2}\u_{\tau}^\top\nabla^2 f(\x)\u_{\tau}\right| 
\leq \frac{L_2}{6}\|\u_{\tau}\|^3.
\end{align*}
Then by inequality (\ref{ineq:keylemma:1:n}), we get
\begin{align*}
   &\frac{1}{2}\u_\tau^\top\nabla^2 f(\x)\u_\tau
   \leq  f(\x+\u_\tau) - f(\x) - \nabla f(\x)^\top\u_\tau + \frac{L_2}{6}\|\u_\tau\|^3 
   \leq   -2\mathcal F + \mathcal F  \leq  -\mathcal F.
\end{align*}
That is, 
\begin{align}
   \frac{\u_\tau^\top\nabla^2 f(\x)\u_\tau}{\|\u_\tau\|^2}  \leq \frac{-2\mathcal F}{(4\hat c \mathcal P)^2}\leq-\frac{\gamma}{8\hat c^2 \log(dL_1 /(\gamma\delta))} .
\end{align}
If NEON returns $0$ following Bayes theorem, it is not difficult to show that $\lambda_{\min}(\nabla^2 f(\x))\geq - \gamma$ with high probability $1-O(\delta)$ for a sufficiently small $\delta$.
\end{proof}
\subsection{Proof of Lemma~\ref{lemma:key1n}}
\begin{proof}
By using the smoothness of $\hat f_\x(\u)$, we have
\begin{align*}
 \hat f(\u_{\tau+1}) 
\leq & \hat  f(\u_\tau) + \nabla \hat f(\u_\tau)^\top (\u_{\tau+1} - \u_\tau) + \frac{L_1}{2}\|\u_{\tau+1} - \u_\tau\|^2\\
= & \hat  f(\u_\tau) - \frac{1}{\eta} \|\u_{\tau+1} - \u_\tau\|^2 + \frac{L_1}{2}\|\u_{\tau+1} - \u_\tau\|^2 \\
\leq & \hat  f(\u_\tau) -  \frac{1}{2\eta}\|\u_{\tau+1} - \u_\tau\|^2,
\end{align*}
where the first equality uses the update of $\u_{\tau+1}$ in Algorithm~\ref{alg:ncn};  the last inequality uses the fact of $\eta L_1 < 1$.
By summing up $\tau$ from $0$ to $t-1$ where $t<T$, we have
\begin{align*}
&\hat f(\u_{t}) \leq  \hat f(\u_{0}) - \frac{1}{2\eta}\sum_{\tau=0}^{t-1}\|\u_{\tau} - \u_{\tau-1}\|^2,
\end{align*}
which inplies
\begin{align*}
\frac{1}{2\eta}\sum_{\tau=0}^{t-1}\|\u_{\tau} - \u_{\tau-1}\|^2  \leq & \hat f(\u_{0}) - \hat f(\u_{t}).
\end{align*}
Hence, if $t\leq T$ then $\hat f(\u_t) - \hat f(\u_0)\geq -3\mathcal F$, i.e., $\hat f(\u_0) - \hat f(\u_t) \leq 3\mathcal F$. Then we have
\begin{align*}
&\sum_{\tau=0}^{t-1}\|\u_{\tau} - \u_{\tau-1}\| \leq \sqrt{t\sum_{\tau=0}^{t-1}\|\u_{\tau} - \u_{\tau-1}\|^2}
\leq  \sqrt{\frac{3\hat c \log(\frac{d\kappa}{\delta})}{\eta\gamma}  \frac{\eta L_1\gamma^3}{L_2^2 \log^3(\frac{d\kappa}{\delta})} 2\eta}\\
= &\sqrt{6\hat c}\sqrt{\eta L_1} \frac{\gamma}{L_2}\log^{-1}(d\kappa/\delta) = \sqrt{6\hat c}  \mathcal P.
\end{align*}
Then for all $\tau\leq t-1$, $\|\u_\tau\|\leq \sum_{k=1}^{\tau}\|\u_{k} -\u_{k-1}\| + \|\u_0\|\leq \sqrt{6\hat c} \mathcal P + \mathcal P \leq \hat c \mathcal P$, where the last inequality is due to $\hat c \geq 18$. Additionally, by the update of $\u_{\tau+1}$ in Algorithm~\ref{alg:ncn}, we have
\begin{align*}
\|\u_{\tau+1}\|\leq& (1+\eta L_1)\|\u_{\tau}\| \leq 2\hat c \mathcal P.
\end{align*}
\end{proof}

\subsection{Proof of Lemma~\ref{lemma:key2n}}
The proof is almost the same as the proof of Lemma 17 in~\citep{jin2017escape}. For completeness, we include it in this subsection.
By the way $\u_0$ is constructed,  we have $\|\u_0\|\leq r \leq \hat c\mathcal P$.
Let us define $\v_\tau = \w_\tau - \u_\tau$, then  { $\v_0 = \mu r \e_1$ (i.e., $r=\frac{\mathcal P}{\kappa}\log^{-1}(d\kappa/\delta)$)}, $\mu \in [\delta/(2\sqrt{d}), 1]$. By the update equation of $\w_{\tau+1}$ (similar to $\u_{\tau+1}$), we have
\begin{align*}
&\u_{\tau+1} + \v_{\tau+1} =\w_{\tau+1} 
=  \w_\tau - \eta (\nabla f(\x+ \w_\tau) -\nabla  f(\x)) \\
= &\u_\tau + \v_\tau - \eta (\nabla  f(\x+\u_\tau + \v_\tau ) -\nabla  f(\x)) \\
= &\u_\tau + \v_\tau - \eta (\nabla  f(\x+\u_\tau + \v_\tau ) -\nabla  f(\x+\u_\tau) + \nabla  f(\x+\u_\tau ) -\nabla  f(\x)) \\
= &\u_\tau  - \eta ( \nabla  f(\x+\u_\tau ) -\nabla  f(\x)) + \v_\tau -\eta H \v_{\tau} - \eta \left[\int_{0}^1 \nabla^2 f(\x+\u_t + \theta\v_t) \mathrm{d}\theta -H\right] \v_\tau  \\
= &\u_\tau  - \eta ( \nabla  f(\x+\u_\tau ) -\nabla f(\x)) + [I -\eta H -\eta \Delta'_\tau]\v_{\tau}
\end{align*}
where $\Delta'_\tau := \int_{0}^1 \nabla^2  f(\x+\u_t + \theta\v_t) \mathrm{d}\theta -H$. We know $\|\Delta'_\tau\| \leq L_2( \|\u_\tau\| + \|\v_\tau\|/2)$ according to the Lipschitz continuity of Hessian. 
Then we know the update of $\v_\tau$ is
\begin{align}\label{eq:v_dynamic:n}
\v_{\tau+1} = (I - \eta H - \eta \Delta'_\tau) \v_\tau
\end{align}

We first show that $\v_{\tau}, \tau<T$ is upper bounded. Due to $\|\w_0 \| \leq \|\u_0\| +\|\v_0\| \leq r + r =2r$,  following the result in Lemma \ref{lemma:key1n}, we have $\|\w_\tau\| \leq 2\hat c \mathcal P$ for all $\tau< T$. According to the condition in Lemma \ref{lemma:key2n}, we have $\|{\u_\tau}\| \leq 2\hat c \mathcal P $ for all $\tau<T$.
Thus
\begin{align} \label{eq:bound_v:n}
\|{\v_\tau}\| \leq \|{\u_\tau}\| +\|{\w_\tau}\| \leq 4\hat c \mathcal P \text{~for all~} \tau<T
\end{align}
Then we have for $\tau<T$
\begin{align*}
\|\Delta'_\tau\| \leq L_2( \|\u_\tau\| + \|\v_\tau\|/2) \leq L_2  \cdot 2\hat c \mathcal P
\end{align*}

Next, we show that $\v_{\tau}, \tau<T$ is lower bounded. To proceed the proof,  denote by $\psi_\tau$ the norm of $\v_\tau$ projected onto $\e_1$ direction and denote by $\varphi_\tau$ the norm of $\v_\tau$ projected onto the remaining subspace. The update equation of $\v_{\tau +1}$ (\ref{eq:v_dynamic:n}) implies
\begin{align*}
\psi_{\tau+1} \ge& (1+\gamma \eta)\psi_\tau - \zeta\sqrt{\psi_\tau^2 + \varphi_\tau^2},\\
\varphi_{\tau+1} \le &(1+\gamma\eta)\varphi_\tau + \zeta\sqrt{\psi_\tau^2 + \varphi_\tau^2},
\end{align*}
where $\zeta = \eta L_2  \mathcal P (4 \hat c )$. We then prove the following inequality holds for all $\tau < T$ by induction,
\begin{equation}
\varphi_\tau \le 4 \zeta \tau \cdot \psi_\tau\label{eqn:phit:n}
\end{equation}
According to the definition of $\v_0$, we have $\varphi_0 = 0$, indicating that the inequality (\ref{eqn:phit:n}) holds for $\tau=0$. Assume the inequality (\ref{eqn:phit:n}) holds for all $\tau\leq t$. We need to show that the inequality (\ref{eqn:phit:n}) holds for $t+1 \leq T$. It is easy to have the following inequalities
\begin{align*}
4\zeta(t+1)\psi_{t+1} 
\geq & 4\zeta (t+1) \left( (1+\gamma \eta)\psi_t - \zeta \sqrt{\psi_t^2 + \varphi_t^2}\right),\\
\varphi_{t+1} \leq &4 \zeta  t(1+\gamma\eta) \psi_t + \zeta \sqrt{\psi_t^2 + \varphi_t^2}.
\end{align*} 
Then we only need to show
\begin{equation*}
 \left(1+4\zeta (t+1)\right)\sqrt{\psi_t^2 + \varphi_t^2}
 \le 4 (1+\gamma \eta)\psi_t.
\end{equation*}
If we choose $\sqrt{c_{\max}}\le \frac{1}{(4\hat c)^2}$, and $\eta \le c_{\max}/L_1$, then 
\begin{align*}
4\zeta (t+1) \leq& 4\zeta T \leq 4\eta L_2  \mathcal P (4 \hat c)\cdot \hat c \mathcal J =4\sqrt{\eta L_1}(4 \hat c )\hat c\le 1.
\end{align*}
This implies
\begin{align*}
4 (1+\gamma \eta)\psi_t \geq& 4\psi_t \geq 2\sqrt{2\psi_t^2} 
\geq \left(1+4\zeta (t+1)\right)\sqrt{\psi_t^2 + \varphi_t^2}.
\end{align*}
We then finish the induction. According to (\ref{eqn:phit:n}), we get $\varphi_\tau \le 4  \zeta \tau \cdot \psi_\tau \le \psi_\tau$ so that
\begin{equation}\label{eq:growth_v:n}
\psi_{\tau+1} \geq (1+\gamma \eta)\psi_\tau - \sqrt{2}\zeta\psi_\tau \geq (1+\frac{\gamma \eta}{2})\psi_\tau, 
\end{equation}
where the last inequality is due to $\zeta = \eta L_2 \mathcal P(4 \hat c ) \leq  \sqrt{c_{\max}}(4 \hat c ) \gamma \eta \cdot\log^{-1}(d\kappa/\delta)  < \frac{\gamma \eta}{4\hat c} < \frac{\gamma \eta}{2\sqrt{2}}$ with $\eta \le c_{\max}/ L_1$, $\log(d\kappa/\delta) \geq 1$, and $\hat c > 1$.

By the inequalities (\ref{eq:bound_v:n}) and (\ref{eq:growth_v:n}), for all $\tau<T$, we get
\begin{align*}
4( \mathcal P \cdot \hat c)
\ge &\|\v_\tau\| \ge \psi_\tau \ge (1+\frac{\gamma \eta}{2})^\tau \psi_0=(1+\frac{\gamma \eta}{2})^\tau \mu r \\
=&(1+\frac{\gamma \eta}{2})^\tau \mu\frac{\mathcal P}{\kappa}\log^{-1}(d\kappa/\delta) 
\ge (1+\frac{\gamma \eta}{2})^\tau \frac{\delta}{2\sqrt{d}}\frac{\mathcal P}{\kappa}\log^{-1}(d\kappa/\delta).
\end{align*}
Then
\begin{align*}
&T < \frac{\log (8 \frac{\kappa\sqrt{d}}{\delta}\cdot \hat c \log(d\kappa/\delta))}{\log (1+\frac{\gamma \eta}{2})}
\leq \frac{5}{2} \frac{\log (8 \frac{\kappa\sqrt{d}}{\delta}\cdot  \hat c \log(d\kappa/\delta))}{\gamma\eta}
\leq \frac{5}{2}(2 + \log (8 \hat c))\mathcal J,
\end{align*}
where the last inequality uses the facts that $\delta\in (0, \frac{d\kappa}{e}]$ and $\log (d\kappa/\delta) \ge 1$.
By choosing a large enough constant $\hat c$ satisfying $ \frac{5}{2}(2 + \log (8 \hat c)) \leq \hat c$ ($\hat c \geq 18$ works), we have
$T <  \hat c \mathcal J $ and complete the proof.

\section{Proof of Theorem~\ref{thm:main:AGD}}
We first consider the case when NCFind is executed, i.e., there exists $\tau>0$
\begin{align}\label{cond:largeNC}
\hat f(\y_{\tau}) < \hat f(\u_{\tau}) + \nabla \hat f(\u_\tau)^\top (\y_{\tau} - \u_\tau) - \frac{\gamma}{2}\|\y_{\tau} - \u_\tau\|^2,
\end{align}
and NCfind returns a non-zero vector $\v$. We will prove that $\v^{\top}\nabla^2 f(\x)\v/\|\v\|_2^2\leq -\widetilde\Omega(\gamma)$. There are two scenarios we need to consider. 

{\bf Scenario (1)} there exists $j \leq \tau$ such that 
\begin{align*}
j =\arg \min_{0\leq k \leq \tau} \{ \|\y_{k} - \u_k\|\geq \zeta \sqrt{6\eta\mathcal{F}} \}.
\end{align*}
Since $\u_k - \y_k = \zeta(\y_k - \y_{k-1})$, then it is equivalent to
\begin{align*}
j =\arg \min_{0\leq k \leq \tau} \{ \|\y_{k} - \y_{k-1}\|\geq \sqrt{6\eta\mathcal{F}} \}.
\end{align*}
According to the definition of $j$, we know
for all $0\leq k \leq j-1$, $\|\y_{k} - \y_{k-1}\|\leq \sqrt{6\eta\mathcal{F}}$ and $\|\y_{j} - \y_{j-1}\|\geq \sqrt{6\eta\mathcal{F}}$.
Thus, 
\begin{align*}
\|\y_{j-1}\| \leq &\sum_{k=1}^{j-1}\|\y_{k} - \y_{k-1}\| + \|\y_0\| 
\leq  j \sqrt{6\eta\mathcal{F}} + 2\mathcal P
\leq  \hat c \mathcal J \sqrt{6\eta\mathcal{F}} + 2\mathcal P  = (\sqrt{6}\hat c+2)\mathcal P .
\end{align*}
Similarly, for all $0\leq k \leq j-1$, we can show
\begin{align*}
\|\y_{k}\| \leq (\sqrt{6}\hat c+2)\mathcal P.
\end{align*}
By the update of NAG, we have $\|\u_{j-1} - \y_{j-1}\| = \zeta \|\y_{j-1} - \y_{j-2}\|$. Then
\begin{align*}
\|\u_{j-1}\| \leq&  \|\u_{j-1} - \y_{j-1}\| + \|\y_{j-1}\|= \zeta \|\y_{j-1} - \y_{j-2}\| + \|\y_{j-1}\|\\
 \leq & \|\y_{j-1} - \y_{j-2}\| + \|\y_{j-1}\| 
 \leq  \sqrt{6\eta\mathcal{F}} +(\sqrt{6}\hat c+2)\mathcal P  \leq  (\sqrt{6}\hat c+3)\mathcal P.
\end{align*}
Similarly, for all $0\leq k \leq j-1$, we can show
\begin{align*}
\|\u_{k}\| \leq  (\sqrt{6}\hat c+3)\mathcal P.
\end{align*}
Next we bound $\|\y_j\|$. Since
$\|\y_j-\y_{j-1}\| =  \|\u_{\tau-1} - \u_{\tau-2}  + \eta(\nabla \hat f(\u_{\tau-2}) - \nabla\hat f(\u_{\tau-1}))\|\leq (1 + \eta L_1)\|\u_{j-1} - \u_{j-2}\|$, then
\begin{align*}
\|\y_j\| \leq &(1 + \eta L_1)\|\u_{j-1} - \u_{j-2}\| + \|\y_{j-1}\| 
\leq 5/4\|\u_{j-1} - \u_{j-2}\| + \|\y_{j-1}\| \\
\leq&5/4\|\u_{j-1}\| +5/4\| \u_{j-2}\| + \|\y_{j-1}\| \leq (7\sqrt{6}\hat c/2+19/2)\mathcal P,
\end{align*}
where the first inequality uses $\eta L_1 \leq \frac{1}{4}$.
By using the relationship $\|\u_{j} - \y_{j}\| = \zeta \|\y_{j} - \y_{j-1}\|$ in NAG, we get
\begin{align*}
\|\u_{j}\| \leq&  \zeta \|\y_{j} - \y_{j-1}\|+ \|\y_{j}\|
\leq\|\y_{j} - \y_{j-1}\|+ \|\y_{j}\|
\leq \|\y_{j-1}\|+ 2\|\y_{j}\| \leq (8\sqrt{6}\hat c+21)\mathcal P.
\end{align*}
Therefore, by choosing $\hat c \geq 15$, we have shown
\begin{align}\label{lem1:AGD:upp:case1}
\|\y_{k}\| \leq 10\hat c\mathcal P~\text{and}~\|\u_{k}\| \leq 21\hat c\mathcal P,~0\leq k \leq j.
\end{align}
Next, we will show that $\y_j$ is a NC, i.e., $\y_j^{\top}\nabla^2 f(\x)\y_j\leq - \widetilde\Omega(\gamma^3)$.
We know that the inequality (\ref{cond:largeNC}) doesn't hold for all $0\leq k < \tau$, 
then by the analysis of Lemma~\ref{lemma:key1:AGD} (in particular inequality~(\ref{eqn:lm8key}) due to that~(\ref{cond:largeNC}) does not hold any  $k=0,\ldots, \tau-1$ ), we have
\begin{align*}
& \hat f(\y_{k+1}) + \frac{1}{2\eta} \| \y_{k+1} - \y_{k}\|^2  
\leq \hat f(\y_k)  + \frac{1}{2\eta} \| \y_k - \y_{k-1}\|^2.
\end{align*}
By summing up over $k$ from $0$ to $j-1$, we get
\begin{align*}
& \hat f(\y_j) + \frac{1}{2\eta} \| \y_j - \y_{j-1}\|^2 \leq \hat f(\y_0).
\end{align*}
Combining with $\|\y_{j} - \y_{j-1}\|\geq \sqrt{6\eta\mathcal{F}}$, we have
\begin{align*}
f(\x + \y_j) - f(\x)- \y_j^{\top} \nabla f(\x)
\leq& f(\x + \y_{0}) - f(\x)- \y_{0}^{\top}\nabla f(\x) - 3\mathcal F\\
\leq& -3\mathcal F + \frac{L_1}{2}\|\y_0\|^2\leq -3\mathcal F + \mathcal F\leq -2\mathcal F,\end{align*}
By the Lipschitz continuity of Hessian, we have 
\begin{align*}
&\left| f(\x+\y_j) - f(\x) - \nabla f(\x)^\top\y_j - \frac{1}{2}\y_j^\top\nabla^2 f(\x)\y_j\right|
 \leq \frac{L_2}{6}\|\y_j\|^3.
\end{align*}
Then
\begin{align*}
  & \frac{1}{2}\y_j^\top\nabla^2 f(\x)\y_j 
  \leq f(\x+\y_j) - f(\x) - \nabla f(\x)^\top\y_j + \frac{L_2}{6}\|\y_j\|^3 
   \leq  -2\mathcal F + \mathcal F  \leq  -\mathcal F.
\end{align*}
Therefore, combining with (\ref{lem1:AGD:upp:case1}) we get
\begin{align}\label{neon2_upp_1}
\frac{\y_j^\top\nabla^2 f(\x)\y_j}{\|\y_j\|^2}\leq -\frac{2\mathcal F}{(10\hat c\mathcal P)^2}
   = - \frac{\gamma}{50\hat c^2 \log(dL_1 /(\gamma\delta))}\leq - \frac{\gamma}{72\hat c^2 \log(dL_1 /(\gamma\delta))}.
\end{align}

{\bf Scenario (2)} 
For all $0\leq k \leq \tau $, $\|\y_{k} - \y_{k-1}\|\leq \sqrt{6\eta\mathcal{F}} $. 
Thus, 
\begin{align*}
\|\y_{\tau}\| \leq &\sum_{k=1}^{\tau}\|\y_{k} - \y_{k-1}\| + \|\y_0\| \leq  j \sqrt{6\eta\mathcal{F}} + 2\mathcal P
\leq  \hat c \mathcal J \sqrt{6\eta\mathcal{F}} + 2\mathcal P  = (\sqrt{6}\hat c+2)\mathcal P \leq  3\hat c\mathcal P.
\end{align*}
Similarly, we can show
\begin{align}\label{lem1:AGD:upp:case21}
\|\y_{k}\| \leq 3\hat c\mathcal P,~0\leq k \leq \tau,
\end{align}
where $\hat c \geq 15$.
By the update of NAG, we have $\|\u_{j-1} - \y_{j-1}\| = \zeta \|\y_{j-1} - \y_{j-2}\|$. Then
\begin{align*}
\|\u_{\tau}\| \leq&  \|\u_{\tau} - \y_{\tau}\| + \|\y_{\tau}\|
 = \zeta \|\y_{\tau} - \y_{\tau-1}\| + \|\y_{\tau}\|\\
 \leq & \|\y_{\tau} - \y_{\tau-1}\| + \|\y_{\tau}\|
 \leq  \sqrt{6\eta\mathcal{F}} +(\sqrt{6}\hat c+2)\mathcal P  \leq  (\sqrt{6}\hat c+3)\mathcal P\leq  3\hat c\mathcal P.
\end{align*}

Similarly, we can show
\begin{align}\label{lem1:AGD:upp:case2}
\|\u_{k}\| \leq 3\hat c\mathcal P,~0\leq k \leq \tau.
\end{align}
In addition, we also have
\begin{align}\label{lem1:AGD:upp:case22}
\|\y_{k}-\u_k\| \leq 6\hat c\mathcal P,~0\leq k \leq \tau.
\end{align}
 By expanding $\hat f(\y_{\tau})$ in a Taylor series, we get
\begin{align*}
    \hat f(\y_\tau) =&\hat f(\u_{\tau}) + \nabla \hat f(\u_\tau)^\top (\y_{\tau}- \u_\tau)+ \frac{1}{2}(\y_{\tau}- \u_\tau)^\top\nabla^2\hat f(\pi_\tau) (\y_{\tau}- \u_\tau),
\end{align*}
where $\pi_\tau=\theta' \u_\tau + (1-\theta')\y_\tau$ and $0\leq\theta'\leq 1$. By inequality (\ref{cond:largeNC}), we then have
\begin{align*}
    &\frac{1}{2}(\y_{\tau}- \u_\tau)^\top\nabla^2\hat f(\pi_\tau) (\y_{\tau}- \u_\tau)
    < -\frac{\gamma}{2}\|\y_\tau-\u_\tau\|^2.
\end{align*}
It is clear that $\y_{\tau}-\u_{\tau}\neq0$.  Then we have
\begin{align*}
    (\y_{\tau}- \u_\tau)^\top\nabla^2 f(\x) (\y_{\tau}- \u_\tau)
    \leq&  -\gamma\|\y_\tau-\u_\tau\|^2+ (\y_{\tau}- \u_\tau)^\top[\nabla^2f(\x) - \nabla^2\hat f(\pi_\tau)] (\y_{\tau}- \u_\tau)\\
    \leq& -\gamma\|\y_\tau-\u_\tau\|^2 + L_2\|\theta' \u_\tau + (1-\theta')\y_\tau\|\|\y_{\tau}- \u_\tau\|^2\\
    \leq& [-\gamma+L_2(\|\u_\tau\| + \|\y_\tau\|)]\|\y_\tau-\u_\tau\|^2\\
    \leq& -\frac{ \gamma}{2}\|\y_\tau-\u_\tau\|^2.
\end{align*}
Therefore,
\begin{align}\label{neon2_upp_2}
\frac{(\y_{\tau}- \u_\tau)^\top\nabla^2 f(x) (\y_{\tau}- \u_\tau)}{\|\y_{\tau}- \u_\tau\|^2}\leq -\frac{\gamma}{2} \leq - \frac{\gamma}{72\hat c^2 \log(dL_1 /(\gamma\delta))}. 
\end{align}

Next, we will prove Theorem~\ref{thm:main:AGD} under the condition that NCFind is not excuted, i.e., (\ref{cond:largeNC}) does not happen. 
\begin{lemma}\label{lemma:key1:AGD}
Define $ \mathcal F=\eta L_1 \frac{\gamma^3}{L_2^2}\cdot \log^{-3}(d\kappa /\delta)$,
$\mathcal P=\sqrt{\eta L_1}\frac{\gamma}{L_2} \cdot \log^{-1}(d\kappa /\delta)$, 
$\mathcal J=\sqrt{\frac{\log (d\kappa /\delta)}{\eta \gamma}}$.
{Suppose the inequality (\ref{cond:largeNC}) doesn't hold.} There exist a universal constant $c_{\max}$, $\hat c$: for $\x$ satisfies the condition that $\lambda_{\min}(\nabla^2 f(\x))\leq - \gamma$,  any $\|\u_0\| \leq 2r$, where
$r=\frac{\mathcal P}{\kappa}\log^{-1}(d\kappa /\delta)$,
and  
\begin{align*}
T = \min\left\{ \inf_\tau\{\tau|\hat  f(\y_\tau)-\hat f(\y_0)\leq -3\mathcal F\}, \hat c \mathcal J \right\},
\end{align*}
If $\zeta$ satisfies $1 - \frac{\sqrt{1 + 3\eta L_1}}{1-(1-s)\eta L_1} \zeta \geq \sqrt{\eta\gamma}$,
then for any $\eta\leq c_{\max}/L_1$, we have for all $\tau<T$ that $\|\u_\tau\|\leq 6\hat c \mathcal P$. 
\end{lemma}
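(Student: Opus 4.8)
The plan is to carry over the ``Hamiltonian monotonicity'' argument behind Lemma~\ref{lemma:key1n} to the accelerated iteration, with the objective value replaced by the energy
\[
E_\tau := \hat f(\y_\tau) + \frac{1}{2\eta}\|\y_\tau-\y_{\tau-1}\|^2,\qquad \y_{-1}:=\y_0 .
\]
First I would prove the one-step descent $E_{\tau+1}\le E_\tau$ for every $\tau<T$. Since $\hat f$ inherits an $L_1$-Lipschitz gradient from $f$ (indeed $\nabla\hat f(\u)=\nabla f(\x+\u)-\nabla f(\x)$), the gradient step $\y_{\tau+1}=\u_\tau-\eta\nabla\hat f(\u_\tau)$ gives $\hat f(\y_{\tau+1})\le \hat f(\u_\tau)-\eta(1-\frac{\eta L_1}{2})\|\nabla\hat f(\u_\tau)\|^2$. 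Because NCFind never fires, inequality~(\ref{cond:largeNC}) fails for all $\tau<T$, so I may trade $\hat f(\u_\tau)$ for $\hat f(\y_\tau)$ at the cost of the mild-curvature term, $\hat f(\u_\tau)\le \hat f(\y_\tau)-\nabla\hat f(\u_\tau)^\top(\y_\tau-\u_\tau)+\frac{\gamma}{2}\|\y_\tau-\u_\tau\|^2$. Substituting the momentum identity $\y_\tau-\u_\tau=-\zeta(\y_\tau-\y_{\tau-1})$ together with $\|\y_{\tau+1}-\y_\tau\|^2=\|\zeta(\y_\tau-\y_{\tau-1})-\eta\nabla\hat f(\u_\tau)\|^2$, the two occurrences of $\zeta\,\nabla\hat f(\u_\tau)^\top(\y_\tau-\y_{\tau-1})$ cancel, and adding $\frac{1}{2\eta}\|\y_{\tau+1}-\y_\tau\|^2$ to both sides produces
\[
E_{\tau+1}\le E_\tau-\frac{1-\zeta^2(1+\eta\gamma)}{2\eta}\|\y_\tau-\y_{\tau-1}\|^2-\frac{\eta(1-\eta L_1)}{2}\|\nabla\hat f(\u_\tau)\|^2 .
\]
The hypothesis $\eta\le c_{\max}/L_1$ makes the last coefficient nonnegative, and the stated condition on $\zeta$ (which for $\zeta=1-\sqrt{\eta\gamma}$ reduces to $\zeta^2(1+\eta\gamma)\le 1$, the extra $\frac{\sqrt{1+3\eta L_1}}{1-(1-s)\eta L_1}$ factor simply absorbing the lower-order $\eta L_1$ corrections kept in the smoothness estimates) makes the first coefficient nonnegative, and in fact at least $\Omega(\sqrt{\gamma/\eta})$. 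This is exactly the inequality~(\ref{eqn:lm8key}) invoked later.

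Next I would telescope. The initialization $\|\y_0\|\le 2r$ with $r=\frac{\mathcal P}{\kappa}\log^{-1}(d\kappa/\delta)$ gives $E_0=\hat f(\y_0)\le\frac{L_1}{2}\|\y_0\|^2\le\mathcal F$ by the choice of $r$; and for $\tau<T$ the stopping rule forces $\hat f(\y_\tau)\ge\hat f(\y_0)-3\mathcal F\ge-O(\mathcal F)$, hence $E_\tau\ge-O(\mathcal F)$. Summing the one-step bound over $k=0,\dots,\tau-1$ then gives
\[
\frac{1-\zeta^2(1+\eta\gamma)}{2\eta}\sum_{k=0}^{\tau-1}\|\y_k-\y_{k-1}\|^2\le E_0-E_\tau=O(\mathcal F),
\]
so that $\sum_{k=0}^{\tau-1}\|\y_k-\y_{k-1}\|^2=O(\sqrt{\eta/\gamma}\,\mathcal F)$. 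Applying Cauchy--Schwarz with $\tau\le T\le\hat c\mathcal J$ and $\mathcal J=\sqrt{\log(d\kappa/\delta)/(\eta\gamma)}$, the quantity $\mathcal J\cdot\sqrt{\eta/\gamma}\cdot\mathcal F$ collapses to $O(\mathcal P^2)$ --- the accelerated analogue of the identity $\mathcal J\eta\mathcal F=\mathcal P^2$ used in the gradient-descent proof --- so $\sum_{k=0}^{\tau-1}\|\y_k-\y_{k-1}\|=O(\sqrt{\hat c}\,\mathcal P)$. Consequently $\|\y_\tau\|\le\|\y_0\|+\sum_k\|\y_k-\y_{k-1}\|\le 2\mathcal P+O(\sqrt{\hat c}\,\mathcal P)\le\hat c\mathcal P$ once $\hat c$ is taken large enough (the constant threshold on $\hat c$ is fixed at the end exactly as in Lemma~\ref{lemma:key1n}), and finally $\|\u_\tau\|\le\|\y_\tau\|+\zeta\|\y_\tau-\y_{\tau-1}\|\le\|\y_\tau\|+\sum_k\|\y_k-\y_{k-1}\|\le 6\hat c\mathcal P$, which is the claim. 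The statement is deterministic given the location of $\x$ and the initialization; randomness enters only the companion lemma, through the distribution of $\u_0$.

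The step I expect to be the main obstacle is the energy recursion itself: one must check that the momentum contribution enters with precisely the coefficient $\zeta^2(1+\eta\gamma)$ in front of $\|\y_\tau-\y_{\tau-1}\|^2$, so that the inner-product cross terms coming from the NCFind-failure bound and from the expansion of $\|\y_{\tau+1}-\y_\tau\|^2$ annihilate, and that the $\eta L_1$ corrections retained throughout are what dictate the somewhat awkward form $1-\frac{\sqrt{1+3\eta L_1}}{1-(1-s)\eta L_1}\zeta\ge\sqrt{\eta\gamma}$ of the hypothesis on $\zeta$ rather than the clean $\zeta=1-\sqrt{\eta\gamma}$. Once monotonicity of $E_\tau$ is established, the remaining telescoping-plus-Cauchy--Schwarz bookkeeping mirrors the proof of Lemma~\ref{lemma:key1n} almost verbatim, the only genuine difference being the square-root scaling of $\mathcal J$ in the accelerated regime.
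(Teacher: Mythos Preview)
Your proposal is correct and follows essentially the same route as the paper: defining the Hamiltonian $E_\tau=\hat f(\y_\tau)+\frac{1}{2\eta}\|\y_\tau-\y_{\tau-1}\|^2$, combining the descent from the gradient step with the failure of~(\ref{cond:largeNC}) to obtain the one-step inequality $E_{\tau+1}\le E_\tau-\Omega(\sqrt{\gamma/\eta})\|\y_\tau-\y_{\tau-1}\|^2$ (this is exactly~(\ref{eqn:lm8key})), then telescoping, applying Cauchy--Schwarz with $\tau\le\hat c\mathcal J$, and converting the path-length bound on $\{\y_\tau\}$ into the claimed bound on $\|\u_\tau\|$. Your coefficient $(1+\eta\gamma)\zeta^2$ is in fact slightly sharper than the paper's $(1+2\eta\gamma)\zeta^2$, and your final step $\|\u_\tau\|\le\|\y_\tau\|+\zeta\|\y_\tau-\y_{\tau-1}\|$ differs cosmetically from the paper's route through $\sum_k\|\u_k-\u_{k-1}\|\le\sum_k\|\y_{k+1}-\y_k\|$, but these are inessential variations on the same argument.
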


\begin{lemma}\label{lemma:key2:AGD}
There exist constant $c_{\max}, \hat c$ and a vector $\e$ with $\|\e\|\leq 1$ such that: for $\x$ satisfies the condition that $\lambda_{\min}(\nabla^2 f(\x))\leq - \gamma$
define a sequence $\w_t$ similar to $\u_t$ except $\w_0  =\u_0+ \mu r \e$, where $\mu\in[\delta/2\sqrt{d}, 1]$, and define a difference sequence  $\v_t = \w_t - \u_t$. Let 
\begin{align*}
T = \min\{\inf_\tau \{\tau| \hat f_\x(\y_\tau) - \hat f_\x(\y_0)\leq -3\mathcal F\}, \hat c\mathcal J\}.
\end{align*}
where $\y_\tau$ is the sequence generated by~(\ref{eqn:neonpn}) starting with $\w_0$. 
Then for any $\eta\leq c_{\max}/L_1$, if $\|\u_\tau\|\leq 6\hat c\mathcal P$ for all $\tau<T$, then $T<\hat c\mathcal J$.
\end{lemma}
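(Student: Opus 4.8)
The plan is to run the gradient-descent argument behind Lemma~\ref{lemma:key2n} with the two-step Nesterov recursion replacing the one-step map. Write $H=\nabla^2 f(\x)$, take $\e=\e_1$ to be a unit eigenvector of $H$ for its minimum eigenvalue $\lambda_{\min}\le-\gamma$ (this furnishes the vector $\e$ the lemma claims to exist), and let $\v_\tau=\w_\tau-\u_\tau$, so $\v_0=\mu r\e_1$. Subtracting the two copies of~(\ref{eqn:neonpn}) run from $\u_0$ and from $\w_0$, and writing the gradient difference in mean-value form $\nabla\hat f(\w_\tau)-\nabla\hat f(\u_\tau)=(H+\Delta'_\tau)\v_\tau$ with $\Delta'_\tau=\int_0^1[\nabla^2 f(\x+\u_\tau+\theta\v_\tau)-H]\,\mathrm d\theta$ and $\|\Delta'_\tau\|\le L_2(\|\u_\tau\|+\frac12\|\v_\tau\|)$ (Lipschitz Hessian), one gets the momentum analogue of~(\ref{eq:v_dynamic:n}):
\[
\v_{\tau+1}=(1+\zeta)(I-\eta H-\eta\Delta'_\tau)\,\v_\tau-\zeta\,(I-\eta H-\eta\Delta'_{\tau-1})\,\v_{\tau-1}.
\]
Since we are in the branch in which~(\ref{cond:largeNC}) never triggers, Lemma~\ref{lemma:key1:AGD} applies verbatim to the $\w$-run (its start obeys $\|\w_0\|\le\|\u_0\|+\mu r\le 2r$), giving $\|\w_\tau\|\le 6\hat c\mathcal P$ for all $\tau<T$; combined with the hypothesis $\|\u_\tau\|\le 6\hat c\mathcal P$ this is the a priori upper bound $\|\v_\tau\|\le 12\hat c\mathcal P$, whence $\eta\|\Delta'_\tau\|\le\eta L_2\cdot O(\hat c)\mathcal P = O(\hat c)\sqrt{\eta L_1}\,\eta\gamma\,\log^{-1}(d\kappa/\delta)\le\frac18\eta\gamma$ once $c_{\max}$ is small enough relative to $\hat c$.

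The core is a matching lower bound showing $\v_\tau$ grows geometrically at the \emph{accelerated} rate $1+\Omega(\sqrt{\eta\gamma})$. Following Lemma~\ref{lemma:key2n}, let $\psi_\tau$ be the component of $\v_\tau$ along $\e_1$ and $\varphi_\tau$ the norm of its projection onto $\e_1^{\perp}$. Because $\e_1^{\perp}$ is $(I-\eta H)$-invariant and the dominant root of the momentum block attached to a negative eigenvalue is monotone in the magnitude of that eigenvalue, the homogeneous growth factor on $\e_1^{\perp}$ is no larger than the one along $\e_1$ (up to the bounded non-normality of the $2\times 2$ blocks); so, up to the perturbation terms bounded above,
\[
\psi_{\tau+1}\ \ge\ (1+\zeta)a\,\psi_\tau-\zeta a\,\psi_{\tau-1}-O(\eta\|\Delta'_\tau\|)\sqrt{\psi_\tau^2+\varphi_\tau^2},\qquad a:=1-\eta\lambda_{\min}\ge 1+\eta\gamma,
\]
and $\varphi_{\tau+1}\le (1+\zeta)a\,\varphi_\tau+O(\eta\|\Delta'_\tau\|)\sqrt{\psi_\tau^2+\varphi_\tau^2}$ with the same leading coefficient (or smaller). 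With $\zeta=1-\sqrt{\eta\gamma}$ the characteristic roots of $p_{\tau+1}=(1+\zeta)a\,p_\tau-\zeta a\,p_{\tau-1}$ are real and positive, the larger one being $\rho_+\ge 1+c_0\sqrt{\eta\gamma}$ for a universal $c_0>0$ --- precisely the ``eigen-gap $\ge\sqrt{\eta\gamma}/2$'' computation sketched before the theorem. I would then (i) mirror the induction~(\ref{eqn:phit:n}): since the leak coefficient $O(\eta\|\Delta'_\tau\|)\le\frac18\eta\gamma$ accumulates to at most a constant over the horizon $T$ (this is where $c_{\max}$ is chosen, exactly as in~(\ref{eq:growth_v:n})), prove $\varphi_\tau\lesssim\psi_\tau$ for all $\tau<T$, which makes the $\sqrt{\psi_\tau^2+\varphi_\tau^2}$ coupling harmless; and (ii) from $\psi_0>0$ and $\psi_1>\psi_0$ together with the telescoping identity $\psi_{\tau+1}-\rho_-\psi_\tau=\rho_+(\psi_\tau-\rho_-\psi_{\tau-1})$ for the roots $\rho_\pm$ (perturbed by $O(\eta\|\Delta'_\tau\|)$, which still keeps $\rho_+\ge 1+\frac12 c_0\sqrt{\eta\gamma}$ because the perturbation is a small fraction of the signal $(1+\zeta)a-1=\Theta(\eta\gamma)$), conclude the analogue of~(\ref{eq:growth_v:n}): $\psi_\tau\ge(1+\frac12 c_0\sqrt{\eta\gamma})^\tau\,\mu r$, with $\mu r\ge\frac{\delta}{2\sqrt d}\frac{\mathcal P}{\kappa}\log^{-1}(d\kappa/\delta)$.

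Combining the two bounds, for every $\tau<T$ we get $(1+\frac12 c_0\sqrt{\eta\gamma})^\tau\le 12\hat c\,\mathcal P/(\mu r)\le O(\hat c)\,\sqrt d\,\kappa\,\delta^{-1}\log(d\kappa/\delta)$; taking logarithms and using $\log(1+x)\ge\frac25 x$ gives $\tau=\widetilde O(1/\sqrt{\eta\gamma})$, and absorbing the $\log(d\kappa/\delta)$, $\log\log(d\kappa/\delta)$ and $\log\hat c$ factors into a sufficiently large constant $\hat c$ --- exactly as in the closing lines of the proof of Lemma~\ref{lemma:key2n} --- yields $T<\hat c\mathcal J$. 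The main obstacle is the second paragraph: establishing the dominant-root lower bound $\rho_+\ge 1+\Omega(\sqrt{\eta\gamma})$ for the \emph{perturbed} two-step recursion while simultaneously keeping the orthogonal component $\varphi_\tau$ from overtaking $\psi_\tau$. Compared with the gradient-descent case this is delicate because the Nesterov recursion is only marginally stable, the Hessian perturbation $\Delta'_\tau$ is of the same order ($\sim\gamma$) as the curvature driving the amplification, and the two directions are coupled through it; hence the induction controlling $\varphi_\tau/\psi_\tau$ must be carried out jointly with the growth estimate and tied quantitatively to the smallness of $\eta\|\Delta'_\tau\|$ measured against $\sqrt{\eta\gamma}$ --- this is the momentum analogue of the $\psi/\varphi$ bookkeeping of Lemma~\ref{lemma:key2n}, following the route of~\citep{jin2017escape}.
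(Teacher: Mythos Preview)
Your route differs from the paper's and has a concrete gap at the $\varphi$-bound. You try to lift the $\psi/\varphi$ scalar decomposition of Lemma~\ref{lemma:key2n} to the two-step NAG recursion; the paper instead stacks $(\v_{\tau+1},\v_\tau)$ into $\R^{2d}$, writes the linear part as powers of the $2d\times 2d$ block matrix $A$, and proves by induction (inequality~(\ref{ineq:key1})) that the accumulated nonlinear error $\eta\sum_{i}A^{\tau-1-i}[\delta_i;0]$ stays below half of the homogeneous growth $A^\tau[\v_0;\v_0]$. Because $\v_0=\mu r\e_1$, that homogeneous part lives entirely in the $\e_1$-block, so the paper never needs an orthogonal norm $\varphi_\tau$ at all; the spectral norms of $[I\ 0]A^k[I;0]$ that arise are handled via structural properties of the scalar NAG sequences $a_\tau^{(1)},b_\tau^{(1)}$ borrowed from~\citep{AGNON} (their Lemmas~23, 30, 32, 33), together with the convolution-type estimate $\sum_{i}a_{\tau-i}^{(1)}(a_i^{(1)}-b_i^{(1)})\lesssim(\hat c\mathcal J)^2(a_{\tau+1}^{(1)}-b_{\tau+1}^{(1)})$ and the final bookkeeping $\chi\cdot(\hat c\mathcal J)^2\le\tfrac12$.

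The specific error in your plan is the one-step inequality you write for $\varphi_\tau$. From $\v_{\tau+1}=(1+\zeta)(I-\eta H)\v_\tau-\zeta(I-\eta H)\v_{\tau-1}+(\text{error})$, taking norms on $\e_1^\perp$ turns the minus into a plus:
\[
\varphi_{\tau+1}\ \le\ (1+\zeta)a\,\varphi_\tau\ +\ \zeta a\,\varphi_{\tau-1}\ +\ O(\eta\|\Delta'\|)\,\|\v\|,
\]
not the first-order bound you stated. The majorizing recursion $q_{\tau+1}=(1+\zeta)a\,q_\tau+\zeta a\,q_{\tau-1}$ has dominant root $\approx 1+\sqrt{2}$ when $\zeta,a\approx 1$, an $O(1)$ growth factor that swamps the $1+\Theta(\sqrt{\eta\gamma})$ growth of $\psi_\tau$, so the $\varphi_\tau\lesssim\psi_\tau$ induction collapses. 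The non-normality you flag as a caveat is precisely the obstruction, not a constant to be absorbed: even eigendirection-by-eigendirection on $\e_1^\perp$, the $2\times 2$ momentum blocks produce transient amplification of order $\tau$, and once $\varphi_\tau$ feeds back into $\psi_\tau$ through $\Delta'_\tau$ this polynomial factor is not benign over the horizon $\hat c\mathcal J$. The paper's matrix-power induction is designed exactly to absorb these transients; your scalar scheme, as written, does not.
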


\subsection{Continuing the Proof of Theorem~\ref{thm:main:AGD}}
\begin{proof}
When NCFind doesn't  terminate the algorithm, then we use Lemma~\ref{lemma:key1:AGD} and Lemma~\ref{lemma:key2:AGD} to prove that NEON$^+$ finds NC with a high probability.
Let us define $T_*=\hat c \mathcal J$ and $T' = \inf_{\tau}\{\tau| \hat f_{\x}(\y_\tau) - \hat f_{\x}(\y_0) \leq -3\mathcal F\}$, and consider following two scenarios:\\ \\
{\bf Scenario (1) $T'\leq T_*$:}  By the unified update in~(\ref{eqn:neonpn}), we have
\begin{align*}
    \y_{\tau+1} = \u_\tau - \eta\nabla \hat f_\x(\u_\tau)
\end{align*}
Then  
\[
\|\y_{T'}\|\leq (1+\eta L_1)\|\u_{T'-1}\|\leq 12\hat c \mathcal P\triangleq U
\]
By employing Lemma~\ref{lemma:key1:AGD}, we have
\begin{align}\label{eqn:ut:AGD}
f(\x + \y_{T'}) - f(\x)- \y_{T'}^{\top} \nabla f(\x) \leq& f(\x + \y_{0}) - f(\x)- \y_{0}^{\top}\nabla f(\x) - 3\mathcal F\nonumber\\
\leq& -3\mathcal F + \frac{L_1}{2}\|\y_0\|^2\leq -3\mathcal F + \mathcal F\leq -2\mathcal F,
\end{align}
Therefore we have
\begin{align*}
 \min_{1\leq \tau'\leq T_* \atop \|\y_{\tau'}\|\leq U, \y_\tau\sim \u_0}f(\x + \y_{\tau'}) - f(\x) -\y_{\tau'}^{\top} \nabla f(\x)
 \leq  f(\x + \y_{T'}) - f(\x)-\y_{T'}^{\top} \nabla f(\x) \leq  -2\mathcal F.
\end{align*}
where $\y_\tau\sim\u_0$ means that the generated sequence is starting from $\u_0$. 

{\bf Scenario (2) $T' > T_*$:} By Lemma~\ref{lemma:key1:AGD}, we have $\|\u_{\tau}\| \leq 6\mathcal P\hat c$ for all $\tau \leq T_*$. Let define $T'' = \inf_\tau \{\tau| \hat f_\x(\y_\tau) - \hat f_\x(\y_0) \leq -3\mathcal F\}$. Here, we abuse the same notation $\y_\tau$ to denote the generated sequence starting from $\w_0$.  By Lemma~\ref{lemma:key2:AGD}, we konw $T''\leq T_*$.  From the proof of Lemma~\ref{lemma:key2:AGD}, we also know that $\|\w_{T''-1}\|\leq 6\mathcal P \hat c$. Hence $\|\y_{T''}\|\leq 12\hat c \mathcal P$.  Similar to~(\ref{eqn:ut:AGD}), we have 
\begin{align*}
\min_{1\leq \tau'\leq T_* \atop  \|\y_{\tau'}\|\leq U, \y_\tau\sim \w_0} f(\x + \y_{\tau'}) - f(\x) -\y_{\tau'}^{\top} \nabla f(\x) \leq -2\mathcal F.
\end{align*}
Therefore,
\begin{align*}
\min&\left\{ \min_{1\leq\tau'\leq T_*\atop \|\y_{\tau'}\|\leq U, \y_\tau\sim \u_0}f(\x + \y_{\tau'}) - f(\x) -\y_{\tau'}^{\top} \nabla f(\x),\right.\\
&\left.\min_{1\leq\tau'\leq T_*\atop \|\y_{\tau'}\|\leq U, \y_\tau\sim\w_0}f(\x + \y_{\tau'}) - f(\x) -\y_{\tau'}^{\top} \nabla f(\x) \right\} 
\leq -2\mathcal F.
\end{align*}
Please recall that $\u_0$ is random vector following an uniform distribution over $\mathbb B_0(r)$ with radius $r = \mathcal P/(\kappa \cdot \log(d\kappa/\delta))$. Let $\mathcal X_s \subset \mathbb B_0(r)$ be the set of bad initial points such that $\min_{1\leq \tau'\leq T_* , \|\y_{\tau}'\|\leq U}f(\x + \y_{\tau'}) - f(\x) -\y_{\tau'}^{\top} \nabla f(\x) > -2\mathcal F$ when $\u_0 \in \mathcal X_s$; otherwise $\min_{1\leq \tau'\leq T_*, \|\y_{\tau}'\|\leq U}f(\x + \y_{\tau'}) - f(\x) -\y_{\tau'}^{\top} \nabla f(\x) \le -2\mathcal F$ when $\u_0 \in  \mathbb B_0(r) - \mathcal X_s$.

From our analysis, for any $\u_0\in \mathcal X_s$, we have $(\u_0 \pm \mu r \e) \not \in \mathcal X_s $ where $\mu \in [\frac{\delta}{2\sqrt{d}}, 1]$.  Denote by $I_{\mathcal X_s}(\cdot)$ the indicator function of being inside set $\mathcal X_s$. We set $u^{(1)}$ as the component along $\e$ direction and $\u^{(-1)}$ as the remaining $d-1$ dimensional vector, then the vector $\u = (u^{(1)}, \u^{(-1)})$. The volume of $\mathcal X_s$'s can be upper bounded as:
\begin{align*}
& \text{Vol}(\mathcal X_s) = \int_{\mathbb B_0^{(d)}(r)}  \mathrm{d}\u \cdot I_{\mathcal X_s}(\u) 
=  \int_{\mathbb B_0^{(d-1)}(r)}  \mathrm{d}\u^{(-1)} \int_{-\sqrt{r^2 - \|\u^{(-1)}\|^2}}^{\sqrt{r^2 - \|\u^{(-1)}\|^2}} \mathrm{d} u^{(1)} \cdot I_{\mathcal X_s}(\u)\\
\leq & \int_{\mathbb B_0^{(d-1)}(r)}  \mathrm{d}\u^{(-1)} \cdot 2\mu r 
 \leq \int_{\mathbb B_0^{(d-1)}(r)}  \mathrm{d}\u^{(-1)}  \cdot 2 \frac{\delta}{2\sqrt{d}}r =   \text{Vol}(\mathbb B_0^{(d-1)}(r)) \frac{\delta r}{\sqrt{d}} 
\end{align*}
Then, 
\begin{align*}
&\frac{\text{Vol}(\mathcal X_s)}{\text{Vol}(\mathbb B_0^{(d)}(r)) }
\le \frac{ \text{Vol}(\mathbb B_0^{(d-1)}(r)) \frac{\delta r}{\sqrt{d}}}{\text{Vol}(\mathbb B_0^{(d)}(r)) }
= \frac{\delta}{\sqrt{\pi d}}\frac{\Gamma(\frac{d}{2}+1)}{\Gamma(\frac{d}{2}+\frac{1}{2})}
\le  \frac{\delta}{\sqrt{\pi d}} \cdot \sqrt{\frac{d}{2}+\frac{1}{2}} \le \delta,
\end{align*}
where the second inequality is due to $\frac{\Gamma(x+1)}{\Gamma(x+1/2)}<\sqrt{x+\frac{1}{2}}$ for all $x\ge 0$.
Thus, we have $\u_0 \not \in \mathcal X_s$ with at least probability $1-\delta$. Therefore, with probability at least $1 - \delta$
%
\begin{align}\label{ineq:keylemma:1:AGD}
\min_{1\leq \tau'\leq T_* \atop \|\y_{\tau'}\|\leq U, \y_\tau\sim\u_0}f(\x + \y_{\tau'}) - f(\x) -\y_{\tau'}^{\top} \nabla f(\x) \leq -2\mathcal F.
\end{align}
Let us complete the proof.  By the Lipschitz continuity of Hessian, we have 
\begin{align*}
\left| f(\x+\y_{\tau}) - f(\x) - \nabla f(\x)^\top\y_\tau - \frac{1}{2}\y_{\tau}^\top\nabla^2 f(\x)\y_{\tau}\right| \leq \frac{L_2}{6}\|\y_{\tau}\|^3.
\end{align*}
Then by inequality (\ref{ineq:keylemma:1:AGD}), we get
\begin{align*}
  \frac{1}{2}\y_\tau^\top\nabla^2 f(\x)\y_\tau 
  \leq  f(\x+\y_\tau) - f(\x) - \nabla f(\x)^\top\y_\tau + \frac{L_2}{6}\|\y_\tau\|^3 
   \leq  -2\mathcal F + \mathcal F  \leq  -\mathcal F.
\end{align*}
Therefore, 
\begin{align}\label{neon2_upp_3}
\frac{\y_\tau^\top\nabla^2 f(\x)\y_\tau}{\|\y_\tau\|^2}\leq -\frac{2\mathcal F}{(12\hat c\mathcal P)^2}
    = - \frac{\gamma}{72\hat c^2 \log(dL_1 /(\gamma\delta))},
\end{align}
If \neonp returns $0$ it is not difficult to prove that $\lambda_{\min}(\nabla^2 f(\x))\geq - \gamma$ holds with high probability $1-O(\delta)$ for a sufficiently small $\delta$ by Bayes theorem.
\end{proof}

\subsection{Proof of Lemma~\ref{lemma:key1:AGD}}
\begin{proof}
 By using the smoothness of $\hat f(\u)$, we have
\begin{align*}
\hat f(\y_{\tau+1}) 
\leq & \hat  f(\u_\tau) + \nabla \hat f(\u_\tau)^\top (\y_{\tau+1} - \u_\tau) + \frac{L_1}{2}\|\y_{\tau+1} - \u_\tau\|^2 \\
= & \hat f(\u_\tau) - \eta \|\nabla \hat f(\u_\tau)\|^2 + \frac{1}{2}\eta^2 L_1\|\nabla \hat f(\u_\tau)\|^2,
\end{align*}
where the last equality uses the update (\ref{eqn:neonpn}). 
Since the NCFind never happens during $t$ iterations, then the inequality (\ref{cond:largeNC}) doesn't hold for any $\tau$,
thus we have
\begin{align*}
\hat f(\y_{\tau+1})  + \nabla \hat f(\u_\tau)^\top (\y_{\tau} - \u_\tau) - \frac{\gamma}{2}\|\y_{\tau} - \u_\tau\|^2 
\leq \hat f(\y_{\tau})  - \eta \|\nabla \hat f(\u_\tau)\|^2 + \frac{1}{2}\eta^2 L_1\|\nabla \hat f(\u_\tau)\|^2.
\end{align*}
By the update (\ref{eqn:neonpn}), we have
\begin{align*}
\| \y_{\tau+1} - \y_\tau\|^2 =&  \| \u_{\tau} - \y_\tau - \eta \nabla \hat f(\u_\tau)\|^2 \\
= & \| \u_{\tau} - \y_\tau \|^2 -2 \eta \nabla \hat f(\u_\tau)^\top (\u_{\tau} - \y_\tau) + \eta^2 \|\nabla \hat f(\u_\tau)\|^2
\end{align*}
and $\|\u_\tau - \y_\tau\| = \zeta \|\y_\tau - \y_{\tau-1}\|$. Thus,
\begin{align}\label{eqn:lm8key}
 \hat f(\y_{\tau+1}) + \frac{1}{2\eta} \| \y_{\tau+1} - \y_\tau\|^2  \leq &\hat f(\y_{\tau})  - \frac{\eta(1-\eta L_1)}{2} \|\nabla \hat f(\u_\tau)\|^2 +\frac{1+2\eta\gamma}{2\eta} \| \u_{\tau} - \y_\tau \|^2\notag\\
\leq &\hat f(\y_{\tau}) +\frac{1+2\eta\gamma}{2\eta} \| \u_{\tau} - \y_\tau \|^2\notag\\
= &\hat f(\y_{\tau})  +\frac{(1+2\eta\gamma)\zeta^2}{2\eta} \| \y_\tau - \y_{\tau-1}\|^2\notag \\
= &\hat f(\y_{\tau})  + \frac{1}{2\eta} \| \y_{\tau} - \y_{\tau-1}\|^2 -\frac{1-(1+2\eta\gamma)\zeta^2}{2\eta} \| \y_\tau - \y_{\tau-1}\|^2\notag\\
\leq &\hat f(\y_{\tau})  + \frac{1}{2\eta} \| \y_{\tau} - \y_{\tau-1}\|^2 -\frac{\sqrt{\eta\gamma}}{2\eta} \| \y_\tau - \y_{\tau-1}\|^2.
\end{align}
where the second inequality uses $\eta L_1 <1$; and the last inequality uses $1- (1+\eta \gamma)\zeta^2 \geq 1 - \zeta$ by choosing $\zeta = 1 - \sqrt{\eta\gamma}$ with $\sqrt{\eta\gamma} \leq 1/2$.

By summing up $\tau$ from $0$ to $t-1$ where $t<T$, we have
\begin{align*}
\hat f(\y_{t})+\frac{1}{2\eta}\|\y_{t} - \y_{t-1}\|^2
 \leq  \hat f(\y_{0}) - \frac{\sqrt{\eta\gamma}}{2\eta}\sum_{\tau=0}^{t-1}\|\y_{\tau} - \y_{\tau-1}\|^2,
\end{align*}
which inplies
\begin{align*}
\frac{\sqrt{\eta\gamma}}{2\eta}\sum_{\tau=0}^{t-1}\|\y_{\tau} - \y_{\tau-1}\|^2  \leq & \hat f(\y_{0}) - \hat f(\y_{t}).
\end{align*}
Hence, if $t<T$ then $\hat f(\y_t) - \hat f(\y_0)\geq -3\mathcal F$, i.e., $\hat f(\y_0) - \hat f(\y_t) \leq 3\mathcal F$. Then we have
\begin{align*}
&\sum_{\tau=0}^{t-1}\|\y_{\tau} - \y_{\tau-1}\| \leq \sqrt{t\sum_{\tau=0}^{t-1}\|\y_{\tau} - \y_{\tau-1}\|^2}
\leq  \sqrt{\frac{3\hat c \log^{1/2}(\frac{d\kappa}{\delta})}{\sqrt{\eta\gamma}}  \frac{\eta L_1\gamma^3}{L_2^2 \log^3(\frac{d\kappa}{\delta})}\frac{2\eta }{\sqrt{\eta\gamma}}}\\
\leq & \sqrt{\frac{3\hat c}{\sqrt{\eta\gamma}}\frac{ \eta L_1 \gamma^3}{L_2^2\log^2(\frac{d\kappa}{\delta})}\frac{2\eta }{\sqrt{\eta\gamma}}} 
= \sqrt{6\hat c}\sqrt{\eta L_1} \frac{\gamma}{L_2}\log^{-1}(d\kappa/\delta) = \sqrt{6\hat c}  \mathcal P.
\end{align*}
Since $\sum_{\tau=1}^{t-1}\|\u_{\tau} - \u_{\tau-1}\|\leq \sum_{\tau=0}^{t-1}\|\y_{\tau+1} - \y_{\tau}\|$, then $\|\u_\tau\|\leq \sum_{t=1}^{\tau}\|\u_{t} -\u_{t-1}\| + \|\u_0\|\leq \sqrt{6\hat c} \mathcal P + \mathcal P \leq \hat c \mathcal P$, for all $\tau\leq t-1$, where the last inequality is due to $\hat c \geq 43$. Additionally, by the unified update in~(\ref{eqn:neonpn}), we have
\begin{align*}
\|\u_{\tau+1}\|\leq& (1+\eta L_1)\|\u_{\tau}\| + \zeta(1+\eta L_1))\|\u_{\tau}\|  +  \zeta(1+\eta L_1))\|\u_{\tau-1}\|\leq 6\hat c \mathcal P.
\end{align*}
\end{proof}

\subsection{Proof of Lemma~\ref{lemma:key2:AGD}}
\begin{proof}
The techniques for proving this lemma are largely borrowed from~\citep{AGNON}. In particular,  the proof is almost the same as the proof of Lemma 18 in~\citep{AGNON}. For completeness, we include it in this subsection. First, the update of NAG can be written as 
\begin{align*}
&\u_{\tau+1} + \v_{\tau+1} = \w_{\tau+1} \\
=&(1+\zeta)[\w_\tau - \eta( \nabla f(\x+\w_\tau) - \nabla f(\x))] - \zeta[\w_{\tau-1} - \eta( \nabla f(\x+\w_{\tau-1}) - \nabla f(\x))] \\
=&(1+\zeta)[ \u_\tau + \v_\tau - \eta( \nabla f(\x+ \u_\tau + \v_\tau) - \nabla f(\x))] \\ &- \zeta[ \u_{\tau-1} + \v_{\tau-1} - \eta( \nabla f(\x+\u_{\tau-1} + \v_{\tau-1}) - \nabla f(\x))]\\
=&(1+\zeta)[ \u_\tau  - \eta( \nabla f(\x+ \u_\tau ) - \nabla f(\x))] - \zeta[ \u_{\tau-1} - \eta( \nabla f(\x+\u_{\tau-1} ) - \nabla f(\x))] \\
&+(1+\zeta)[  \v_\tau  - \eta H\v_\tau - \eta\Delta_\tau \v_\tau ] - \zeta[  \v_{\tau-1} - \eta H\v_{\tau-1} - \eta\Delta_{\tau-1} \v_{\tau-1}],
\end{align*}
where $\Delta_\tau = \int_{0}^1 \nabla^2 f(\x+\u_\tau+\theta \v_\tau)) \mathrm{d}\theta - H$.
Then
\begin{align*}
\v_{\tau+1} =& (1+\zeta)[  \v_\tau  - \eta H\v_\tau - \eta\Delta_\tau \v_\tau ] - \zeta[  \v_{\tau-1} - \eta H\v_{\tau-1} - \eta\Delta_{\tau-1} \v_{\tau-1}].
\end{align*}
and
\begin{align*}
\left[\begin{array}{cc}\v_{\tau+1}\\ \v_\tau\end{array}\right] 
=  \underbrace{\left[\begin{array}{cc}(1 + \zeta)(I - \eta H)& -\zeta(I - \eta H)\\ I & 0 \end{array}\right]}\limits_{A}\left[\begin{array}{cc}\v_\tau\\ \v_{\tau-1}\end{array}\right] -\eta \left[\begin{array}{c}(1 + \zeta)\Delta_\tau \v_\tau - \zeta\Delta_{\tau-1}\v_{\tau-1} \\ 0 \end{array}\right],
\end{align*}
where $\Delta_\tau = \int_{0}^1 \nabla^2 f(\x+\u_\tau+\theta \v_\tau)) \mathrm{d}\theta - H$. It is easy to show that $\|\Delta_\tau\|\leq L_2 (\|\u_\tau\| + \|\v_\tau\|) \leq 18 L_2  \mathcal P \hat c $. Let $\delta_{\tau} = (1 + \zeta)\Delta_\tau \v_\tau - \zeta\Delta_{\tau-1}\v_{\tau-1} $, then
\begin{align}\label{update:NAG:1}
 \left[\begin{array}{cc}\v_{\tau+1}\\ \v_\tau\end{array}\right] = A \left[\begin{array}{cc}\v_\tau\\ \v_{\tau-1}\end{array}\right] -\eta \left[\begin{array}{c}\delta_\tau \\ 0 \end{array}\right] 
= A^{\tau+1} \left[\begin{array}{cc}\v_0\\ \v_{-1}\end{array}\right] -\eta \sum_{i=0}^{\tau} A^{\tau-i} \left[\begin{array}{c}\delta_{i} \\ 0 \end{array}\right].
\end{align}
We can write $\v_\tau$ by (\ref{update:NAG:1}) as follows :
\begin{align*}
\v_\tau = \left[ \begin{array}{cc} I & 0 \end{array} \right]  A^{\tau} \left[\begin{array}{cc}\v_0\\ \v_{0}\end{array}\right] -\eta \left[ \begin{array}{cc} I & 0 \end{array} \right]  \sum_{i=0}^{\tau-1} A^{\tau-1-i} \left[\begin{array}{c}\delta_{i} \\ 0 \end{array}\right],
\end{align*}
where uses the fact that $\v_{-1} = \v_0$. Next we will show the following inequality by induction:
\begin{align}\label{ineq:key1}
 \frac{1}{2} \left\| \left[ \begin{array}{cc} I & 0 \end{array} \right]  A^{\tau} \left[\begin{array}{cc}\v_0\\ \v_{0}\end{array}\right] \right\| 
\geq \left \| \eta \left[ \begin{array}{cc} I & 0 \end{array} \right]  \sum_{i=0}^{\tau-1} A^{\tau-1-i} \left[\begin{array}{c}\delta_{i} \\ 0 \end{array}\right] \right\|
\end{align}
It is easy to show that the above inequality  (\ref{ineq:key1}) holds for $\tau = 0$. We assume the inequality (\ref{ineq:key1}) holds for all $\tau$. Then,
\begin{align*}
\|\v_\tau\| \leq & \left\| \left[ \begin{array}{cc} I & 0 \end{array} \right]  A^{\tau} \left[\begin{array}{cc}\v_0\\ \v_{0}\end{array}\right] \right\| + \left \| \eta \left[ \begin{array}{cc} I & 0 \end{array} \right]  \sum_{i=0}^{\tau-1} A^{\tau-1-i} \left[\begin{array}{c}\delta_i \\ 0 \end{array}\right] \right\| \\
\leq & \frac{3}{2}\left\| \left[ \begin{array}{cc} I & 0 \end{array} \right]  A^{\tau} \left[\begin{array}{cc}\v_0\\ \v_{0}\end{array}\right] \right\|.
\end{align*}
On the other hand, we have
$\|\delta_{\tau}\| \leq (1 + \zeta)\|\Delta_\tau\| \|\v_\tau\| + \zeta\|\Delta_{\tau-1}\|\|\v_{\tau-1}\| \leq 54 L_2 \mathcal P \hat c (\|\v_\tau\| + \|\v_{\tau-1}\|)$, 
then we get
\begin{align*}
\|\delta_{\tau}\| \leq& 54 L_2 \mathcal P \hat c (\|\v_\tau\| + \|\v_{\tau-1}\|) \\
\leq & 81 L_2 \mathcal P \hat c \left(\left\| \left[ \begin{array}{cc} I & 0 \end{array} \right]  A^{\tau} \left[\begin{array}{cc}\v_0\\ \v_{0}\end{array}\right] \right\| +\left\| \left[ \begin{array}{cc} I & 0 \end{array} \right]  A^{\tau-1} \left[\begin{array}{cc}\v_0\\ \v_{0}\end{array}\right] \right\|\right) \\
\leq & 162 L_2 \mathcal P \hat c \left\| \left[ \begin{array}{cc} I & 0 \end{array} \right]  A^{\tau} \left[\begin{array}{cc}\v_0\\ \v_{0}\end{array}\right] \right\|,
\end{align*}
where the last inequality uses the monotonic property in terms of $\tau$ in Lemma~33 in \citep{AGNON} . 
We then need to prove the inequality (\ref{ineq:key1}) holds for $\tau + 1$.
We consider the following term for $\tau + 1$:
\begin{align}\label{ineq:key2}
\nonumber & \left \| \eta \left[ \begin{array}{cc} I & 0 \end{array} \right]  \sum_{i=0}^{\tau} A^{\tau-i} \left[\begin{array}{c}\delta_{i} \\ 0 \end{array}\right] \right\| 
\leq   \eta \sum_{i=0}^{\tau}  \left \| \left[ \begin{array}{cc} I & 0 \end{array} \right]  A^{\tau-i}  \left[\begin{array}{c}I\\ 0 \end{array}\right]  \right\| \left \| \delta_{i} \right\|   \\
\leq &    162 \eta L_2 \mathcal P \hat c  \sum_{i=0}^{\tau} \left \{ \left \| \left[ \begin{array}{cc} I & 0 \end{array} \right]  A^{\tau-i}  \left[\begin{array}{c}I\\ 0 \end{array}\right]  \right\| \left\| \left[ \begin{array}{cc} I & 0 \end{array} \right]  A^{i} \left[\begin{array}{cc}\v_0\\ \v_{0}\end{array}\right] \right\| \right \}.
\end{align} 
We know from the preconditions that $\lambda_{\min}(H)\leq - \gamma$ and the coordinate $\e_1$ is along the minimum eigenvector direction of Hessian matrix $H$, then we let the corresponding $2\times 2$ matrix as $A_1$ and
\begin{align*}
\left[ \begin{array}{cc} a_\tau^{(1)} & -b_\tau^{(1)} \end{array} \right] = \left[ \begin{array}{cc} 1 & 0 \end{array} \right] A_1^\tau.
\end{align*}
Since $\v_0 = \w_0 -\u_0 = \mu r \e_1$, $\v_0$ is along the $\e_1$ direction. By the analysis of Lemma 32 in \citep{AGNON}, the matrix $\left[ \begin{array}{cc} I & 0 \end{array} \right]  A^{\tau-i}  \left[\begin{array}{c}I\\ 0 \end{array}\right] $ is diagonal, and thus the spectral norm is obtained along $\e_1$ which corresponding to $\lambda_{\min}(H)$.
Thus, inequality (\ref{ineq:key2}) can be written as
\begin{align*}
& \left \| \eta \left[ \begin{array}{cc} I & 0 \end{array} \right]  \sum_{i=0}^{\tau} A^{\tau-i} \left[\begin{array}{c}\delta_i \\ 0 \end{array}\right] \right\| 
\leq  \chi \sum_{i=0}^{\tau} \left \| \left[ \begin{array}{cc} I & 0 \end{array} \right]  A^{\tau-i}  \left[\begin{array}{c}I\\ 0 \end{array}\right]  \right\| \left\| \left[ \begin{array}{cc} I & 0 \end{array} \right]  A^{i} \left[\begin{array}{cc}\v_0\\ \v_{0}\end{array}\right] \right\| \\
\leq &  \chi  \sum_{i=0}^{\tau} a_{\tau-i}^{(1)} (a_{i}^{(1)} -b_{i}^{(1)} ) \|\v_0\| 
\leq   \chi  \sum_{i=0}^{\tau} \left(\frac{2}{1-\zeta}+(\tau+1)\right) (a_{\tau+1}^{(1)} -b_{\tau+1}^{(1)} ) \|\v_0\| \\
\leq &  \chi (\tau+1)\left(\frac{2}{1-\zeta}+(\tau+1)\right) \left\| \left[ \begin{array}{cc} I & 0 \end{array} \right]  A^{\tau+1} \left[\begin{array}{cc}\v_0\\ \v_{0}\end{array}\right] \right\|\\
\leq & \chi (\hat c\mathcal J) \left(\frac{2}{1-\zeta}+\hat c\mathcal J\right) \left\| \left[ \begin{array}{cc} I & 0 \end{array} \right]  A^{\tau+1} \left[\begin{array}{cc}\v_0\\ \v_{0}\end{array}\right] \right\|,
\end{align*} 
where $\chi = 162 \eta L_2 \mathcal P \hat c$.
By choosing $2/(1-\zeta)\leq \hat c\mathcal J$, i.e. $\zeta \leq 1 - \frac{2\sqrt{\eta\gamma}}{\hat c \log(\kappa d/\delta)}$, then $162 \eta L_2 \mathcal P \hat c (\hat c\mathcal J) (2/(1-\zeta)+\hat c\mathcal J) \leq 312 \eta L_2 \mathcal P \hat c (\hat c\mathcal J)^2 = 312 \hat c^3 \sqrt{\eta L_1} = \frac{1}{2}$ if we select $\eta = \frac{c_{\max}}{L_1}$ with $c_{\max} = \frac{1}{624^2 \hat c^6}$. Therefore, we have shown that the inequality (\ref{ineq:key1}) holds.
Further, we have
\begin{align*}
\|\v_\tau\| \geq & \left\| \left[ \begin{array}{cc} I & 0 \end{array} \right]  A^{\tau} \left[\begin{array}{cc}\v_0\\ \v_{0}\end{array}\right] \right\| - \left \| \eta \left[ \begin{array}{cc} I & 0 \end{array} \right]  \sum_{i=0}^{\tau-1} A^{\tau-1-i} \left[\begin{array}{c}\delta_{i} \\ 0 \end{array}\right] \right\| \\
\geq & \frac{1}{2}\left\| \left[ \begin{array}{cc} I & 0 \end{array} \right]  A^{\tau} \left[\begin{array}{cc}\v_0\\ \v_{0}\end{array}\right] \right\|
\end{align*}
Noting that $\lambda_{\min}(H)\leq - \gamma$, by Lemmas~23, 30, 33 in \citep{AGNON}, we have
\begin{align*}
&\frac{1}{2}\left\| \left[ \begin{array}{cc} I & 0 \end{array} \right]  A^{\tau} \left[\begin{array}{cc}\v_0\\ \v_{0}\end{array}\right] \right\| \geq \left\{ \begin{array}{rcl}
       \frac{\sqrt{|x|}}{4} (1 + \frac{\sqrt{|x|}}{2})^\tau \|\v_0\|, && x\in\left[-\frac{1}{4}, -(1-\zeta)^2\right],\\ \\
        \frac{1-\zeta}{4}(1+\frac{|x|}{2(1-\zeta)})^\tau \|\v_0\|, &&  x\in\left[-(1-\zeta)^2, 0\right],
                \end{array}\right.
\end{align*}
where $|x| = |\eta \lambda_{\min}(H) | \geq  \eta \gamma$. By choosing $1-\zeta = \sqrt{\eta\gamma}$, we have
\begin{align*}
\frac{1}{2}\left\| \left[ \begin{array}{cc} I & 0 \end{array} \right]  A_n^{\tau} \left[\begin{array}{cc}\v_0\\ \v_{0}\end{array}\right] \right\| \geq 
       \frac{\sqrt{\eta \gamma}}{4} \left(1 + \frac{\sqrt{\eta \gamma}}{2}\right)^\tau \mu r
\end{align*}
Combining $\|\v_\tau\| \leq 12\mathcal P \hat c$, for all $\tau < T$, we have
\begin{align*}
12\mathcal P \hat c \geq & \frac{\sqrt{\eta \gamma}}{4}  \left(1 + \frac{\sqrt{\eta \gamma}}{2}\right)^\tau \mu r  
= \frac{\sqrt{\eta \gamma}}{4}  \left(1 + \frac{\sqrt{\eta \gamma}}{2}\right)^\tau\mu\frac{\mathcal P}{\kappa}\log^{-1}(d\kappa/\delta) \\
\geq & \frac{\sqrt{\eta \gamma}}{4}  \left(1 + \frac{\sqrt{\eta \gamma}}{2}\right)^\tau \frac{\delta}{2\sqrt{d}} \frac{\mathcal P}{\kappa}\log^{-1}(d\kappa/\delta) \\
= & \frac{\sqrt{c_{\max}\gamma/L_1}}{4}  \left(1 + \frac{\sqrt{\eta \gamma}}{2}\right)^\tau \frac{\delta}{2\sqrt{d}} \frac{\mathcal P}{\kappa}\log^{-1}(d\kappa/\delta) 
\end{align*}
Then
\begin{align*}
T < & \frac{\log (96 \frac{\kappa^{3/2}\sqrt{d}}{ \sqrt{c_{\max}}\delta}\cdot \hat c \log(d\kappa/\delta))}{\log (1+\sqrt{\gamma \eta}/2)}
\leq \frac{\log (96\frac{\kappa^{3/2}\sqrt{d}}{ \sqrt{c_{\max}}\delta}\cdot \hat c \log(d\kappa/\delta))}{2\sqrt{\gamma \eta}/3} \\
\leq&  1.5(2.5 + \log (96 \hat c/\sqrt{c_{\max}}))\mathcal J,
\end{align*}
where the last inequality holds because of $\delta\in (0, \frac{d\kappa}{e}]$ and $\log (d\kappa/\delta) \ge 1$.
By choosing $\hat c \geq 43$, we have $1.5(2.5 + \log (96 \hat c/\sqrt{c_{\max}})) \leq \hat c$, then
$T <  \hat c \mathcal J $ and complete the proof.
\end{proof}

\section{Additional simulation results}
\begin{figure}[H] 
\centering
 \subfigure[NEON on full data\label{fig:a1}]{\includegraphics[scale=0.37]{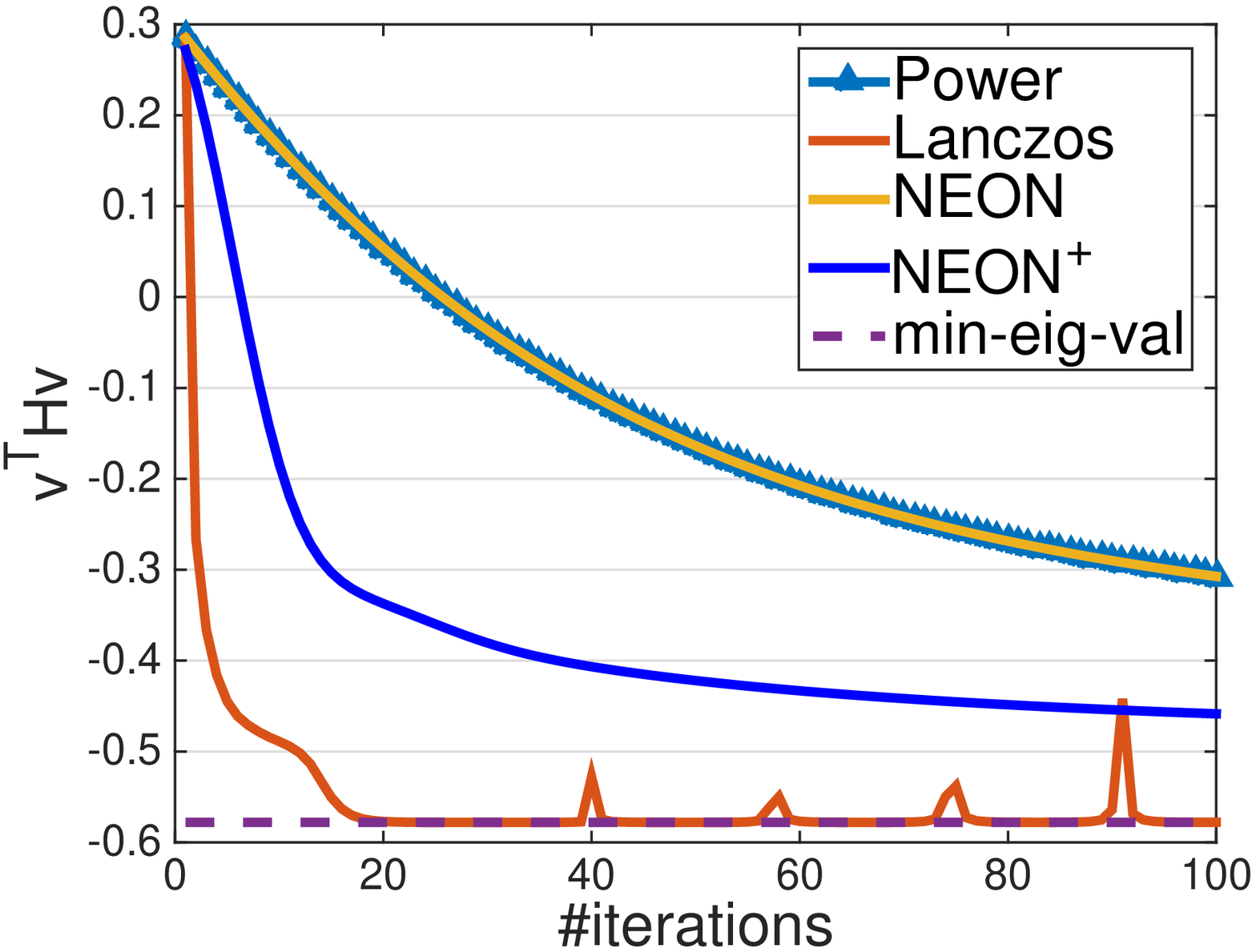}} 
 \subfigure[NEON on sub-sampled data\label{fig:b1}]{\includegraphics[scale=0.37]{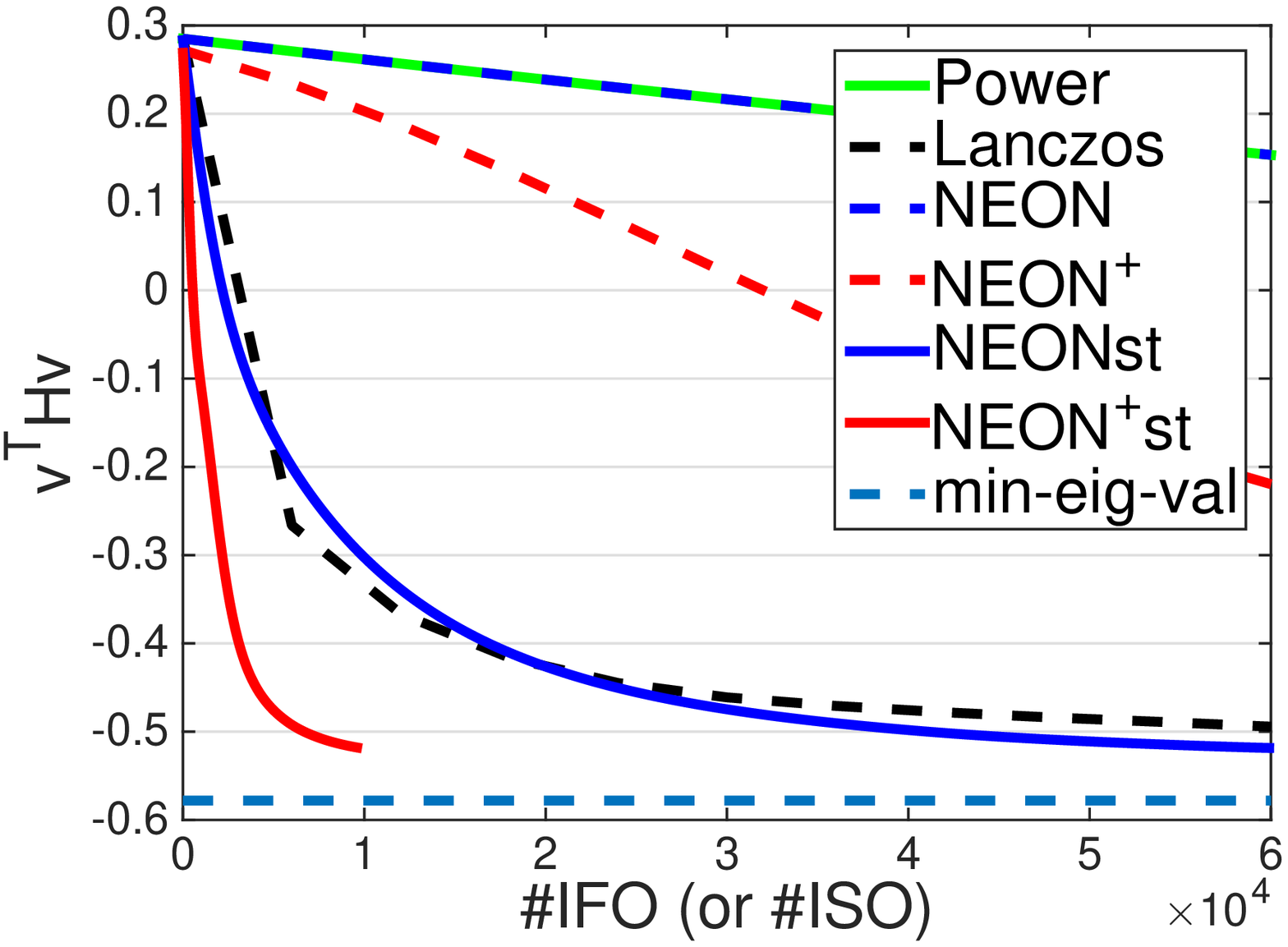}} 
\caption{Comparison between different NEON procedures and Second-order Methods}
\label{fig:neon:supp1}
\end{figure}
\begin{figure}[H] 
\centering
  \subfigure[NEON on full data\label{fig:a2}]{\includegraphics[scale=0.37]{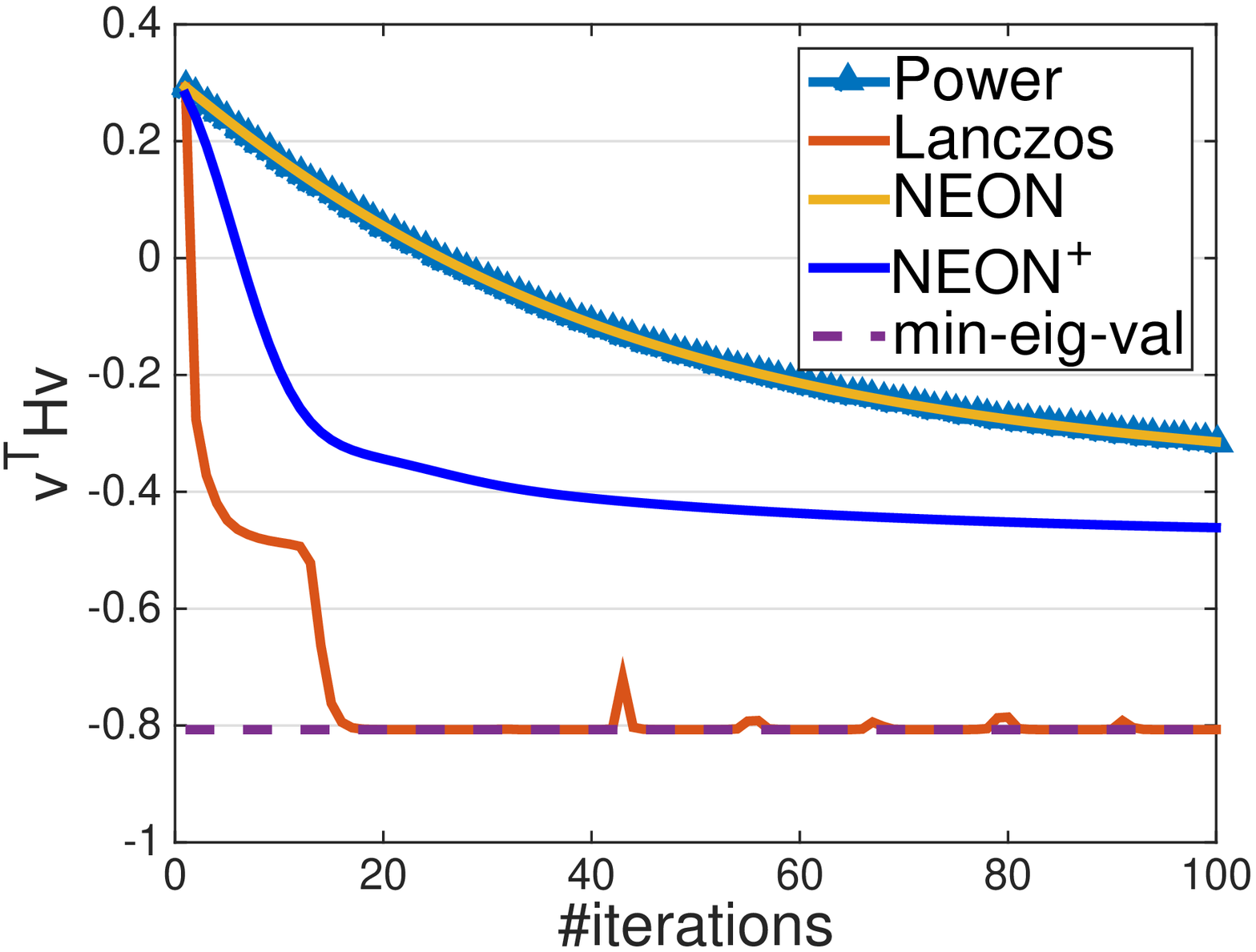}} 
    \subfigure[NEON on sub-sampled data\label{fig:b2}]{\includegraphics[scale=0.37]{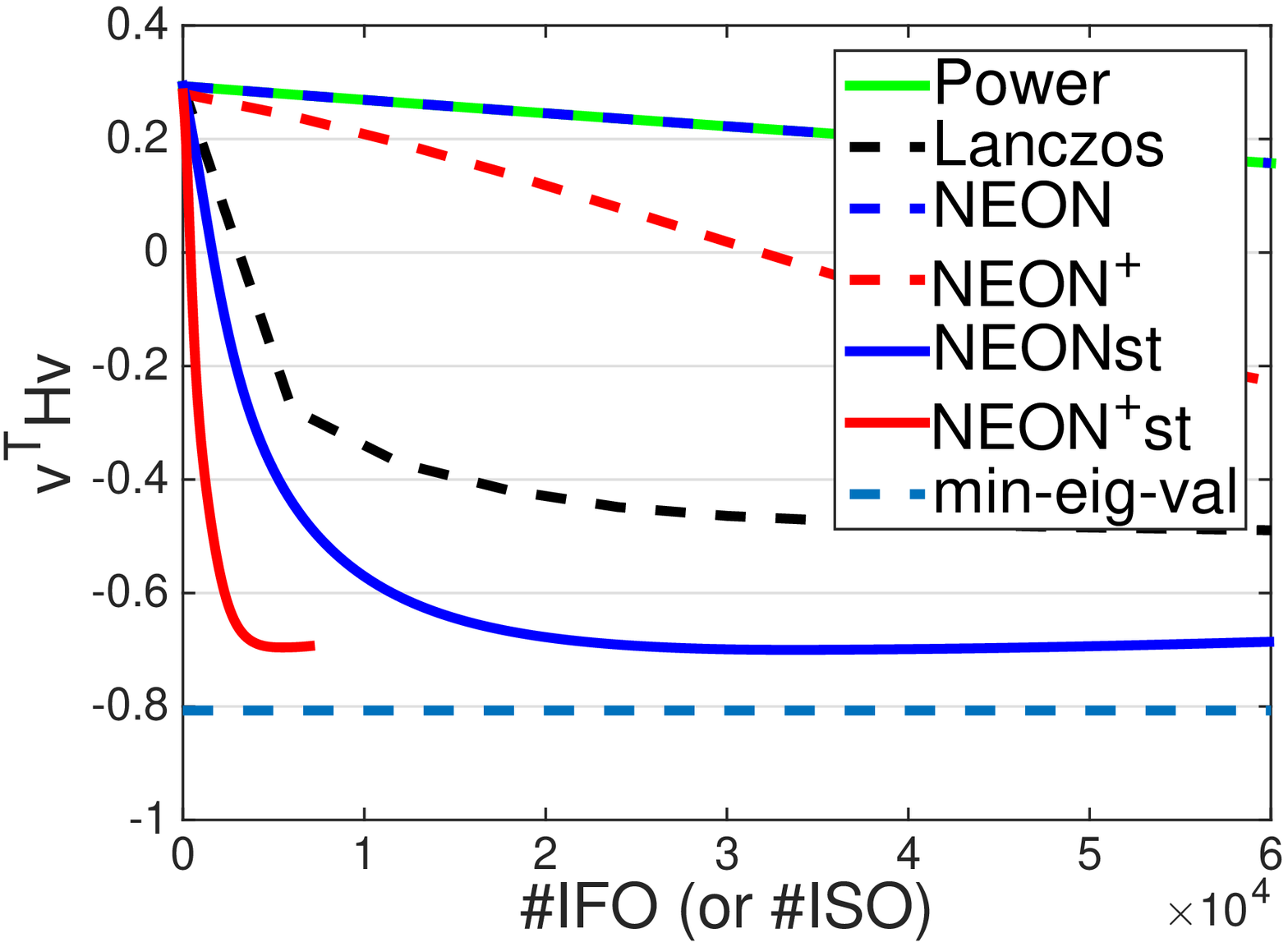}} 
\caption{Comparison between different NEON procedures and Second-order Methods}
\label{fig:neon:supp2}
\end{figure}
\begin{figure}[H] 
\centering
  \subfigure[NEON on full data\label{fig:a3}]{\includegraphics[scale=0.37]{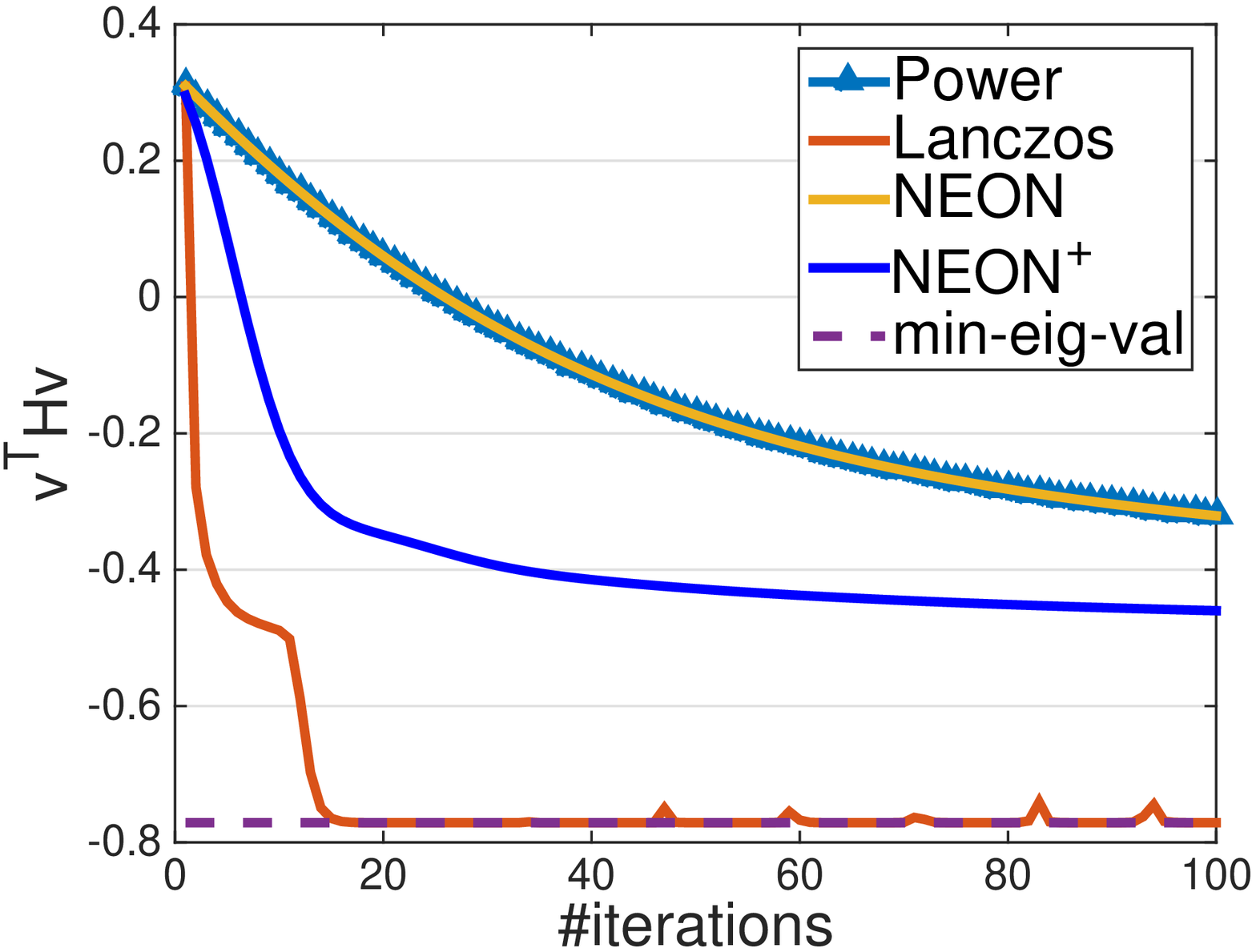}} 
    \subfigure[NEON on sub-sampled data\label{fig:b3}]{\includegraphics[scale=0.37]{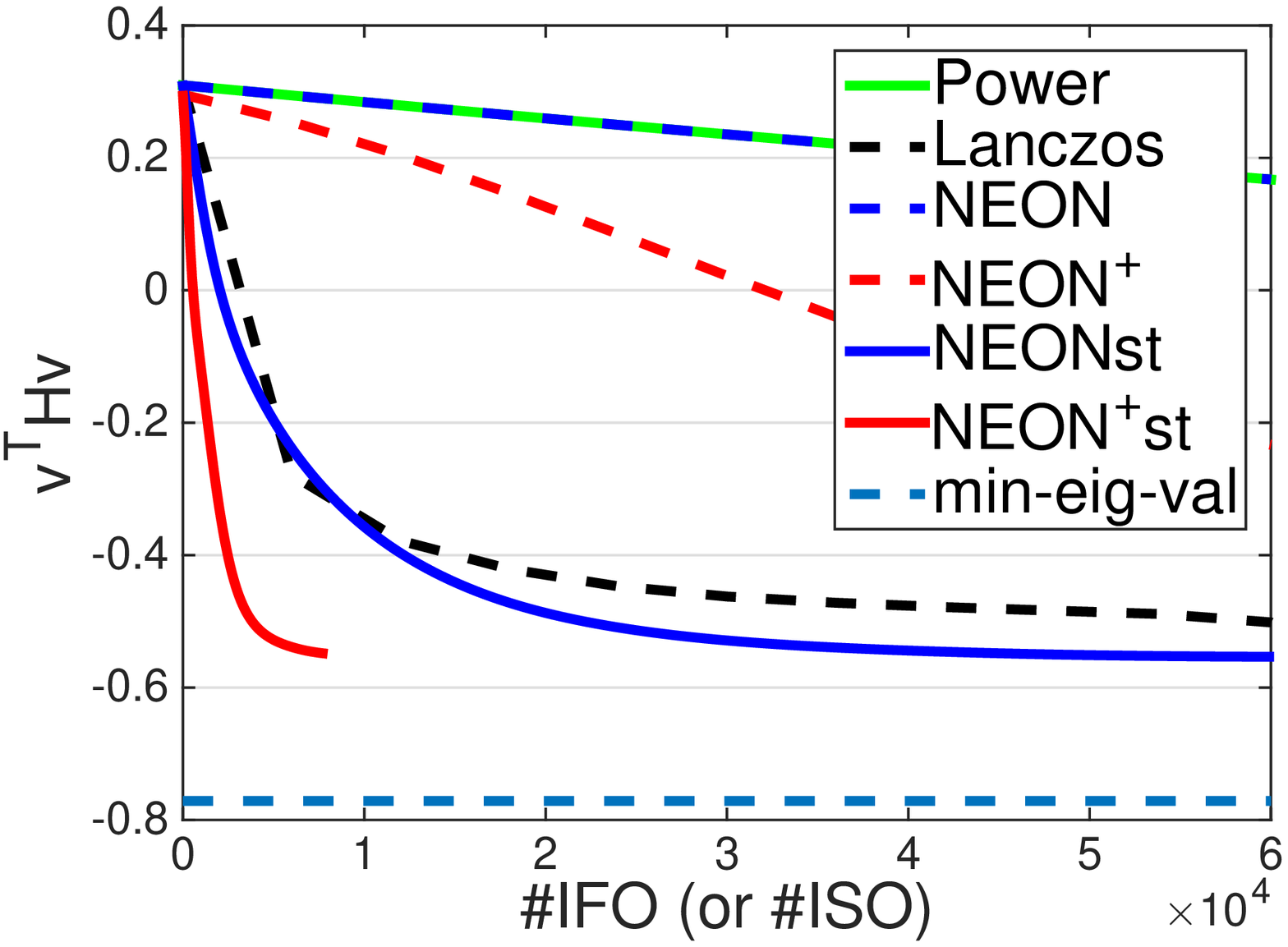}} 
\caption{Comparison between different NEON procedures and Second-order Methods}
\label{fig:neon:supp3}
\end{figure}
\begin{figure}[H] 
\centering
  \subfigure[NEON on full data\label{fig:a4}]{\includegraphics[scale=0.37]{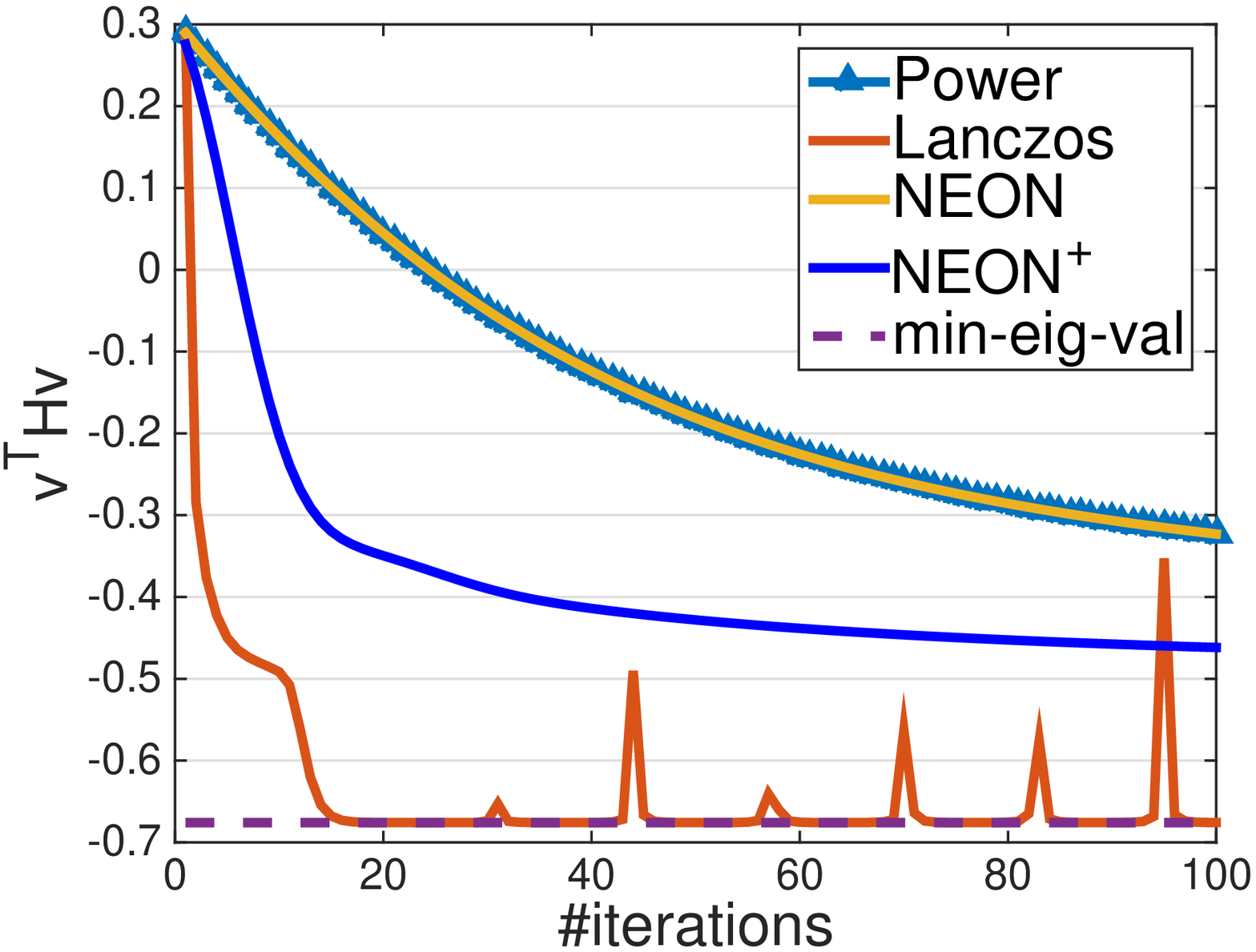}} 
    \subfigure[NEON on sub-sampled data\label{fig:b4}]{\includegraphics[scale=0.37]{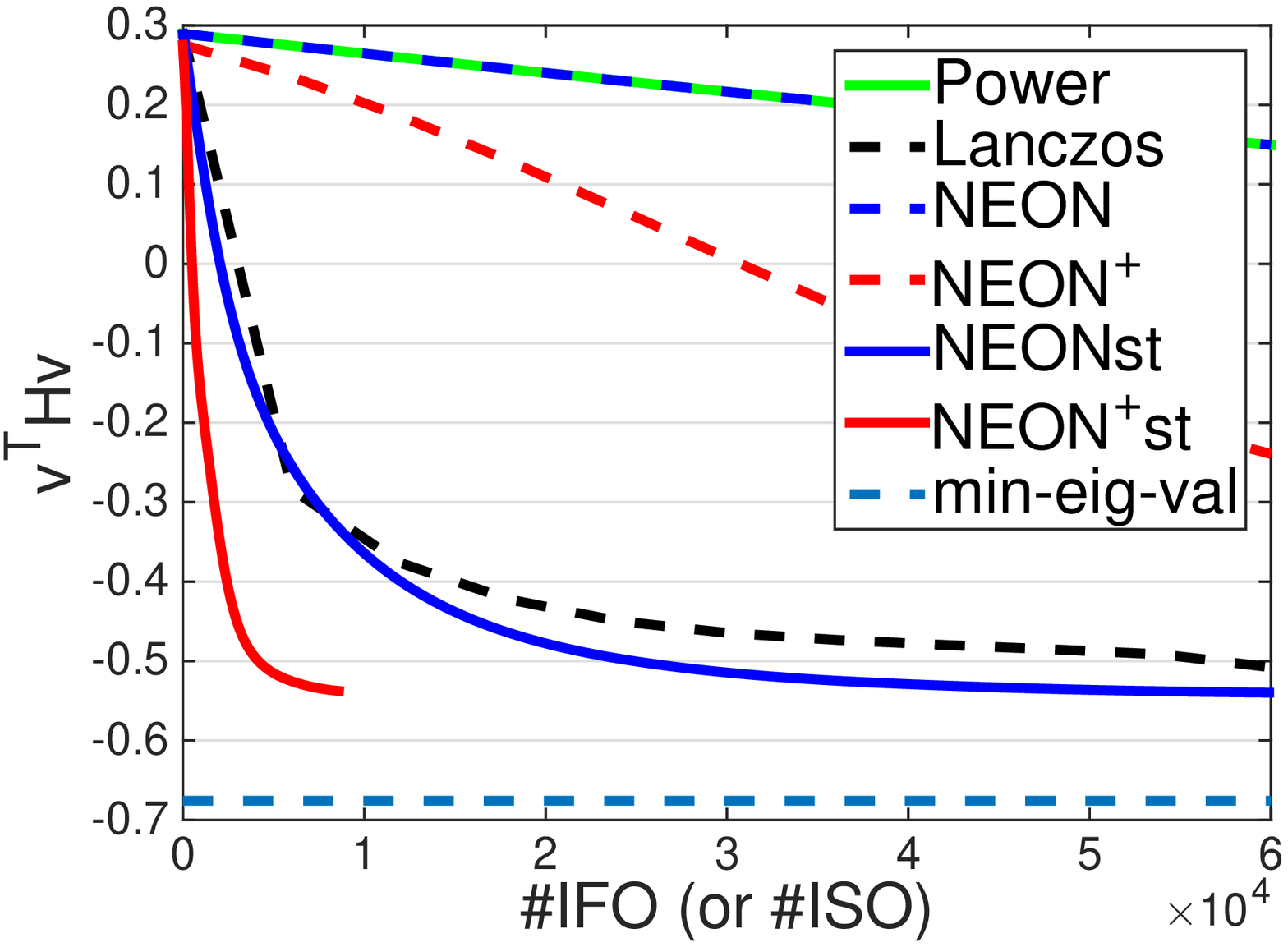}} 
\caption{Comparison between different NEON procedures and Second-order Methods}
\label{fig:neon:supp4}
\end{figure}

\section{Analysis of eigen-gap}
In subsection ``4.1 Finding NC by Accelerated Gradient Method", we mentioned that under some assumptions, the eigen-gap $\Delta_k$ of $A$ corresponding to its top$-k$ eigen-space is at least $\sqrt{\eta\gamma}/2$ by choosing $\zeta = 1-\sqrt{\eta\gamma}$. We state the result in the following lemma.
\begin{lemma}
Assume the eigen-values of $H$ satisfy $\lambda_1\leq\lambda_2\ldots\leq\lambda_k\leq -\gamma<0\leq \lambda_{k+1}\leq\lambda_d$ and $\eta\leq 1/L_1$ is sufficiently small.  Let $\e_1, \ldots, \e_d$ denote the corresponding eigen-values of $H$. Then the top-$k$ eigen-pairs of $A$ are $(\lambda^h_i(A), \widehat\e_i), i=1,\ldots, k$, where $\widehat\e_i=\left[\begin{array}{cc}\e_i\\ (1/\lambda^h_i(A))\e_i\end{array}\right]$ and
\begin{align*}
\lambda_i^{h}(A) =& \frac{1}{2} [(1+\zeta)(1-\eta\lambda_i) +\sqrt{(1+\zeta)^2(1-\eta\lambda_i)^2-4\zeta(1-\eta\lambda_i)} ].
\end{align*}
By choosing $\zeta = 1 -\sqrt{\eta\gamma}\in(0,1)$, the eigen-gap $\Delta_k$ of $A$ corresponding to its top-$k$  eigen-space is at least $\sqrt{\eta\gamma}/2$.
\end{lemma}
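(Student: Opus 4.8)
The plan is to diagonalize the augmented matrix $A$ block-by-block in the eigenbasis of $H$, reducing the $2d \times 2d$ problem to $d$ independent $2 \times 2$ eigenvalue problems, then analyze how the dominant root of each $2\times 2$ block depends on the underlying eigenvalue $\lambda_i$ of $H$. Concretely, since $A = \left[\begin{array}{cc}(1+\zeta)(I-\eta H) & -\zeta(I-\eta H)\\ I & 0\end{array}\right]$ and all four blocks are polynomials in $H$, writing $H = \sum_i \lambda_i \e_i \e_i^\top$ lets us restrict $A$ to each two-dimensional invariant subspace $\mathrm{span}\{(\e_i^\top,0)^\top, (0,\e_i^\top)^\top\}$, on which $A$ acts as the $2\times 2$ matrix $A_i = \left[\begin{array}{cc}(1+\zeta)(1-\eta\lambda_i) & -\zeta(1-\eta\lambda_i)\\ 1 & 0\end{array}\right]$. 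The characteristic equation of $A_i$ is $\mu^2 - (1+\zeta)(1-\eta\lambda_i)\mu + \zeta(1-\eta\lambda_i) = 0$, whose larger root (when real) is exactly the claimed $\lambda_i^h(A)$; one then checks that the eigenvector $(\e_i^\top, (1/\lambda_i^h(A))\e_i^\top)^\top$ has the stated form, verifying the first display.

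Next I would establish monotonicity: the map $\lambda \mapsto \lambda^h(A)$, viewed as the dominant root of $\mu^2 - (1+\zeta)(1-\eta\lambda)\mu + \zeta(1-\eta\lambda)$, is decreasing in $\lambda$ on the relevant range (more negative $\lambda_i$, i.e.\ stronger negative curvature, yields a larger dominant eigenvalue of $A$). This follows from implicit differentiation of the characteristic equation or from the fact that $1-\eta\lambda$ is decreasing and the dominant root is increasing in the parameter $t := 1-\eta\lambda$. Given monotonicity, the top-$k$ eigenspace of $A$ corresponds precisely to the $k$ most-negative eigenvalues $\lambda_1,\dots,\lambda_k$ of $H$, and the eigen-gap is $\Delta_k = \lambda_k^h(A) - \lambda_{k+1}^h(A)$. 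Since $\lambda_k \le -\gamma$ and $\lambda_{k+1}\ge 0$, it suffices to lower-bound $\lambda^h(A)$ at $\lambda = -\gamma$ and upper-bound it at $\lambda = 0$, then take the difference.

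For the bounds themselves: at $\lambda = 0$ we have $t = 1$, and the characteristic equation becomes $\mu^2 - (1+\zeta)\mu + \zeta = (\mu-1)(\mu-\zeta) = 0$, so $\lambda^h(A)\big|_{\lambda=0} = 1$ (using $\zeta\in(0,1)$). At $\lambda = -\gamma$ we have $t = 1+\eta\gamma$, and with the choice $\zeta = 1-\sqrt{\eta\gamma}$ I would plug in and estimate the larger root of $\mu^2 - (1+\zeta)t\,\mu + \zeta t = 0$; the discriminant is $(1+\zeta)^2 t^2 - 4\zeta t = t\big((1+\zeta)^2 t - 4\zeta\big)$, and with $t = 1+\eta\gamma$, $\zeta = 1-\sqrt{\eta\gamma}$ one verifies this is nonnegative (so the root is real) and that the dominant root exceeds $1 + \tfrac{1}{2}\sqrt{\eta\gamma}$ for $\eta\gamma$ small. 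Subtracting the value $1$ at $\lambda=0$ then gives $\Delta_k \ge \tfrac{1}{2}\sqrt{\eta\gamma}$, using that $\eta \le 1/L_1$ keeps $\eta\gamma$ below a fixed small constant so the first-order Taylor estimates are valid.

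**The main obstacle** I anticipate is the careful bookkeeping in the last step: showing that the dominant root of the perturbed $2\times 2$ characteristic polynomial at $t = 1+\eta\gamma$ is at least $1 + \tfrac12\sqrt{\eta\gamma}$, and simultaneously that the root stays \emph{real} (not complex) for this parameter regime — this is exactly where the specific choice $\zeta = 1-\sqrt{\eta\gamma}$ is engineered, so the algebra has to be done with enough precision to see the cancellation. A secondary subtlety is confirming that for $\lambda_{k+1}\in[0,\lambda_d]$ the dominant root never exceeds $1$ and never becomes complex in a way that would inflate its modulus above $1$; one should check that on $t\in(0,1]$ the two roots are real and bounded by $1$ (indeed the product is $\zeta t \le \zeta < 1$ and the sum is $(1+\zeta)t \le 1+\zeta$, which pins both roots in $[0,1]$), so the spectral contribution from non-negative eigenvalues of $H$ is uniformly at most $1$. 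Once these two estimates are in hand, the gap bound follows immediately.
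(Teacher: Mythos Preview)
Your approach is essentially the same as the paper's: block-diagonalize $A$ via the eigenbasis of $H$ into $2\times 2$ blocks $A_i$, solve the quadratic $\mu^2-(1+\zeta)(1-\eta\lambda_i)\mu+\zeta(1-\eta\lambda_i)=0$, verify the eigenvector form, and then separate the analysis into $\lambda_i\le -\gamma$ (where you lower-bound the dominant root by $1+\tfrac12\sqrt{\eta\gamma}$ after plugging in $\zeta=1-\sqrt{\eta\gamma}$) versus $\lambda_i\ge 0$ (where you upper-bound by $1$). The paper does the same computation, including the explicit algebra at $\lambda=-\gamma$ that you flagged as the ``main obstacle.''

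There is one genuine gap in your secondary subtlety, though: for $\lambda_i>0$ (i.e.\ $t=1-\eta\lambda_i\in(0,1)$) the roots are \emph{not} always real. The discriminant is $t\big[(1+\zeta)^2 t-4\zeta\big]$, which is negative whenever $t<4\zeta/(1+\zeta)^2$; with $\zeta$ close to $1$ this covers most of $(0,1)$. So your argument ``the product is $\zeta t$ and the sum is $(1+\zeta)t$, which pins both roots in $[0,1]$'' only handles the real-root case. You must also treat the complex-conjugate case, where the modulus is $\sqrt{\zeta t}<1$; the paper does exactly this split. Once that case is added, your outline matches the paper's proof.
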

\begin{proof}
Following the analysis in~~\citep{corrACGWright}, we know $A$ is similar to $\text{diag}(M_1, \dots, M_d)$, where
\begin{align}\label{mat:M:An}
M_i = \left[\begin{array}{cc}(1+\zeta)(1  - \eta \lambda_i) & -\zeta(1  - \eta \lambda_i)  \\ 1 & 0\end{array}\right] 
\end{align}
Then the eigenvalues of $M_i$ satisfy
\begin{align}\label{M:eigenval}
 h(\mu) := \text{det}(A-\mu I) =& \mu^2 - (1+\zeta)(1-\eta\lambda_i) \mu + \zeta (1-\eta\lambda_i) =0,
\end{align}
for which the roots are
\begin{align}\label{M:eigenval:roots}
 \mu_i^{h,l}(A) = \frac{1}{2} [(1+\zeta)(1-\eta\lambda_i) \pm \sqrt{(1+\zeta)^2(1-\eta\lambda_i)^2-4\zeta(1-\eta\lambda_i)} ],
\end{align}
and the discriminant is 
\begin{align*}
\Delta = [(1-\zeta)^2 - (1+\zeta)^2 \eta\lambda_i] (1-\eta\lambda_i).
\end{align*}
When $\lambda_i \leq 0$, we have $\Delta \geq 0$, which implies both roots in (\ref{M:eigenval:roots}) are real.
Furthermore, we have $h(0) = \zeta (1-\eta\lambda_i) > 0$, $h(1) = 1 - (1+\zeta)(1-\eta\lambda_i) + \zeta (1-\eta\lambda_i) = \eta\lambda_i < 0$,
and $h((1+\zeta)(1-\eta\lambda_i)) =  \zeta (1-\eta\lambda_i) > 0 $ with $ (1+\zeta)(1-\eta\lambda_i) > 1$, then we know one root is in $(0,1)$ and another is in $(1, \infty)$.

When $\lambda_i > 0$, we want to show the magnitudes of roots are less than $1$. If $\Delta < 0$, $\mu_i^{h,l}(A)$ are complex, and their magnitude is
\begin{align*}
&\frac{1}{2}\sqrt{(1+\zeta)^2(1-\eta\lambda_i)^2 + 4\zeta(1-\eta\lambda_i) - (1+\zeta)^2(1-\eta\lambda_i)^2}= \sqrt{\zeta(1-\eta\lambda_i)} < 1.
\end{align*}
If $\Delta \geq 0$, we want
\begin{align*}
-2 <  [(1+\zeta)(1-\eta\lambda_i)  \pm \sqrt{(1+\zeta)^2(1-\eta\lambda_i)^2-4\zeta(1-\eta\lambda_i)} ] < 2.
\end{align*}
For the right hand side, it is equivalent to 
\begin{align*}
& \sqrt{(1+\zeta)^2(1-\eta\lambda_i)^2-4\zeta(1-\eta\lambda_i)} < 2 - (1+\zeta)(1-\eta\lambda_i) \\
\Leftrightarrow&(1+\zeta)^2(1-\eta\lambda_i)^2-4\zeta(1-\eta\lambda_i) < [2 - (1+\zeta)(1-\eta\lambda_i)]^2 \\
\Leftrightarrow&-\zeta(1-\eta\lambda_i) < 1 - (1+\zeta)(1-\eta\lambda_i) \\
\Leftrightarrow& 0 < 1 - (1-\eta\lambda_i) \\
\Leftrightarrow& 0 < \eta\lambda_i,
\end{align*}
where the last inequality holds due to $\eta > 0$ and $\lambda_i > 0$. Next, for the left hand side, it is  equivalent to 
\begin{align*}
& -2 - (1+\zeta)(1-\eta\lambda_i) < - \sqrt{(1+\zeta)^2(1-\eta\lambda_i)^2-4\zeta(1-\eta\lambda_i)}\\
\Leftrightarrow&\sqrt{(1+\zeta)^2(1-\eta\lambda_i)^2-4\zeta(1-\eta\lambda_i)} < 2 + (1+\zeta)(1-\eta\lambda_i)\\ 
\Leftrightarrow&(1+\zeta)^2(1-\eta\lambda_i)^2-4\zeta(1-\eta\lambda_i) < [2 + (1+\zeta)(1-\eta\lambda_i)]^2 \\
\Leftrightarrow&-\zeta(1-\eta\lambda_i) < 1 + (1+\zeta)(1-\eta\lambda_i) \\
\Leftrightarrow& 0 < 1 + (1+2\zeta) (1-\eta\lambda_i) ,
\end{align*}
where the last inequality holds due to $\eta\lambda_i < 1$. This is clearly true because of the assumption of $\eta L_1 < 1$.

Since $A =  \left[\begin{array}{cc}(1 + \zeta)(I - \eta H)& -\zeta (I - \eta H)\\ I & 0 \end{array}\right]$, then we have
\begin{align*}
A \widehat\e_i=& A \left[\begin{array}{cc}\e_i\\ (1/\lambda^h_i(A))\e_i\end{array}\right] =  \left[\begin{array}{cc}\{ (1 + \zeta)(1 - \eta \lambda_i) - (1/\lambda^h_i(A))\zeta (1 - \eta \lambda_i)\} \e_i  \\ \e_i\end{array}\right]\\
= & \left[\begin{array}{cc}\lambda^h_i(A)\e_i  \\ \e_i\end{array}\right],
\end{align*}
where the last equality is ture due to (\ref{M:eigenval}). 

We then set $\zeta = 1 - \sqrt{\eta\gamma}$. For $\lambda_i < 0, i=1,\ldots, k$, we have $ \eta \lambda_i \leq  -\eta \gamma$. 
\begin{align*}
\lambda_i^{h}(A) = &\frac{1}{2} [(1+\zeta)(1-\eta\lambda_i) +\sqrt{(1+\zeta)^2(1-\eta\lambda_i)^2-4\zeta(1-\eta\lambda_i)} ]\\
 \geq& \frac{1}{2} [(1+\zeta)(1+\eta\gamma) +\sqrt{(1+\zeta)^2(1+\eta\gamma)^2-4\zeta(1+\eta\gamma)} ]\\
=& \frac{1}{2} [(2 - \sqrt{\eta\gamma})(1+\eta\gamma)  +\sqrt{(2 - \sqrt{\eta\gamma})^2(1+\eta\gamma)^2-4(1 - \sqrt{\eta\gamma})(1+\eta\gamma)} ]\\
=& \frac{1}{2} [(2 - \sqrt{\eta\gamma})(1+\eta\gamma) +\sqrt{\eta\gamma} \sqrt{(5 - 4 \sqrt{\eta\gamma} + \eta\gamma)(1+\eta\gamma)} ]\\
=& \frac{1}{2} [(2 - \sqrt{\eta\gamma})(1+\eta\gamma) +\sqrt{\eta\gamma} \sqrt{5 - 4 \sqrt{\eta\gamma} + 6\eta\gamma - 4 \sqrt{\eta\gamma}\eta\gamma + \eta^2\gamma^2 } ]\\
=& \frac{1}{2} [(2 - \sqrt{\eta\gamma})(1+\eta\gamma) +\sqrt{\eta\gamma} \sqrt{(2 - 2\sqrt{\eta\gamma} + \eta\gamma)^2 + 1 + 2\sqrt{\eta\gamma}(2-\sqrt{\eta\gamma})} ]\\
\geq & \frac{1}{2} [(2 - \sqrt{\eta\gamma})(1+\eta\gamma) +\sqrt{\eta\gamma} (2 - 2\sqrt{\eta\gamma} + \eta\gamma) ]\\
 = & 1 + \frac{ \sqrt{\eta\gamma}}{2},
\end{align*}
where the first inequality is due to function $h(x) = (1+\zeta)(1-x) +\sqrt{(1+\zeta)^2(1-x)^2-4\zeta(1-x)} $ is a decreasing function in terms of $x < 0$ and $ \eta \lambda_i \leq  -\eta \gamma$. From the above analysis, we can see that when $\zeta=1-\sqrt{\eta\gamma}$, only k eigen-values of A are larger than $1+\sqrt{\eta\gamma}/2$, and others are less than 1 in magnitude. Therefore, the eigen-gap corresponding to the top-k eigen-space is at least $ \sqrt{\eta\gamma}/2$. 
\end{proof}

\end{document}